\documentclass[12pt, a4paper]{article} 

\usepackage{amssymb, amsmath, amsfonts, amsthm}
\usepackage[T1, T2A]{fontenc}
\usepackage[utf8]{inputenc}
\usepackage[polish, english]{babel} 
\usepackage{fancyhdr}
\usepackage{latexsym}
\usepackage{mathtools, enumerate, rotating, framed, lipsum}
\usepackage{graphicx}
\usepackage[pass]{geometry}
\usepackage{wrapfig}
\usepackage{rotating}
\usepackage{textcomp}
\usepackage{arcs}
\usepackage{gensymb}
\usepackage{multicol}
\usepackage{titlesec}
\usepackage{chngcntr}
\usepackage{xcolor}
\usepackage{float}
\usepackage{tikz}
\usepackage{courier}
\usepackage{thmtools}
\usepackage{hyperref}
\usepackage{yfonts}
\usepackage{enumitem}
\usepackage{parskip}
\usepackage{etoolbox}
\usepackage{setspace}
\usepackage{pdfpages}
\usepackage{standalone}
\usepackage{svg}
\usepackage{csquotes}
\usepackage[shortcuts]{extdash}

\hypersetup{hidelinks}

\usetikzlibrary{decorations.markings}

\addtolength{\textwidth}{3cm}
\addtolength{\oddsidemargin}{-1.5cm}
\addtolength{\evensidemargin}{-1.5cm}
\addtolength{\topmargin}{-1cm}
\addtolength{\textheight}{2.5cm}


\newcommand{\CC}{\mathbb{C}} \newcommand{\RR}{\mathbb{R}} 
\newcommand{\NN}{\mathbb{N}} \newcommand{\ZZ}{\mathbb{Z}}

\newcommand{\abs}[1]{\left| #1 \right|}

\newcommand{\set}[1]{\left\{ #1 \right\}}

\DeclareMathOperator{\Cay}{Cay}

\DeclareMathOperator{\Lk}{Lk}
\DeclareMathOperator{\Res}{Res}

\makeatletter
\def\th@plain{%
  \thm@notefont{}
  \itshape 
}
\def\th@definition{%
  \thm@notefont{}
  \normalfont 
}
\makeatother

\newtheoremstyle{break}
  {\topsep}{\topsep}%
  {\itshape}{}%
  {\bfseries}{}%
  {\newline}{\thmname{#1}\thmnumber{ #2}\thmnote{ (#3)}}%
\theoremstyle{break}
\newtheorem{thm}{Theorem}[section]
\newtheorem{lm}[thm]{Lemma}
\newtheorem{prop}[thm]{Proposition}
\newtheorem{fact}[thm]{Fact}

\newtheorem{oprob}[thm]{Open Problem}
\newtheorem{conj}[thm]{Conjecture}

\newtheoremstyle{defbreak}
  {\topsep}{\topsep}%
  {\upshape}{}%
  {\bfseries}{}%
  {\newline}{\thmname{#1}\thmnumber{ #2}\thmnote{ (#3)}}%
\theoremstyle{defbreak}
\newtheorem{df}[thm]{Definition}
\newtheorem{exam}[thm]{Example}
\newtheorem{rem}[thm]{Remark}

\newcommand{\ol}[1]{\overline{#1}}
\newcommand{\wtl}[1]{\widetilde{#1}}

\newcounter{picturecounter}
\addtocounter{picturecounter}{1}

\newcommand{\com}[1]{}

\title{On the biautomaticity of CAT(0) triangle-square groups} 
\author{Mateusz Kandybo}

\date{November 2024}
\begin{document}

\maketitle
\begin{abstract}
    Following the research from the paper ``Triangles, squares and geodesics'' of Rena Levitt and Jon McCammond we investigate the properties of groups acting on CAT(0) triangle-square complexes, focusing mostly on biautomaticity of such groups. In particular we show two examples of nonpositively curved triangle-square complexes $X_1$ and $X_2$, such that their universal covers violate conjectures given in the aforementioned paper. This shows that the Gersten-Short geodesics cannot be used as a way of proving biautomaticity of groups acting on such complexes. Lastly we give a proof of biautomaticity of $\pi_1(X_1)$, however the biautomaticity of $\pi_1(X_2)$ remains unknown.
\end{abstract}

\section{Introduction}

Groups acting on nonpositively curved spaces are one of the leading research topics in modern geometric group theory. Out of many types of such spaces two families are particularly intriguing: the CAT(0) cubical complexes, introduced by M. Gromov in \cite{gromov_hyperbolic}, and systolic spaces, developed independently by many authors (\cite{bridged_original}, \cite{bridged_second}, \cite{haglund2003}, \cite{systolic}). Both of those of classes of spaces admit a number of geometric and combinatorial properties which makes them a noteworthy subject of research. 

On the other hand, two important properties of groups, established by several authors in \cite{word_processing}, are automaticity and biautomaticity. It was shown that several natural classes of groups acting on nonpositively curved spaces, including so-called word hyperbolic groups, are either automatic or biautomatic. The analysis of properties of groups acting geometrically and cellularly on CAT(0) cubical complexes (conducted by G. A. Niblo and L. Reeves \cite{niblo_reeves}) and systolic spaces (conducted by T. Januszkiewicz and J. Świątkowski in \cite{systolic}) reveal that all such groups are biautomatic as well. That raises the question: are there any generalizations of both of those types of spaces such that groups acting on them are also biautomatic?

The foregoing question is the main point of a research done by R. Levitt and J. McCammond in their paper \cite{triangles_squares}. The authors attempted to prove biautomaticity for groups acting geometrically and cellularly on CAT(0) triangle-square complexes. This topic is particularly interesting as the existence of not biautomatic $2$-dimensional CAT(0) groups is a long standing open problem, stated as question $43$ in \cite{farb}. In $3$ dimensional case analogous groups do exist, which was shown in \cite{leary} and \cite{hughes-valiunas}. 

The results given in this paper revolve mostly around investigating whether so-called Gersten-Short geodesics are fellow-traveling inside the triangle-square complexes. Authors has shown that such geodesics fellow travel if a triangle-square complex is $\delta$-hyperbolic, however in case a complex contains isometrically embedded flats this property might not be true. In particular they formulate two conjectures regarding existence of so-called radial and thoroughly crumpled flats inside the universal cover of any compact locally CAT(0) complex $X$.

The main result of this paper is showing direct counterexamples to conjectures of Levitt and McCammond.

\textbf{Main Theorem}\\
\textit{There exist a compact locally CAT(0) triangle-square complex $X$ having both a radial flat and an infinite family of distinct thoroughly crumpled flats embedded into its universal cover.}

We give two examples of spaces, each of them violating both of aforementioned conjectures. The first one of them is homeomorphic to a locally CAT(0) simplicial complex, which is enough to show that its fundamental group is biautomatic, however the biautomaticity of the other one is unclear. We also show that the Gersten-Short geodesics do not fellow travel in radial flats embedded into the universal covers, which rule out possibility of using such geodesics for proving biautomaticity in general case.

This paper is split into $6$ sections. In section \ref{preliminaries} the essential definitions, notions and lemmas are stated. The proofs in this section are usually omitted and replaced with references to the relevant sources in literature. In section \ref{flats_in_complexes} an important lemma about embedding flats is proven. Furthermore, in that section definitions of flats that will be relevant for proving that our examples violates the conjectures of Levitt and McCammond are stated. The point of the next two sections -- \ref{first_example} and \ref{second_example} -- is presenting our counterexamples together with proofs, pictures and important remarks. In section \ref{final remarks}, conclusions are drawn from proven results and new conjectures for groups acting on triangle-square complexes are stated. Moreover, in that section some final thoughts are shared. 

\textbf{Acknowledgement}\\
This paper arose as a natural continuation of the author's master's thesis. The author would like to thank his supervisor, Motiejus Valiunas, for his ideas, suggestions and help with the general shape of this paper. The author is also grateful to Jacek Świątkowski and Damian Osajda for insight regarding some literature and proofs. The work presented here was partially supported by the Carlsberg Foundation, grant CF23-1226.
 
\section{Preliminaries}\label{preliminaries}

\subsection{Geometry of Complexes}

In this subsection we give definitions of polyhedral complexes. The main sources of those definitions are a paper \cite{triangles_squares} and a book \cite{bh}, however other sources were also used (\cite{systolic}, \cite{spanier1989algebraic}).

One of the main objects discussed in this paper are various two dimensional Eulidean polyhedral complexes. Below we give some basic terminology and a definition of such complexes. Note that our definition of polyhedral complexes differ slightly from the standard one presented in \cite{bh}, as we allow two faces of the same cell to be identified. The usual polyhedral complexes can be obtained from polyhedral complexes in the sense of this paper, for example by baricentrically subdividing each cell, hence most of the facts presented in the book of M. Bridson and A. Haefliger can still be applied in our setting. For more details see chapter I.7 of \cite{bh} and section 2 of \cite{triangles_squares}.

\begin{df}[polyhedral cell]
    A convex hull of any finite number of points in Euclidean space $\RR^n$ is called an (Euclidean) \textit{polyhedral cell}. A \textit{dimension} of a cell $C$ is the dimension of the smallest affine subspace of $\RR^n$ containing $C$. The cells of dimension $0, 1$ and $2$ are respectively called \textit{vertices}, \textit{edges} and \textit{polygons}. If $C$ lies in a closed halfspace bounded by affine plane $H$ and the intersection $C\cap H$ is nonempty then such intersection is called a \textit{face} of $C$. Moreover if a face is not equal to $C$, then it is called a \textit{proper face}. Any terminology described above applies to faces as they are also polyhedral cells.
\end{df}

\begin{df}[polyhedral complex]
    A \textit{polyhedral complex} is a topological space obtained by gluing a family of polyhedral cells via isometric identifications of their faces. For practical reasons we assume that any face of a cell inside a polyhedral complex is a cell itself. It turns out that if the aforementioned family has only finitely many isometry types of cells, then the obtained polyhedral complex is in fact a metric space (see theorem I.7.13 in \cite{bh}). A continuous map $f\colon X\to Y$ between two polyhedral complexes is called a \textit{cellular map} if it is a local isometry on each cell of $X$ and the image of any cell is also a cell. 
\end{df}

\begin{df}[$k$-skeleton]
    A \textit{$k$-skeleton} of a polyhedral complex $X$ is a subcomplex of $X$ containing only all at most $k$ dimensional cells of $X$. The $k$-skeleton of $X$ is denoted by $X^{(k)}$ and the metric on $X^{(k)}$ is obtained as a natural metric on the complex $X^{(k)}$ itself rather than a restriction of metric from $X$.
\end{df}

Below we present an important fact regarding covers of polyhedral complexes. Proof of this fact can be found in chapter 3 of \cite{spanier1989algebraic}.

\begin{rem}[covers]
    One of the important example of cellular maps is the covering map. Let $X$ be a polyhedral complex with finitely many isometry types of cells, let $Y$ be a cover of $X$ together with a covering map $p: X\to Y$. Then $Y$ admits a structure of a polyhedral complex with the same isometry types of cells as $X$ such that the map $p$ is cellular.
\end{rem}

Below we present $3$ examples of polyhedral complexes, that were motivating for research in this paper. The first two of them are described in detail in chapter I.7 of \cite{bh} and the last one is presented in section 2 of \cite{triangles_squares}.

\begin{exam}[cubical complex]
    A \textit{cubical complex} is a polyhedral complex obtained by gluing cubes.
\end{exam}

\begin{exam}[simplicial complex] 
    An \textit{abstract simplicial complex} is defined in an abstract way as a pair of the set $V$ called the set of vertices and the set $\mathcal{S}\subseteq \mathcal{P}(V)$ called the set of simplices satisfying the following properties:
    \begin{itemize}
        \item all sets $\sigma\in \mathcal{S}$ are finite and nonempty;
        \item $\set{v} \in \mathcal{S}$ for every $v\in V$;
        \item if $\sigma\in \mathcal{S}$ and $\emptyset\neq \sigma'\subseteq \sigma$, then $\sigma'\in \mathcal{S}$.
    \end{itemize}
    With each abstract simplicial complex we can associate a polyhedral complex obtained by replacing each of the sets $\sigma\in\mathcal{S}$ by a $\abs{\sigma}-1$ dimensional simplex with edges of length $1$ and gluing each two simplices obtained from $\sigma$ and $\sigma'$ if they have a common face $\sigma\cap \sigma'$ via that common face. Such complex is called a \textit{simplicial complex}.
\end{exam}

\begin{exam}[triangle-square complex]
    A \textit{triangle-square complex} is a polyhedral complex obtained by gluing equilateral triangles with sides of length $1$ and squares with sides of length $1$. Through this paper we will use the names \textit{unit triangle} and \textit{unit square} for describing triangles and squares as above. Moreover we will also use the name \textit{unit hexagon} for describing regular hexagons with all sides of length $1$.
\end{exam}

The following remark gives an example of triangle-square complex, that will be particularly important for the purpose of making constructions later in this paper. This example is also described in section 8 of \cite{triangles_squares}.

\begin{exam}[Eisenstein plane]
    An example of a triangle-square complex is the plane tiled with unit triangles. Note that vertices on such plane can be identified with the Eisenstein integers $\ZZ\left[\frac{-1+\sqrt{3}i}{2}\right]$ thus we call such plane an \textit{Eisenstein plane} and denote it as $\mathcal{E}$. Moreover we also use the identification of this plane with $\CC$ for the purpose of defining objects on $\mathcal{E}$.
\end{exam}

Since the main object of this paper is studying concrete examples of triangle-square complexes then there is demand to create a notations to directly construct and study such complexes. Our approach of displaying triangle-square complexes is presented in the remark below.

\begin{rem}[constructing complexes]\label{practical_gluing}
    To define a two dimensional polyhedral complex we need to define the gluing between the cells. For that purpose we use the notation $e:AB \sim YZ$ meaning that an oriented edge $AB$ of a cell is glued isometrically to the oriented edge $YZ$ of a (possibly the same) cell in a way that preserve orientation and the edge in a complex obtained in that way is labeled as $e$. In the illustrations of the polyhedral cells we denote the fact that two oriented edges are glued together by labeling them in the same way. Moreover we also use colors and different types arrows to further help distinguish different edges in a final complex.
    
    An example of a complex built from a unit square $ABCD$ and a rhombus $EFGH$ such that $\sphericalangle EFG = \frac{\pi}{3}$ with sides of unit length with gluing:
    \begin{align*}
        e:AB \sim DC \sim EF \sim HG, && f:BC \sim DA, && g:GF \sim HE
    \end{align*}  
    is presented in the figure \ref{complex_example}.

    \begin{figure}[!ht]
    \begin{center}
\begin{tikzpicture}[scale=0.9]
\fill[fill=yellow!20] (-1,1)--(1,1)--(1,3)--(-1,3);
\draw[black!50!green, thick, ->] (-1,1)--(0,1);
\draw[black!50!green, thick] (1,1)--(0,1);
\filldraw[black!50!green] (0,1) circle (0pt) node[anchor=north]{$e$};
\draw[black!50!green, thick, ->] (-1,3)--(0,3);
\draw[black!50!green, thick] (1,3)--(0,3);
\filldraw[black!50!green] (0,3) circle (0pt) node[anchor=south]{$e$};
\draw[blue, thick, ->] (-1,3)--(-1,2);
\draw[blue, thick] (-1,2)--(-1,1);
\filldraw[blue] (-1,2) circle (0pt) node[anchor=east]{$f$};
\draw[blue, thick] (1,3)--(1,2);
\draw[blue, thick, ->] (1,1)--(1,2);
\filldraw[blue] (1,2) circle (0pt) node[anchor=west]{$f$};
\filldraw[black] (-1,1) circle (2pt) node[anchor=north east]{$A$};
\filldraw[black] (1,1) circle (2pt) node[anchor=north west]{$B$};
\filldraw[black] (1,3) circle (2pt) node[anchor=south west]{$C$};
\filldraw[black] (-1,3) circle (2pt) node[anchor=south east]{$D$};

\fill[fill=red!15] (0,-3)--(1.732,-2)--(0,-1)--(-1.732,-2);
\draw[black!50!green, thick, ->] (0,-3)--(0.866,-2.5);
\draw[black!50!green, thick] (1.732,-2)--(0.866,-2.5);
\filldraw[black!50!green] (0.866,-2.5) circle (0pt) node[anchor=north west]{$e$};
\draw[blue, thick, ->>] (0,-1)--(0.866,-1.5);
\draw[blue, thick] (1.732,-2)--(0.866,-1.5);
\filldraw[blue] (0.866,-1.5) circle (0pt) node[anchor=south west]{$g$};
\draw[blue, thick, ->>] (-1.732,-2)--(-0.866,-2.5);
\draw[blue, thick] (0,-3)--(-0.866,-2.5);
\filldraw[blue] (-0.866,-2.5) circle (0pt) node[anchor=north east]{$g$};
\draw[black!50!green, thick, ->] (-1.732,-2)--(-0.866,-1.5);
\draw[black!50!green, thick] (0,-1)--(-0.866,-1.5);
\filldraw[black!50!green] (-0.866,-1.5) circle (0pt) node[anchor=south east]{$e$};
\draw[black!50, thick, dashed] (0,-1)--(0,-3);
\filldraw[black] (0,-3) circle (2pt) node[anchor=north]{$E$};
\filldraw[black] (1.732,-2) circle (2pt) node[anchor=west]{$F$};
\filldraw[black] (0,-1) circle (2pt) node[anchor=south]{$G$};
\filldraw[black] (-1.732,-2) circle (2pt) node[anchor=east]{$H$};
\end{tikzpicture}\hspace{5cm}
\begin{tikzpicture}[scale=1.1]
    \draw[black, thick] (-2,0)--(-1,-2.45)--(1,-2.45)--(2,0)--(1,2.45)--(-1,2.45)--(-2,0);
    \draw[red, thick, dashdotted] (-2,0)--(0,-1.225)--(2,0)--(0,1.225)--(-2,0);
    \filldraw[black] (-2,0) circle (1.5pt) node[anchor=east]{$e$};
    \filldraw[black] (-1,-2.45) circle (1.5pt);
    \filldraw[white] (-1,-2.45) circle (1pt);
    \filldraw[black] (1,-2.45) circle (1.5pt) node[anchor=north west]{$\ol{g}$};
    \filldraw[black] (2,0) circle (1.5pt) node[anchor=west]{$\ol{e}$};
    \filldraw[black] (1,2.45) circle (1.5pt);
    \filldraw[white] (1,2.45) circle (1pt);
    \filldraw[black] (-1,2.45) circle (1.5pt) node[anchor=south east]{$g$};
    \filldraw[black] (0,-1.225) circle (1.5pt) node[anchor=north]{$f$};
    \filldraw[black] (0,1.225) circle (1.5pt) node[anchor=south]{$\ol{f}$};
\end{tikzpicture}
\end{center}
    \caption{Left: a triangle-square complex, right: a link of the unique vertex in such complex}
    \label{complex_example}
\end{figure}
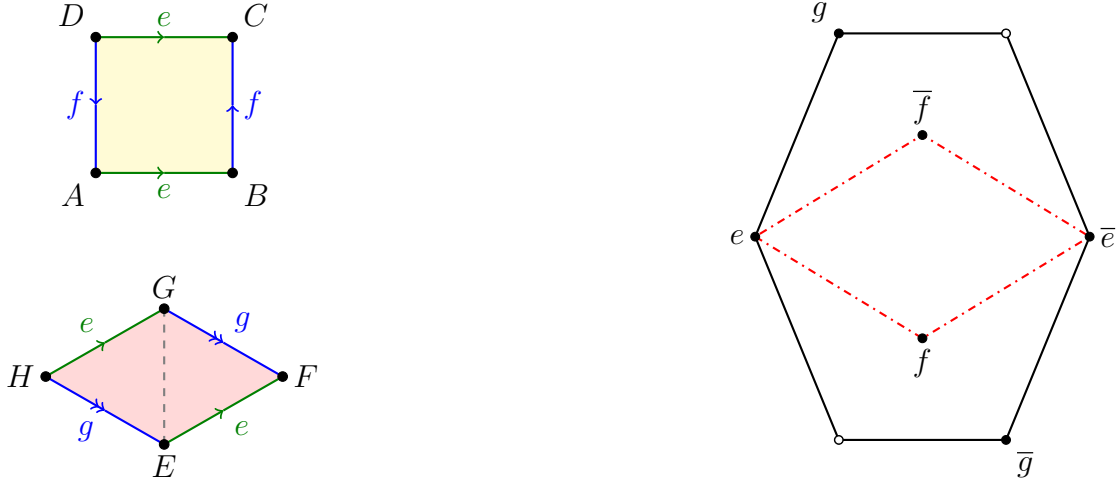
    
    In some cases we also allow the gluing of diagonals of cells instead of only edges. The spaces obtained in such a way are also polyhedral complexes with smaller cells, however it allows us to define such complexes in a more manageable way. Finally, for the purpose of creating a triangle-square complex we can subdivide a cells in our complex into unit triangles and squares. In that way a complex described above is a triangle-square complex because we can divide a rhombus $EFGH$ along the diagonal $EG$.
\end{rem}

In order to work with geometry of complexes we need to analyze the way in which the cells are glued together. A link is a very important and useful tool for analyzing complexes in a combinatorial way. Below we present a preliminary definition of the length of a curve from chapter I.1 of \cite{bh} as well as the definition of a link that is described in more detail in chapter I.7 of \cite{bh} and section 2 of \cite{triangles_squares}.

\begin{df}[length of a curve] 
    Let $X$ be a polyhedral complex. We can define the \textit{length} of any curve $\eta\colon [0,1]\to X$ as
    $$l(\eta) = \lim_{n\to\infty}\sup_{0=t_0\leqslant\ldots\leqslant t_n=1}\sum_{i=0}^{n-1}d(\eta(t_i),\eta(t_{i+1})).$$
\end{df}

\begin{df}[link]\label{def:link}
    Let $v$ be a vertex of $X$. Moreover, let $S(v, \varepsilon)$ be the sphere around $v$ in $X$ of radius $\varepsilon$ equipped with the metric 
    $$d_{v,\varepsilon}(x,x')=\frac{1}{\varepsilon}\inf\set{l(\eta)|\eta\colon[0,1]\to S(v,\varepsilon), \eta(0)=x, \eta(1)=x'}.$$
    If $X$ has only finitely many isometry types of cells there is sufficiently small $\varepsilon$ such that for every positive $\varepsilon' \leqslant \varepsilon$ the spaces $S(v,\varepsilon)$ and $S(v, \varepsilon')$ are isometric. We say that $S(v, \varepsilon)$ as above is a \textit{link} of $v$ in $X$ and denote it as $\Lk(v,X)$.
\end{df}

Note that in case of two dimensional complexes links are graphs. The vertices in such graphs correspond to either initial or final parts of oriented edges in complex and edges in such graphs correspond to angles in two dimensional cells. Moreover the length of any edge in a graph is exactly the measure of a corresponding angle in a polygon. That observation makes analyzing links in triangle-square complexes approachable and allows us to establish the following notation.

\begin{rem}[drawing links]
    We use a filled black dot to indicate a vertex in link corresponding to either initial or final part of a labeled oriented edge. If such dot corresponds to an initial part of an edge $e$, then it is itself labeled $e$; if such dot corresponds to a final part of an edge $e$, then it is labeled $\ol{e}$. Moreover we use hollow black dots to indicate a vertex in link corresponding to an unlabeled edge. We also use a solid black edges to indicate an edge of length $\frac{\pi}{3}$ and dash-dotted red edges to indicate an edge of length $\frac{\pi}{2}$. The link of the only vertex of complex described in remark \ref{practical_gluing} is shown in the figure \ref{complex_example}.
\end{rem}

 There is an another, combinatorial approach to defining links in simplicial complexes. We will use this definition while defining systolic complexes. For more detail and context regarding combinatorial links see chapter I.7 of \cite{bh} and section 1 of \cite{systolic}.

\begin{df}[combinatorial link]\label{comb_lk}
   We can see the link of a vertex $v$ inside simplicial complex $X$ as a subcomplex with simplices $\tau$ such that $\set{v} \cap \tau = \emptyset$ and $\tau\cup \set{v}$ is a simplex in $X$. Such subcomplex is homeomorphic to the standard link defined in the definition \ref{def:link}. In particular if we set the length of every edge to be $\frac{\pi}{3}$ in a link of a vertex in $2$ dimensional complex, then such complex will be isometric to the standard link. This also allows us to generalize the definition of the link for any simplex $\sigma$: the \textit{link} of $\sigma$ in $X$ is the subcomplex consisting of simplices $\tau$ such that $\sigma \cap \tau = \emptyset$ and $\tau\cup \sigma$ is a simplex in $X$. We denote links in such abstract sense as $X_\sigma$.    
\end{df}

The next two theorems provide a valuable methods for investigating polyhedral complexes. First of them, the Gromov link condition, is stated in two dimensional case as the complexes of our interest are two dimensional. The latter one however is stated in full generality. Those theorems are described in detail and proven in chapters II.4 and II.5 of \cite{bh}.

\begin{thm}[Gromov link condition]\label{GLC}
    Let $X$ be a polyhedral complex in which each cell is at most $2$-dimensional and with finitely many isometry types of cells. $X$ is locally CAT(0) if and only if for each vertex $v\in X$ every injective loop in $\Lk(v, X)$ has a length at least $2\pi$.
\end{thm}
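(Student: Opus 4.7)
The plan is to deduce this as a two-dimensional specialisation of the general form of the Gromov link condition (Theorem II.5.24 of Bridson--Haefliger \cite{bh}): a polyhedral complex $X$ with finitely many isometry types of cells is locally CAT(0) if and only if the link $\Lk(v,X)$ is a CAT(1) space for every vertex $v$. Once this is invoked, the theorem reduces to a purely graph-theoretic statement. Indeed, when each cell of $X$ is at most two-dimensional, every closed cell containing a vertex $v$ is a vertex, edge, or polygon, so $\Lk(v,X)$ is a one-dimensional polyhedral complex, i.e.\ a metric graph whose edges carry lengths equal to the interior angles of the polygons at $v$. Hence it suffices to establish the following characterisation: a metric graph $\Gamma$ with finitely many isometry types of cells is CAT(1) if and only if every injective loop in $\Gamma$ has length at least $2\pi$.

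I would handle the easy direction first. Suppose $\Gamma$ is CAT(1) and $\gamma\subset\Gamma$ is an injective loop of length $L<2\pi$. Pick two antipodal points $p,q\in\gamma$; they split $\gamma$ into two arcs $\alpha_1,\alpha_2$, each of length $L/2<\pi$. Since $\gamma$ is embedded in a one-dimensional complex, each $\alpha_i$ is a local geodesic in $\Gamma$, hence a geodesic of length $L/2<\pi$ between $p$ and $q$. But in a CAT(1) space any two points at distance less than $\pi$ are joined by a unique geodesic, so $\alpha_1=\alpha_2$, contradicting the injectivity of $\gamma$.

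For the converse the strategy is to invoke a Cartan--Hadamard-type statement for CAT(1) spaces (compare II.4.1 in \cite{bh} and the developing-map argument underlying the link condition): a complete, locally CAT(1) space in which every closed geodesic has length at least $2\pi$ is globally CAT(1). A metric graph is locally a tree and hence locally CAT($1$) automatically. Furthermore the hypothesis \enquote{every injective loop has length $\geq 2\pi$} translates to \enquote{every closed geodesic has length $\geq 2\pi$}: a closed geodesic in a graph is an edge-path without corners at vertices, and any self-intersection of such a path can be removed to yield a shorter injective closed edge-path, which must then have length $\geq 2\pi$ by assumption.

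The main obstacle is the verification of this Cartan--Hadamard criterion: developing small geodesic triangles onto the model sphere $\mathbb{S}^2$ and using the absence of short closed geodesics to extend the comparison inequalities globally. This is classical and fully developed in chapter II.5 of \cite{bh}, so in the write-up I would simply cite Theorem II.5.24 and the two-dimensional simplification that follows it, which packages precisely the statement to be proven.
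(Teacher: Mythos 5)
The paper offers no proof of this statement at all---it is imported directly from Bridson--Haefliger, chapters II.4--II.5, and your proposal follows exactly that route (the general link condition reducing local CAT(0)-ness to links being CAT(1), plus the characterisation of CAT(1) metric graphs), so it is essentially the same approach, only more detailed than the paper's bare citation. The argument is correct: the only delicate point, the globalisation step for graphs, is precisely what Theorem II.5.4 of \cite{bh} (a polyhedral complex with finitely many shapes is CAT(1) iff it is locally CAT(1) and contains no isometrically embedded circle of length $<2\pi$) supplies, and your passage between injective loops and short closed geodesics is sound.
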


\begin{thm}[Cartan-Hadamard theorem]\label{CH} 
    Let $X$ be a complete connected metric space. If $X$ is locally CAT(0) then the universal cover $\wtl{X}$ is CAT(0).
\end{thm}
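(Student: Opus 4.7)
The plan is to lift the local CAT(0) structure from $X$ to its universal cover and then promote local information to global information using simple connectedness.

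First I would observe that the covering projection $p\colon \wtl{X}\to X$ becomes a local isometry once $\wtl{X}$ is equipped with the pullback length metric. Since $X$ is locally CAT(0), for every $\wtl{x}\in \wtl{X}$ there is a radius $r>0$ such that $p$ restricts to an isometry from $B(\wtl{x},r)\subseteq \wtl{X}$ onto a CAT(0) ball in $X$. Hence $\wtl{X}$ is itself a locally CAT(0) length space, and completeness of $X$ passes to $\wtl{X}$. Combining local CAT(0) with completeness, the length-space version of the Hopf--Rinow theorem furnishes a length-minimizing curve between any two points, so $\wtl{X}$ is a geodesic space.

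The main obstacle is upgrading the local CAT(0) inequality to a global one. The first step will be to establish uniqueness of local geodesics in $\wtl{X}$. Given two local geodesics $\gamma_0,\gamma_1\colon[0,1]\to \wtl{X}$ with common endpoints, simple connectedness supplies a homotopy $H\colon [0,1]^2 \to \wtl{X}$ between them. By compactness, $H([0,1]^2)$ is covered by finitely many balls on which $p$ restricts to an isometry onto a CAT(0) ball; taking a Lebesgue number, one interpolates a finite chain of local geodesics $\gamma_0=\eta_0,\eta_1,\ldots,\eta_N=\gamma_1$ in which consecutive members stay close on small arcs where the local uniqueness of geodesics inside CAT(0) balls forces them to agree. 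Induction across the chain then yields $\gamma_0=\gamma_1$.

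Finally, for the global CAT(0) comparison I would apply Alexandrov's patchwork lemma. Given a geodesic triangle $\Delta$ in $\wtl{X}$, I would subdivide it into sub-triangles small enough that each lies inside a ball isometric to a CAT(0) ball in $X$, where the CAT(0) comparison against the Euclidean model triangle holds. Alexandrov's lemma ensures that the union of two triangles sharing an edge, each satisfying CAT(0) comparison, still satisfies CAT(0) comparison against its Euclidean model. Iterating the gluing across the subdivision assembles the inequality for $\Delta$ itself, and combined with the uniqueness of geodesics obtained above this yields that $\wtl{X}$ is (uniquely geodesic and) globally CAT(0).
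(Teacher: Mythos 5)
The paper does not actually prove this statement; it is quoted from \cite{bh} (chapters II.4--II.5), and your outline follows the same overall strategy as the proof given there: pull the local CAT(0) structure back to $\wtl{X}$, use simple connectedness to force uniqueness of local geodesics, and globalize the comparison inequality by Alexandrov's patchwork. The architecture is therefore right, but two of your steps have genuine gaps.

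First, the Hopf--Rinow theorem for length spaces produces geodesics only under the additional hypothesis of local compactness, which is not among the hypotheses: a complete, connected, locally CAT(0) space need not be locally compact, so ``complete $+$ length space $\Rightarrow$ geodesic'' is not available here. Bridson and Haefliger deliberately avoid Hopf--Rinow; existence of geodesics in $\wtl{X}$ falls out of the same machinery that gives uniqueness. Second, and more seriously, your interpolation of ``a finite chain of local geodesics'' between $\gamma_0$ and $\gamma_1$ is asserted rather than constructed: the intermediate curves $H(s,\cdot)$ of the homotopy are arbitrary paths, not local geodesics, and replacing each of them by a local geodesic in its homotopy class presupposes exactly the existence-and-uniqueness statement being proved. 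Breaking this circularity is the technical core of the standard argument, which builds the space of local geodesics issuing from a basepoint, shows that the endpoint (``exponential'') map is a covering of $X$, and then invokes homotopy lifting. Relatedly, Alexandrov's patchwork needs more than existence and uniqueness of geodesics: to subdivide a large triangle $\Delta$ into small triangles lying in CAT(0) balls one needs a family of geodesics from a vertex to the opposite side varying \emph{continuously} with the endpoint, and continuity does not follow from uniqueness alone in the absence of local compactness. For the triangle-square complexes of this paper these issues are repairable (the relevant spaces are proper), but as a proof of the theorem as stated the argument is incomplete.
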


The CAT(0) spaces are an important example of nonpositively curved spaces. The theory of those spaces is well-developed and we know a lot of properties of such spaces. The following two properties, proof of which be found in chapter II.1 of \cite{bh}, will be important for this paper. 

\begin{fact}\label{simply_connected+local_geodesics}
    All CAT(0) spaces are connected and simply connected. Moreover local geodesics in CAT(0) spaces are geodesics.
\end{fact}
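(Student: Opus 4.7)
The plan is to extract all three assertions from the defining CAT(0) comparison inequality for geodesic triangles. Connectedness is immediate, because a CAT(0) space is by definition a geodesic metric space, so any two points are joined by a path. The intermediate fact I would establish first is uniqueness of geodesics: given two geodesics $c_0, c_1 \colon [0, d(p,q)] \to X$ from $p$ to $q$, form the degenerate geodesic triangle with vertices $p, q, q$, using $c_0$ and $c_1$ on two of its sides and a constant path on the third. Its comparison triangle in $\RR^2$ is a segment, so the CAT(0) inequality forces $d(c_0(t), c_1(t)) \leqslant 0$ for every $t$, hence $c_0 \equiv c_1$.

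From uniqueness I would build a contraction. Fix $x_0 \in X$ and, for each $x$, let $\gamma_x \colon [0,1] \to X$ be the unique geodesic from $x_0$ to $x$ reparameterised to the unit interval. Set $H(x, s) = \gamma_x(s)$. To see that $H$ is continuous, take $x, y \in X$ and compare $(x_0, x, y)$ with its Euclidean comparison triangle $(\bar{x}_0, \bar{x}, \bar{y})$: the CAT(0) inequality gives
\[
  d(\gamma_x(s), \gamma_y(s)) \leqslant \abs{\bar\gamma_x(s) - \bar\gamma_y(s)},
\]
and the right-hand side is estimated by elementary Euclidean geometry in terms of $d(x, y)$ (it is in fact bounded linearly). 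Hence $H$ is a strong deformation retraction of $X$ onto $\set{x_0}$, which yields both path-connectedness and simple connectedness (in fact contractibility).

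For the third claim, let $c \colon [0, L] \to X$ be a local geodesic and put
\[
  T = \sup\set{t \in [0, L] : c|_{[0,t]} \text{ is a geodesic}}.
\]
Local geodesy gives $T > 0$, and the defining set is closed by continuity of $d$, so the supremum is attained. Assume for contradiction $T < L$ and choose $\varepsilon > 0$ small enough that $c|_{[T-\varepsilon, T+\varepsilon]}$ is a geodesic. Consider the geodesic triangle $\Delta$ with vertices $p = c(0)$, $q = c(T)$, $r = c(T+\varepsilon)$, with $pq$ and $qr$ being the relevant subpaths of $c$ and $pr$ the unique geodesic joining $p$ to $r$. Let $\bar\Delta \subset \RR^2$ be its comparison triangle. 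Comparing the point $q$ with its analogue $\bar q$ on the side $\bar p \bar r$ extended, the CAT(0) inequality together with the fact that $c$ is a local geodesic at $q$ forces the Alexandrov angle at $q$ to equal $\pi$, so $\bar\Delta$ is degenerate and $pr = pq \cup qr$ as a geodesic. Hence $c|_{[0, T+\varepsilon]}$ is a geodesic, contradicting the definition of $T$.

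The main obstacle will be the last paragraph: the apparently innocent sentence ``the CAT(0) inequality together with local geodesy forces the comparison angle at $q$ to be $\pi$'' hides a short but careful argument about Alexandrov angles and the monotonicity of comparison angles. The cleanest way to package it is to invoke the standard first-variation/extension lemma from chapter II.1 of \cite{bh}, which is precisely designed to feed this contradiction.
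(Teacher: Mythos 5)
The paper gives no proof of this fact at all, deferring entirely to chapter II.1 of \cite{bh}; your argument is precisely the standard one found there (connectedness from the definition of a geodesic space, uniqueness of geodesics via a degenerate comparison triangle, contractibility via the geodesic retraction $H(x,s)=\gamma_x(s)$ whose continuity follows from the comparison estimate $d(\gamma_x(s),\gamma_y(s))\leqslant s\,d(x,y)$, and the extension of the maximal geodesic subsegment of a local geodesic by an angle argument), and it is correct. The only delicate step is the one you flag yourself -- that the Alexandrov angle at an interior point of a geodesic equals $\pi$ while the CAT(0) condition bounds the Alexandrov angle above by the comparison angle, forcing the comparison triangle at $c(T)$ to degenerate -- and pointing to the relevant statements in II.1 of \cite{bh} there is exactly the right way to close it.
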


The definition below describes what is considered a ``nice'' group action in geometric group theory. Some properties of that action are described in section III.$\Gamma$ of \cite{bh}, however such action isn't named in that book.

\begin{df}[geometric action] 
    We say that $G$ acts on $X$ \textit{by isometries} if for any $g\in G$ the map $x \mapsto g\cdot x$ is a isometry. Moreover we say that the action of $G$ on $X$ is \textit{proper} if for every compact $K \subseteq X$ the set $\set{g\in G | g\cdot K\cap K \neq \emptyset}$ is finite. Lastly we say that the action of $G$ on $X$ is \textit{cocompact} if the space $X/G$ is compact. If $G$ acts on $X$ by isometries, properly and cocompactly, then we say that such action is \textit{geometric}. 
\end{df}

\begin{rem}
    Many classical examples of group actions turns out to be geometric. One of such examples is an action of a finitely generated group on its Cayley graph, presented in the definition \ref{Cayley graph} (see I.8.11 in \cite{bh}). Another one is an action of a fundamental group of a compact finite-dimensional polyhedral complex on its universal cover via deck transformations (see ``other geometric models of groups'', chapter 7 in \cite{dructu2018geometric}). The latter one is particularly important for research in this paper as we will construct our counterexamples by taking a fundamental group of a triangle-square complex acting on its universal cover.
\end{rem}

The systolic complexes were discovered independently by several researchers (\cite{haglund2003}, \cite{systolic} as a way of defining nonpositive curvature for simplicial complexes and \cite{bridged_original}, \cite{bridged_second} in graph theoretical setting under the name of bridged graphs). Below we present the definition of systolic complex stated in \cite{systolic}. 

\begin{df}[systolicity]
    A \textit{cycle} in a simplicial complex $X$ is a subcomplex of $X$ isomorphic to a triangulation of $S^1$. A subcomplex $X'$ of $X$ is \textit{full} if for every collection of vertices in $X'$  which is contained in a common simplex in $X$ is also contained in a common simplex in $X'$. A \textit{residue} of a simplex $\sigma$ in $X$ is the smallest subcomplex of $X$ containing all simplices $\tau$ such that $\sigma\subseteq \tau$. We denote the residue of $\sigma$ in $X$ as $\Res(\sigma,X)$. A \textit{systole} of a complex $X$, denoted $sys(X)$, is the minimal length of a full cycle in $X$. Given a natural number $k\geqslant 4$ we say that a simplicial complex is:
    \begin{itemize}
        \item $k$\textit{-large} if $sys(X) \geqslant k$ and $sys(X_\sigma) \geqslant k$ for each simplex $\sigma$ of $X$;
        \item \textit{locally $k$-large} if residue of every simplex of $X$ is $k$-large;
        \item \textit{$k$-systolic} if it is connected, simply connected and locally $k$-large.
    \end{itemize}
    If a complex is $6$-systolic we call it \textit{systolic}.
\end{df}

Below we present a fact from the paper \cite{systolic}. For the sake of completeness we also provide a proof of this fact as it was omitted in the original source.

\begin{fact}\label{2-dim-systolic} 
    Let $X$ be a simplicial complex with every cell having the dimension at most $2$. Then the following conditions are equivalent:
    \begin{itemize}
        \item[(i)] $X$ is systolic;
        \item[(ii)] $X$ is CAT(0).
    \end{itemize}
\end{fact}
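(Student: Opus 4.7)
The plan is to combine the Gromov link condition (Theorem~\ref{GLC}), the Cartan--Hadamard theorem (Theorem~\ref{CH}) and Fact~\ref{simply_connected+local_geodesics}: together these give the classical statement that a two-dimensional polyhedral complex with finitely many isometry types of cells is CAT(0) if and only if it is simply connected and satisfies the Gromov link condition at every vertex. Since being systolic already incorporates connectedness and simple connectedness, the whole equivalence $(i)\Leftrightarrow(ii)$ reduces to showing that, for a two-dimensional simplicial complex $X$, the combinatorial condition ``locally $6$-large'' is equivalent to the Gromov link condition at every vertex.

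First I would translate the Gromov link condition into a graph girth statement. Since every triangle of $X$ is equilateral, each edge of $\Lk(v,X)$ has length $\pi/3$, so an injective loop of $k$ edges has length $k\pi/3$; the Gromov condition at $v$ therefore becomes ``every embedded cycle in $\Lk(v,X)$ has at least $6$ edges''. A shortest cycle in any graph carries no chord and is hence full, so the girth equals the length of a shortest full cycle; this makes the Gromov condition at $v$ equivalent to $sys(\Lk(v,X))\geqslant 6$.

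Next I would check local $6$-largeness case by case on the dimension of $\sigma$. For a $2$-simplex $\sigma$ one has $\Res(\sigma,X)=\sigma$ and the condition is vacuous. For an edge $\sigma=uv$, the residue is the union of $\sigma$ with the triangles $uvw_i$ containing it; its only cycles are the filled triangles $uvw_i$ and the quadrilaterals with vertex sequence $u,w_i,v,w_j,u$ (all of which carry $uv$ as a chord), so the residue contains no full cycle and all of its further links are discrete or empty. For a vertex $\sigma=v$, every vertex of $\Res(v,X)$ different from $v$ is joined to $v$ by an edge; hence any cycle through $v$ of length at least $4$ has an intermediate-vertex chord $vw_i$, while the only length-$3$ cycle through $v$ bounds a $2$-simplex, so no cycle through $v$ is full. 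A cycle in $\Res(v,X)$ avoiding $v$ is a cycle in $\Lk(v,X)$, and because every $2$-simplex of $\Res(v,X)$ uses $v$, its induced subcomplex inside the residue coincides with its induced subgraph inside the link: fullness in $\Res(v,X)$ matches fullness in $\Lk(v,X)$. This identifies $sys(\Res(v,X))$ with $sys(\Lk(v,X))$, and the remaining link conditions $sys(\Res(v,X)_\tau)\geqslant 6$ either reduce to the same statement (when $\tau=v$) or are trivially vacuous.

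Putting the three cases together, ``locally $6$-large'' is equivalent to $sys(\Lk(v,X))\geqslant 6$ for every vertex $v$, that is, to the Gromov link condition, and the equivalence $(i)\Leftrightarrow(ii)$ then drops out from Theorems~\ref{GLC} and~\ref{CH} together with Fact~\ref{simply_connected+local_geodesics}. The main obstacle is the cycle analysis inside $\Res(v,X)$: one must rule out full cycles through $v$ by exhibiting the automatic $v$-chords and simultaneously match the remaining full cycles with induced cycles of $\Lk(v,X)$, a step that relies crucially on the absence in $\Res(v,X)$ of any $2$-simplex missing $v$.
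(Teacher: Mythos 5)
Your proposal is correct and follows essentially the same route as the paper: reduce via Theorems \ref{GLC}, \ref{CH} and Fact \ref{simply_connected+local_geodesics} to the equivalence of local $6$-largeness with the link condition, identify the metric link with the combinatorial link $X_\sigma=\Res(\sigma,X)_\sigma$, and dispose of the remaining residue conditions by the same case analysis on $\dim\sigma$. Your explicit observation that a shortest cycle is chordless (hence full), so that girth equals the length of a shortest full cycle, is a small but welcome clarification that the paper leaves implicit.
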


\begin{proof}
    Firstly, we note that systolic spaces are connected and simply connected by the definition and CAT(0) spaces are connected and simply connected by the fact \ref{simply_connected+local_geodesics}. Therefore by using the Cartan-Hadamard theorem it is enough to check that $X$ is locally $6$-large if and only if $X$ is locally CAT(0).

    $\implies$) Let $X$ be a $2$-dimensional locally $6$-large simplicial complex. By the Gromov link condition (theorem \ref{GLC}) it is enough to check that for each vertex $v$ of $X$ the link $\Lk(v,X)$ has no isometric cycles of length less than $2\pi$. By the remark in the definition of combinatorial link (definition \ref{comb_lk}) the link is isometric to an combinatorial link $X_v$ where each edge has length $\frac{\pi}{3}$, so it is enough to show that there are no cycles of length less than $6$ in the combinatorial link $X_v$. However, we have $X_v = \Res(v,X)_v$, so by $6$-largeness of $\Res(v,X)$ we know that $X$ is locally CAT(0).

    $\impliedby$) Let $X$ be a $2$-dimensional locally CAT(0) simplicial complex. We need to show that the residue of any simplex $\sigma$ is locally $6$-large. If $\sigma$ is a $2$-dimensional simplex, then the residue of $\sigma$ is a subcomplex containing only $\sigma$ and thus neither the residue nor any combinatorial link inside the residue have any full cycles inside. If $\sigma$ is a $1$-dimensional simplex, then the residue of $\sigma$ is the smallest subcomplex containing all of the cells containing $\sigma$ so it is isomorphic to some number of triangles with one of their edges identified. It is easy to see that in that case neither the residue nor each of the combinatorial links inside in the residue have any full cycles inside. If $\sigma$ is a $0$-dimensional simplex however the residue $Res(\sigma, X)$ can have some full cycles, but none of those cycles can contain $\sigma$, thus it is enough to check that $Res(\sigma, X)_\sigma$ has no full cycles of length less than $6$. We know however that $Res(\sigma, X)_\sigma = X_\sigma$ for a $0$-dimensional simplex $\sigma$ and by the remark in the definition of the combinatorial link (definition \ref{comb_lk}) it is enough to say that there are no isometric cycles of length less than $2\pi$ in the link $\Lk(v,X)$, which follows from the fact that $X$ is locally CAT(0). Moreover, if we consider any simplex $\tau \neq \sigma$ then the combinatorial link $Res(\sigma, X)_\tau$ is either empty (when $\tau$ is $2$-dimensional), a graph consisting of some number of vertices without any edges (when $\tau$ is $1$-dimensional) or it is a graph consisting of some number of vertices, one of which is $\sigma$ and edges of form $\set{v,\sigma}$ (when $\tau$ is $0$-dimensional). In all of those cases there are no full cycles inside $Res(\sigma, X)_\tau$, which ends the proof.
\end{proof}

\subsection{Biautomaticity}

In this subsection we state the most important definitions and facts in the theory of biautomatic groups. This theory was primarily developed in book \cite{word_processing}, but the methods used in this paper come mostly from newer sources \cite{path_systems} and \cite{triangles_squares}.

Below we present the definition of a path similar to the one from \cite{path_systems}. Note that this definition isn't standard and a path in our sense is called a walk by most authors, however we decided to use it as a way of being consistent with the paper \cite{path_systems}.

\begin{df}[path]
    Let $X$ be a graph (possibly with loops and multiple edges joining the same pair of vertices). We say that a \textit{path} is a finite sequence of oriented edges $e_1,e_2,\ldots,e_n$ in $X$ such that the final vertex of an edge $e_i$ and the initial vertex of an edge $e_{i+1}$ are the same for all $1<i<n$. The \textit{length} of a path $\gamma$, denoted $\text{len}(\gamma)$, is the number of edges in that path. We say that a vertex is an \textit{$i$-th vertex} of a path $e_1,e_2,\ldots,e_n$ if it is an initial vertex of the edge $e_{i+1}$. For convenience we say that the $i$-th vertex of a path of length $n$ where $i \geqslant n$ is the terminal vertex of the edge $e_n$. We denote the $i$-th vertex of a path $\gamma$ as $\gamma^{(v)}_i$. We say a path $\gamma$ of length $n$ \textit{starts} at the vertex $\gamma^{(v)}_0$ and \textit{ends} at the vertex $\gamma^{(v)}_n$. Finally a \textit{path system} in $X$ is any set of paths in $X$.
 \end{df}

Below we present the definition of a nondeterministic finite automaton in a way that is slightly unusual, yet will be easier to generalize into a notion of automaton over $(G,X)$ in later part of the paper. More information about finite automatons can be found in the chapter 2 of the book \cite{hopcroft}. 

\begin{df}[nondeterministic finite automaton] 
    Let $\Sigma$ be a finite set called \textit{alphabet}. Any finite sequence of elements of $\Sigma$ is called a \textit{word} over $\Sigma$ and any set of words is called a \textit{language} over $\Sigma$. A \textit{nondeterministic finite automaton} (\textit{NFA}) $\mathcal{M}$ is a finite directed graph (possibly with loops and multiple edges between the same pair of vertices) with edges labeled by $\Sigma$ and two selected sets of vertices (not necessarily disjoint): $I$ called the \textit{initial states} and $F$ called \textit{final states}. 

    We say that the NFA $\mathcal{M}$ \textit{accepts} word $w$ if there is a path in $\mathcal{M}$ starting at an initial state and ending in final state which is labeled by $w$. The \textit{language of $\mathcal{M}$} is a set of all words accepted by $\mathcal{M}$. Finally a language $L$ is \textit{regular} if there exist a NFA $\mathcal{M}$ such that $L$ is the language of $\mathcal{M}$.
\end{df}

The following definition is a property of paths that is important for describing biautomaticity. We present the standard definition of fellow traveling together with its generalization that is stated in section 1 of \cite{path_systems} and in section 3 of \cite{triangles_squares}.

\begin{df}[fellow traveler property]\label{ftp-def} 
    Let $X$ be a graph. We say that two paths $\gamma$ and $\eta$ are such that \textit{$k$-fellow travel} if the distance between those two paths at any point in time is bounded by $k$ from above, that is $d(\gamma^{(v)}_i,\eta^{(v)}_i) \leqslant k$ for any natural $i\leqslant \max(\text{len}(\gamma),\text{len}(\eta))$. We say that a path system $\mathcal{P}$ satisfies \textit{2-sided synchronous fellow traveler property} if there is a constant $k$ such that any two paths from $\mathcal{P}$ that start and end at distance at most one apart $k$-fellow travel. Through this paper we will abbreviate the name and refer to 2-sided synchronous fellow traveling as fellow traveling.

    The definition above can be generalized to the following condition we call \textit{generalized fellow traveler property}. We say that a path system $\mathcal{P}$ satisfies generalized fellow traveler property if there are constants $C, D$ such that for any $\ell \geqslant 0$ any two paths from $\mathcal{P}$ that start and end at distance at most $\ell$ apart $(C\ell+D)$-fellow travel. This condition will be particularly important for stating the theorem \ref{swiatkowskis_thm}.
\end{df}

Below we present the definition of a Cayley graph, that can be found in section 3 of \cite{triangles_squares}.

\begin{df}[Cayley graph]\label{Cayley graph}
    A \textit{(right) Cayley graph} over $G$ with respect to a generating set $A$ is a oriented, labeled graph with the vertices indexed by an elements of $G$ and oriented edges labeled by $a\in A$ between any pair of vertices of form $(g, g\cdot a)$. We denote such Cayley graph as $\Cay_A(G)$.
\end{df}

The next definition aims to define the main property of interest in this paper -- biautomaticity. This definitions can be found in the section 3 of the paper \cite{triangles_squares}.

\begin{df}[biautomatic group]
    Let $G$ be a group generated by set $S$ closed under inversion. The group is \textit{biautomatic} if there is a regular language $L$ over $S$ such that
    \begin{itemize}
        \item for every $g\in G$ there is a path in the Cayley graph $\Cay_S(G)$ starting in $1$ and ending in $g$ labeled by a word $w\in L$;
        \item set of paths labeled by words from $L$ in Cayley graph $\Cay_S(G)$ satisfies fellow traveler property.
    \end{itemize}
\end{df}

Now, we state a theorem that yield many important examples of biautomatic groups. Biautomaticity of systolic groups is one of the main motivations of the paper \cite{triangles_squares} as well as this paper. This theorem was originally proven in section 13 of \cite{systolic}, however it was later reproved using modern techniques in section 8.2 of \cite{path_systems}.

\begin{thm}[Januszkiewicz, Świątkowski]\label{systolic-biautomatic}
    \textnormal{Systolic groups}, that is, groups acting geometrically and cellularly on systolic complexes are biautomatic.
\end{thm}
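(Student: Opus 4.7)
The plan is to produce a regular language of words in the generating set $S$ whose associated paths in $\Cay_S(G)$ evaluate to every element of $G$ and satisfy fellow traveler property. The natural framework is to first build a well-behaved path system on the $1$-skeleton of the systolic complex $X$ on which $G$ acts geometrically, and then transfer it to $\Cay_S(G)$ via the orbit map of a fixed base vertex.

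First I would fix a base vertex $v_0 \in X^{(0)}$ and, for every other vertex $v$, define a canonical combinatorial path from $v_0$ to $v$ -- a \emph{directed geodesic} in the sense of Januszkiewicz--Świątkowski. These paths are constructed inductively on the combinatorial distance $n = d(v_0, v)$: the predecessor of $v \in S(v_0, n)$ is chosen as a distinguished vertex of the subcomplex $B(v_0, n-1) \cap \Res(v, X)$. The key point, provided by the systolic condition, is that this subcomplex is itself a nonempty simplex of $X$ (the ``projection simplex'' of $v$ onto $B(v_0, n-1)$), so the choice of predecessor is well-defined modulo picking a vertex inside a simplex. To obtain a $G$-equivariant path system I would carry out the choice on a single representative of each $G$-orbit of vertices and then extend by equivariance.

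The main step will be to verify that the resulting path system $\mathcal{P}$ satisfies the generalized fellow traveler property from Definition \ref{ftp-def}. The core lemma is a combinatorial fact about directed geodesics in systolic complexes: if $v, v' \in S(v_0, n)$ lie in a common simplex, then their projection simplices onto $S(v_0, n-1)$ lie in a common simplex as well; this is exactly where $6$-largeness of links is used, through an analysis of the links of simplices on concentric spheres around $v_0$. Iterating this lemma yields uniform $1$-fellow traveling for directed geodesics whose endpoints are adjacent, and a straightforward concatenation argument extends this to the $(C\ell + D)$-bound for endpoints at distance $\ell$. I expect this to be the main obstacle: setting up the projection-simplex machinery cleanly and controlling the interaction of the predecessor choice with the $G$-action requires the full strength of the systolic hypothesis.

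Finally, having $\mathcal{P}$ equivariant and satisfying the generalized fellow traveler property, I would invoke Świątkowski's theorem (Theorem \ref{swiatkowskis_thm}) to conclude that the associated language of edge-labels on $\Cay_S(G)$, obtained by transporting $\mathcal{P}$ through the orbit map $g \mapsto g \cdot v_0$, is regular and exhibits $G$ as biautomatic. The transfer from $X^{(1)}$ to $\Cay_S(G)$ is routine once one fixes a generating set $S$ consisting of elements $g$ with $d_X(v_0, g \cdot v_0) \leqslant 1$, which is possible because the action is cocompact; this is the Milnor--Schwarz type comparison between $G$ and $X^{(1)}$.
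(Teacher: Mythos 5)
This theorem is not proved in the paper at all: it is quoted from the literature, with the proof attributed to Section 13 of Januszkiewicz--\'Swi\k{a}tkowski and to Section 8.2 of the path-systems paper. So the only meaningful comparison is with those sources, and your outline does follow their general route (directed geodesics built from projections onto balls, fellow traveling, then the path-system criterion of Theorem \ref{swiatkowskis_thm}). However, there is a genuine gap in how you close the argument. Theorem \ref{swiatkowskis_thm} takes \emph{regularity} of the path system as a \emph{hypothesis}, together with the generalized fellow traveler property and the orbit-connectivity condition; it does not produce regularity as a conclusion. You write that you would ``invoke \'Swi\k{a}tkowski's theorem \dots to conclude that the associated language \dots is regular,'' which inverts the logic. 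Proving that the directed geodesics form a regular path system --- i.e.\ exhibiting a finite-to-one automaton over $(G,X)$ in the sense of Definition \ref{reg_path_sys} whose recognized paths are exactly the directed geodesics, with states essentially tracking the projection simplices --- is one of the two substantive halves of the known proof, and your proposal omits it entirely. Without it the biautomaticity conclusion does not follow.

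A second, smaller issue: your core lemma and its iteration only control two directed geodesics emanating from the \emph{same} base vertex $v_0$ with nearby targets. The fellow traveler property required by Definition \ref{ftp-def} and by Theorem \ref{swiatkowskis_thm} is two-sided: both the starting points and the endpoints of the two paths may differ. Since a directed geodesic from $v$ to $w$ read backwards is in general not a directed geodesic from $w$ to $v$, the two-sided statement does not follow from the one-sided one by the ``straightforward concatenation argument'' you gesture at; it needs its own argument (and occupies a separate section in the original source). You should either prove the two-sided version directly or explain how to symmetrize the path system while preserving regularity.
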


We will now state the main theorem from the paper \cite{path_systems}. This theorem justifies analyzing of paths in $1$-skeleton of a complex on which $G$ acts geometrically and cellularly as a way of proving biautomaticity. More details about this theorem and the preliminary definition \ref{reg_path_sys} can be found in the aforementioned paper, particularly in sections 1 and 3.

\begin{df}[regular path system]\label{reg_path_sys}
    Let $G$ be a group acting geometrically on a graph $X$. An \textit{automaton over} $(G,X)$ is a directed graph $M$ (possibly infinite, with loops and multiple edges joining the same two points) together with two selected sets of vertices $I$ called the \textit{initial states} and $F$ called the \textit{final states}, a graph homomorphism $m: M \to X$ and a action of $G$ on $M$ by automorphisms respecting orientation of edges satisfying the following properties:
    \begin{itemize}
        \item[1)] the sets $I$ and $F$ are $G$-invariant;
        \item[2)] the map $m$ is $G$-equivariant, that is $g\cdot m(v) = m(g\cdot v)$ for any $v$ vertex of $M$ and any $g\in G$.
    \end{itemize}
    A path in $M$ is called \textit{internal} if it starts in one of the initial states and ends in the final states. The set $\mathcal{P}$ of images of internal paths in $M$ through the map $m$ is called the \textit{set of recognized paths}. An automaton over $(G,X)$ is called \textit{finite-to-one} if the preimage of each vertex $v$ from $X$ by the map $m$ is finite. A path system $\mathcal{P}$ in $X$ is called \textit{regular} if there exists a finite-to-one automaton over $(G,X)$ for which $\mathcal{P}$ is the set of recognized paths.
\end{df}

\begin{thm}[Świątkowski]\label{swiatkowskis_thm}
    Let $G$ be a group acting geometrically on a connected graph $X$. Moreover let $\mathcal{P}$ be a $G$-invariant path system in $X$ such that:
    \begin{itemize}
        \item $\mathcal{P}$ is regular;
        \item there is a vertex $v_0$ in $X$ such that each path in $\mathcal{P}$ starts and ends at vertices from the orbit $G\cdot v_0$ and for any two vertices from that orbit there is a path in $\mathcal{P}$ starting in one of those vertices and ending in the other one;
        \item $\mathcal{P}$ satisfies the generalized fellow traveler property.
    \end{itemize}
    Then $G$ is biautomatic.
\end{thm}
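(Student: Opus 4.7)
The strategy is to push the automaton $M$ down to the finite quotient $M/G$, read off words in a finite generating set of $G$ for each recognized path, and verify the three conditions for biautomaticity.

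For the generating set, cocompactness of the action on $X$ provides $R\geqslant 0$ such that every vertex of $X$ lies within distance $R$ of $V_0:=G\cdot v_0$. Setting
\[S:=\set{g\in G\setminus\set{1}:d_X(v_0,g\cdot v_0)\leqslant 2R+1},\]
properness makes $S$ finite, it is symmetric by construction, and it generates $G$ by a standard Milnor--Svarc-style argument. Next fix a $G$-equivariant nearest-point map $\pi\colon X\to V_0$. Given an internal path $\gamma^M$ in $M$ whose image $\gamma$ in $X$ runs from $v_0$ to $g\cdot v_0$, define $h_i\in G$ by $\pi(\gamma^{(v)}_i)=h_i\cdot v_0$; the letters $s_i:=h_{i-1}^{-1}h_i$ then satisfy $d_X(v_0,s_i\cdot v_0)\leqslant 2R+1$, so $s_i\in S\cup\set{1}$, and the word $w(\gamma):=s_1\cdots s_n$ evaluates to $g$ in $G$.

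The heart of the argument is showing that $L:=\set{w(\gamma)}$ is regular. Because $m$ is finite-to-one and $G$-equivariant, and $X$ has only finitely many $G$-orbits of vertices and edges, the quotient $M/G$ is a \emph{finite} directed graph. I would build an NFA on the vertex set of $M/G$ as follows: for each orbit $\ol{v}\in V(M/G)$ fix a canonical lift $\tilde v\in M$ together with an element $h_{\tilde v}\in G$ realizing $h_{\tilde v}\cdot v_0=\pi(m(\tilde v))$; for each edge of $M/G$ lifting to an edge $(\tilde v,\epsilon\cdot\tilde w)$ in $M$ with $\epsilon\in G$, attach the letter $h_{\tilde v}^{-1}\epsilon\, h_{\tilde w}$. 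A direct computation, tracking $\gamma^M_i=g_i\cdot\tilde v_i$ so that $g_i=g_{i-1}\epsilon_i$ and hence $h_i=g_i h_{\tilde v_i}$, shows that the language of this NFA is exactly $L$ (after erasing identity letters, which preserves regularity).

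The remaining two biautomaticity conditions are then comparatively routine. Every $g\in G$ is represented because $\mathcal{P}$ spans all ordered pairs in $V_0$. For fellow traveling, given $w(\gamma),w(\gamma')\in L$ whose evaluations $g,g'$ satisfy $g^{-1}g'\in S\cup\set{1}$, the endpoints of $\gamma,\gamma'$ lie at $X$-distance $\leqslant 2R+1$, so the generalized fellow traveler property supplies a uniform $K$ with $d_X(\gamma^{(v)}_i,{\gamma'}^{(v)}_i)\leqslant K$; then $d_X(h_i\cdot v_0,h'_i\cdot v_0)\leqslant K+2R$ forces $h_i^{-1}h'_i$ into a fixed finite subset of $G$, which uniformly bounds the distance in $\Cay_S(G)$ between the $i$-th prefixes of $w(\gamma)$ and $w(\gamma')$. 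I expect the main obstacle to be the finite-state encoding in the NFA: one must verify that $G$-equivariance of $m$ together with properness and cocompactness make \emph{all} of the auxiliary choices (nearest orbit points, canonical lifts, edge elements $\epsilon$) controllable within finitely many parameters, with some extra care needed to absorb potential nontrivial (finite) point stabilizers into the state set.
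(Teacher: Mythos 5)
The paper itself does not prove this theorem: it is quoted from Świątkowski's paper \cite{path_systems} (where it appears as the main result), so there is no in-paper proof to compare against. Your sketch reconstructs what is essentially the standard argument behind that result --- a Milnor--\v{S}varc generating set, projection of the vertices of a recognized path to the orbit $G\cdot v_0$, telescoping the projected vertices into letters, and a finite quotient automaton $M/G$ whose edge labels $h_{\tilde v}^{-1}\epsilon\, h_{\tilde w}$ recover those words --- and the architecture is sound.

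Three points need tightening. First, as you anticipate, a genuinely $G$-equivariant nearest-point map $\pi$ need not exist as a function when vertices have nontrivial (finite) stabilizers: $\pi(x)$ would have to be fixed by the stabilizer of $x$, which may be impossible. The standard repair is to make the choice of projection part of the automaton's state (finitely many choices per orbit, by properness); the same device handles the ambiguity of $\epsilon$ up to the stabilizer of $\tilde w$, and the fact that a path ending at $g\cdot v_0$ determines $g$ only up to the stabilizer of $v_0$. Second, do not erase the identity letters: deleting letters preserves regularity, but it can desynchronize two words and destroy exactly the synchronous fellow traveling you establish for the unerased prefixes; keep $1$ in the alphabet instead (harmless, since $S\cup\set{1}$ is still a finite symmetric generating set and the Cayley graph merely acquires loops). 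Third, the fellow traveler property in the paper's definition of biautomaticity is two-sided --- it quantifies over pairs of paths in $\Cay_S(G)$ whose starting points, as well as endpoints, are at distance at most one --- whereas you only treat paths sharing the start vertex; the missing case follows by the identical computation, since the generalized fellow traveler property applies to paths of $\mathcal{P}$ starting at $a\cdot v_0$ and $a'\cdot v_0$ with $d_X(a\cdot v_0,a'\cdot v_0)\leqslant 2R+1$. With these repairs the argument goes through.
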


The following definition was stated by J. McCammond and R. Levitt as an attempt of proving biautomaticity of groups acting on CAT(0) triangle-square complexes. More context regarding Gersten-Short geodesics can be found in \cite{triangles_squares}, particularly in section 7.

\begin{df}[Gersten-Short geodesics]
    Let $X$ be a CAT(0) triangle-square complex and let $v, u$ be two vertices in $X$. It was proven in section 7 of \cite{triangles_squares} that all the geodesic paths between $v$ and $u$ in $1$-skeleton of $X$ satisfy one of the following conditions (formal description of those conditions is presented in definition 5.3 of \cite{triangles_squares} under the name moves):
    \begin{itemize}
        \item[(i)] all geodesic paths between $v$ and $u$ have the same first edge $e$. Let $v'$ be the vertex adjacent to $e$ other than $v$;
        \item[(ii)] there are two edges $e, e'$ contained in a single square cell $S$ such that the first edge of each of the geodesic paths is either $e$ or $e'$. In such case there are geodesic paths between $v$ and $u$ that travel through either of the edges $e$ and $e'$ and then through the vertex $v'$ in $S$ not belonging to either of the edges $e, e'$;
        \item[(iii)] there are two edges $e, e'$ contained in a single triangle cell $T$ such that first edge of the geodesic paths is either $e$ or $e'$. Then there is a triangle $T'$ and a finite (but possibly empty) row of squares with triangles $T$ and $T'$ on either end. Let $v'$ be a vertex of $T'$ that doesn't belong to the row of squares, then there are geodesic paths between $v$ and $u$ traveling on either side of the described above region built from square and triangles through the point $v'$.
    \end{itemize}
    We will now define the \textit{set of choke points} between $v$ and $u$ recursively:
    \begin{itemize}
        \item if $v=u$, then there is only one choke point $v$;
        \item if $u \neq v$, then we consider set of all geodesic paths between $v$ and $u$. From the note above they have to satisfy one of the above conditions (i), (ii) or (iii). Let us define $v'$ as above, then the set of choke points between $v$ and $u$ is equal to the union $\set{v}$ and of the set of choke points between $v'$ and $u$.
    \end{itemize}
    A \textit{Gersten-Short geodesic} between $v$ and $u$ is a geodesic path traveling through all the choke points between $v$ and $u$. A visualization of Gersten-Short geodesics are presented in figures 5 and 6 of \cite{triangles_squares}.
\end{df}

The following definitions were establish in order to describe possible behaviours of Gersten-Short geodesics inside flat fragments of triangle-square complexes. For more context see section 8 of \cite{triangles_squares}.

\begin{df}[flats]
    A \textit{flat} is a triangle-square complex isometric to the the euclidean plane $\RR^2$.  A vertex inside a flat $F$ is called a \textit{corner} if there are $3$ triangle cells $T_1, T_2, T_3$ and $2$ square cells $S_1, S_2$ such that $v$ is a vertex of $T_1, T_2, T_3, S_1, S_2$ and square cells $S_1, S_2$ do not share an edge. A \textit{region} in a flat $F$ is a maximal subcomplex $Y$ built out of one type of polygons (either only squares or only triangles) such that between any two shapes $P, P'$ in $Y$ there exists a sequence $P_0=P, P_1, \ldots, P_n= P' $ of shapes in $Y$ such that for each $0 \leqslant i < n$ shapes $P_i, P_{i+1}$ have a common edge.
\end{df}

\begin{df}[Types of flats]
    If a flat $F$ contains either only triangles or only squares then we say that $F$ is \textit{pure}. If such flat contain both squares and triangles but does not contain any corners then we say that $F$ is \textit{striped}. If such flat contain at least one corner vertex and at least one unbounded region then we say that $F$ is \textit{radial}. If such flat contain contain at least one corner vertex and all the regions in that flat are bounded then we say that $F$ is \textit{crumpled}. If $F$ is crumpled and there exists a bound on number of cells of each region of $F$ then we say that $F$ is \textit{thoroughly crumpled}. 
    
    We say that a flat is \textit{potentialy periodic} if it admits a cocompact, cellular action of $\ZZ^2$. If a flat does not admit such action then we say that it is \textit{intrinsically aperiodic}. Note that all radial flats are intrinsically aperiodic.
\end{df}

It is strongly believed, both by the author of this paper and by the authors of \cite{triangles_squares}, that Gersten-Short geodesics form a regular path system. The fellow traveling of such geodesics is much more problematic since, as was described in the theorem below, such geodesics may not fellow travel even inside a single flat. For details see chapters 9 and 10 of \cite{triangles_squares}.

\begin{thm}[Levitt, McCammond]\label{ftp-in-flats}
    There exists a constant $k>0$ such that for any pure or striped flat $F$ any two Gersten-Short geodesics starting and ending at most distance $1$ apart $k$-fellow-travel. Moreover, if $F$ is a potentially periodic flat then there exists a positive constant $k$ depending on $F$ such that any two Gersten-Short geodesics starting and ending at most distance $1$ apart $k$-fellow-travel. If $F$ is intrinsically aperiodic then there might not be any global constant $k$ satisfying the above property.
\end{thm}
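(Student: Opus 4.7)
The plan is to treat each family of flats separately, exploiting its combinatorial structure and specializing from universal bounds down to per-flat bounds.

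First I would handle the pure case. A pure square flat is isometric to $\RR^2$ tiled by unit squares, and at each step the Gersten-Short construction triggers case (ii) of the choke-point rule: the two potential first edges lie in a common square and the choke point jumps to its far corner, so the geodesic follows a canonical diagonal staircase that runs to the corner of the endpoint rectangle and then proceeds straight. Two such staircases whose endpoints are at distance at most $1$ synchronize coordinate-by-coordinate and $k$-fellow travel for a tiny universal $k$. For the pure triangle flat (the Eisenstein plane $\mathcal{E}$) the analogous analysis uses the three edge directions: case (iii) applies to an adjacent pair of triangles with no intermediate squares, and the hexagonal symmetry together with the induced choke-point rule again forces a canonical geodesic whose behaviour under endpoint perturbations is purely local.

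Next, for striped flats, I would use a strip decomposition. Since there are no corners, every region of $F$ is an unbounded parallel strip of width $1$ or $\sqrt{3}/2$, all sharing a common direction. Any Gersten-Short geodesic crosses these strips in an order determined by the endpoints; within each strip it behaves as in the corresponding pure flat, and transitions between adjacent strips are controlled by at most one row of cells. Since the strip widths are uniformly bounded independently of $F$, the pure-case constant transfers with only a small controlled overhead, yielding a universal $k$. Crucially, this reasoning uses no periodicity of the strip pattern, so the same constant applies to striped flats that are intrinsically aperiodic.

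For potentially periodic flats, the strategy is a compactness argument. The Gersten-Short construction is $\ZZ^2$-equivariant because choke points are defined intrinsically from $X$, so after translating one may assume the starting vertex $v$ of one geodesic lies in a fixed compact fundamental domain $D$; then $v'$ lies in the $1$-neighbourhood of $D$, which is also compact. At any intermediate time the local behaviour of both geodesics depends only on the finitely many cell configurations appearing in $F/\ZZ^2$, and the resulting local discrepancies are uniformly bounded by compactness, giving a constant $k=k(F)$.

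The hard part, and the main obstacle, is the last assertion: building an intrinsically aperiodic flat for which no universal $k$ works. The idea is to use a radial flat containing arbitrarily large bounded regions adjacent to a corner, arranged so that a perturbation of the endpoint by a single edge forces the Gersten-Short geodesic to route along opposite sides of an enclosing region whose perimeter grows without bound, producing unbounded separation. Constructing such a radial flat explicitly is the delicate step and foreshadows the counterexamples developed in the main sections of this paper.
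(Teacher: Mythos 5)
A preliminary caveat: the paper does not prove this theorem at all --- it is quoted as a result of Levitt and McCammond, with the proof deferred to sections 9 and 10 of \cite{triangles_squares} (and the statement sits in the Preliminaries, where the paper explicitly omits proofs). So your proposal can only be judged on its own internal coherence, and judged that way it has genuine gaps.

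The most serious gap is the potentially periodic case. You argue that by $\ZZ^2$-equivariance the local behaviour of the two geodesics is drawn from the finitely many cell configurations of $F/\ZZ^2$, and that ``the resulting local discrepancies are uniformly bounded by compactness.'' But fellow traveling is not a local property: a uniform bound on the discrepancy introduced at each step is perfectly compatible with the total separation $d(\gamma^{(v)}_i,\eta^{(v)}_i)$ growing linearly in $i$. Indeed, the radial flat $F$ of Proposition \ref{fellow_traveling_in_F} is built from finitely many local configurations and yet fellow traveling fails there, so compactness of the quotient alone cannot be what distinguishes the periodic from the aperiodic case; the real argument has to use the global periodic structure to show both geodesics track a common straight line up to a bounded additive error. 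A second gap is your structural claim about striped flats: it is not true that every region is a strip of width $1$ or $\sqrt{3}/2$. A half-plane of unit squares glued along a line to a half-plane of unit triangles is a flat (each boundary vertex carries two squares and three triangles, total angle $2\pi$) with no corners, hence striped, yet both of its regions have unbounded width; so the ``uniformly bounded overhead'' transfer from the pure case does not go through as stated, and the argument must instead control how many region boundaries a geodesic crosses and what happens inside arbitrarily wide regions. Finally, the last assertion of the theorem requires exhibiting an intrinsically aperiodic flat on which no constant works; you correctly identify a radial flat with a corner as the mechanism, but you do not construct one, and without the construction that part of the statement is unproved --- this is exactly what Proposition 9.3 of \cite{triangles_squares}, and Proposition \ref{fellow_traveling_in_F} of this paper, supply.
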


Note that any potentially periodic flat is either pure or thoroughly crumpled. This observation allowed R. Levitt and J. McCammond to formulate two conjectures (9.4 and 11.1 in \cite{triangles_squares}) regarding behavior fundamental groups of CAT(0) triangle-square complexes, which were the main interest of this paper.

\begin{conj}[Intrinsically aperiodic flats] \label{conj:1}
    If $X$ is a compact locally CAT(0) triangle-square complex and $F$ is an intrinsically aperiodic flat then $F$ does not isometrically embed in $\wtl{X}$.
\end{conj}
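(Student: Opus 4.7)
The plan is a proof by contradiction: assume that $\iota\colon F \hookrightarrow \wtl{X}$ is an isometric embedding of an intrinsically aperiodic flat $F$, and aim to derive a cocompact cellular $\ZZ^2$-action on $F$. Since $X$ is compact and locally CAT(0), the fundamental group $G = \pi_1(X)$ acts geometrically and cellularly on $\wtl{X}$ by deck transformations. Let $H = \set{g\in G : g\cdot \iota(F) = \iota(F)}$ denote the setwise stabilizer of the embedded flat. The strategy is to prove that $H$ acts cocompactly on $\iota(F)$; once this is done, a cocompact cellular action on $\RR^2$ admits, by Bieberbach's theorem, a finite-index translation subgroup isomorphic to $\ZZ^2$ which still acts cocompactly and cellularly, contradicting intrinsic aperiodicity of $F$.

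To establish cocompactness of $H$, I would exploit the fact that $X$ has only finitely many cells. The projection $p\colon \wtl{X} \to X$ sends the infinitely many cells of $\iota(F)$ into a finite set, so by pigeonhole there are infinitely many pairs of cells $(\sigma, \sigma')$ in $\iota(F)$ with $p(\sigma) = p(\sigma')$; each such pair is related by a deck transformation $g_{\sigma,\sigma'} \in G$. The delicate point is to arrange enough of these deck transformations to preserve $\iota(F)$ setwise and to span two linearly independent translation directions. The natural route is to fix a combinatorial disk $D \subset \iota(F)$ of some large radius $R$, chosen so that the isomorphism type of $D$ together with its embedding into $\wtl{X}$ determines $\iota(F)$ globally. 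Then any deck transformation that carries $D$ into $\iota(F)$ would automatically preserve the whole flat, and a pigeonhole over the finitely many combinatorial types of radius-$R$ disks appearing in $\iota(F)$ would produce arbitrarily long translations inside $H$. A separate argument, using that $F$ is an unbounded planar flat rather than a strip, would be required to extract two linearly independent such translations.

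The principal obstacle is precisely this rigidity step: proving that a large enough bounded combinatorial subset of $\iota(F)$ determines the flat globally. For pure and striped flats the local tiling is essentially rigid and propagation of the pattern forces global coincidence, which is consistent with theorem \ref{ftp-in-flats}. However, in the presence of corner vertices the embedding is locally governed by independent choices at each corner, and coordinating those choices across the flat is not forced by any bounded-radius datum. This is where the proposed proof fails, and indeed the Main Theorem of this paper shows that rigidity must fail: one can build compact locally CAT(0) triangle-square complexes whose universal covers contain radial and thoroughly crumpled flats despite the corresponding stabilizers being far from cocompact, so the strategy above cannot be completed and the conjecture itself turns out to be false.
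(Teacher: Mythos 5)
You cannot prove this statement, because it is false, and the paper's whole point is to refute it: this is Conjecture \ref{conj:1} of Levitt and McCammond, and the Main Theorem disproves it by exhibiting compact locally CAT(0) triangle-square complexes $X_1$ and $X_2$ together with explicit locally isometric cellular maps from the radial (hence intrinsically aperiodic) flat $F$ into them, which lift to isometric embeddings $F \hookrightarrow \wtl{X_1}$ and $F \hookrightarrow \wtl{X_2}$ by Lemma \ref{flat_embedding} (see Lemmas \ref{F_embeds_into_X1} and \ref{F_embeds_into_X2}, and Lemma \ref{Fn_embeds_into_X1} for the thoroughly crumpled flats $F_n$ refuting Conjecture \ref{conj:2} as well). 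So no proof exists, and your final paragraph reaches the correct conclusion.

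Your own diagnosis of where the attempted argument breaks is essentially accurate, and it is worth making it sharper. The pigeonhole step only produces deck transformations carrying one cell (or one bounded disk) of $\iota(F)$ onto another congruent one; nothing forces such an element to preserve $\iota(F)$ setwise, because a bounded combinatorial neighbourhood in a flat with corner vertices does not determine the flat globally --- the radial flat $F$ of this paper and the infinite family $F_n$ all contain isometric copies of arbitrarily large disks of one another, so no radius-$R$ rigidity statement can hold. Consequently the stabilizer $H$ need not act cocompactly on $\iota(F)$ (in the examples here it does not), and the reduction to a $\ZZ^2$-action via Bieberbach never gets off the ground. Note also that the implication you are implicitly invoking is the converse of the Flat Torus Theorem: a $\ZZ^2$ subgroup yields a periodic flat, but an embedded flat need not arise from, nor be preserved by, any $\ZZ^2$ subgroup. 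The constructions of Sections \ref{first_example} and \ref{second_example} are precisely engineered counterexamples to that converse in the triangle-square setting.
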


\begin{conj}[Thoroughly crumpled planes] \label{conj:2}
    For any compact locally CAT(0) triangle-square complex $X$ there are only finitely many distinct (up to cellular isometries) thoroughly crumpled planes that embed in $\wtl{X}$.
\end{conj}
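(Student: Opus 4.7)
The plan is to exploit the tension between two rigidity inputs: (i) compactness of $X$ gives only finitely many local patterns at every scale in $\wtl{X}$ (finite local complexity), and (ii) by definition a thoroughly crumpled flat $F$ has some uniform bound $N$ on the number of cells in any region. Combining the two, I would try to show that any such $F$ must admit a cocompact $\ZZ^2$-action whose fundamental domain has size bounded by a quantity $B(X,N)$, and then observe that there are only finitely many labeled flats built from at most $B(X,N)$ cells of $X$.

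First, I would set up the labeling. An isometric cellular embedding $\iota\colon F\hookrightarrow \wtl{X}$ composed with the covering projection $\wtl{X}\to X$ gives a cellular map $F\to X$, so each cell of $F$ is labeled by a cell of $X$, and each vertex of $F$ inherits a link type drawn from the finite set of vertex links occurring in $X$. Because $F$ is a flat, the angles at every vertex sum to exactly $2\pi$, so each vertex realizes one of finitely many local configurations coming from $X$; the same is true of any ball of fixed radius in $F$.

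Next, I would study the set $\mathcal{C}(F)\subset F^{(0)}$ of corner vertices. Since every region has at most $N$ cells, and all angles lie in $\{\pi/3, \pi/2, 2\pi/3\}$, $\mathcal{C}(F)$ is coarsely $R(N)$-dense in $F$, and consecutive corners along any region boundary are at bounded distance, endowing $\mathcal{C}(F)$ with a coarse lattice-like structure. I would then apply a compactness argument: translate $F$ so that various corners become the origin and pass to subsequential limits in the pointed cellular Gromov--Hausdorff topology on labeled flats. Finite local complexity makes this space compact and, under bounded-region control, discrete up to cellular isometry, which should force $F$ itself to be a limit of its own translates -- i.e.\ to admit non-trivial cellular self-translations in two linearly independent directions, yielding a $\ZZ^2$-action with fundamental domain of size at most $B(X,N)$.

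The main obstacle will be deducing genuine periodicity of a single $F$ rather than merely precompactness of the family of all such flats. Concretely, I would need to rule out an ``aperiodic tiling'' phenomenon in which bounded regions and finite local complexity coexist without any translational symmetry, in the spirit of Penrose-type configurations. To attack this step I would classify labeled corner stars (the $2$ squares and $3$ triangles meeting at a corner, with their labels in $X$) compatible with flatness, hoping that the Gromov link condition on $X$ together with the bounded-region hypothesis forces every finite patch of $F$ to extend in an essentially unique way, and then propagate this local uniqueness to a global $\ZZ^2$-symmetry. If instead the patch-extension problem admits multiple compatible solutions inside a fixed $\wtl{X}$, that is precisely the place where the argument collapses, and where one could expect to mix and match region patterns to produce infinitely many non-isometric thoroughly crumpled flats -- which is exactly the phenomenon the sequel sections of this paper will exploit.
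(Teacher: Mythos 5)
There is a fundamental problem here: the statement you are trying to prove is Conjecture \ref{conj:2}, a conjecture of Levitt and McCammond which this paper does not prove but \emph{refutes}. The Main Theorem asserts the existence of a compact locally CAT(0) triangle-square complex whose universal cover contains an infinite family of pairwise distinct thoroughly crumpled flats, and Lemma \ref{Fn_embeds_into_X1} exhibits this concretely: every one of the flats $F_n$ embeds in $\wtl{X_1}$ (and the flats $F_{2n-1}$ embed in $\wtl{X_2}$). So no proof of the statement can be correct, and it is worth locating exactly where yours breaks.

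The fatal step is the final finiteness count. The bound $N$ on the number of cells per region is, by definition, a constant attached to the individual flat $F$, not to $X$; so even granting your (already delicate) claim that each such $F$ admits a cocompact $\ZZ^2$-action with fundamental domain of size at most $B(X,N)$, you only obtain finitely many flats \emph{for each fixed} $N$. To conclude the conjecture you would additionally need $N$ to be uniformly bounded over all thoroughly crumpled flats embedding in $\wtl{X}$, and compactness of $X$ gives no such uniformity: in the paper's example $X_1$ (a complex with a handful of cells), the embedded flats $F_n$ have triangle regions with on the order of $n^2$ cells and square regions with $n$ cells, so their region bounds --- and the sizes of their $\ZZ^2$-fundamental domains --- grow without bound while the ambient complex stays fixed. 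Note also that the obstruction is not the Penrose-type aperiodicity you flagged as the main worry: each $F_n$ is perfectly periodic (indeed potentially periodic is automatic for thoroughly crumpled flats in the sense used here); the failure is simply periodicity with unbounded period. The infinite family arises by restricting one fixed labelling $F\to X_1$ to larger and larger tiles $D_n$ and propagating by translations, which is precisely the ``mix and match within a fixed $\wtl{X}$'' phenomenon you mention in your last sentence --- that possibility is not a loose end to be ruled out, it is what actually happens.
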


\section{Flats in triangle-square complexes} \label{flats_in_complexes}

\begin{lm} \label{flat_embedding}
    Let $F$ be a flat and let $X$ be a locally CAT(0) triangle-square complex. Flat $F$ embeds in the universal cover $\wtl{X}$ if there exists a cellular map $\varphi\colon F \to X$ locally isometric on each cell of $F$ such that for all vertices the induced map $\Lk(v,F) \to \Lk(\varphi(v), X)$ is injective.
\end{lm}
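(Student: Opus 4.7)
The plan is to lift $\varphi$ to a map into the universal cover $\widetilde{X}$ and show that this lift is an isometric embedding. Since $F\cong \RR^2$ is simply connected and locally path\-/connected, covering space theory produces a cellular lift $\widetilde{\varphi}\colon F \to \widetilde{X}$ of $\varphi$ along the covering map $p\colon \widetilde{X} \to X$. Because $p$ restricts to an isometry on each cell and thus induces isometries on links, $\widetilde{\varphi}$ inherits the two hypotheses of the lemma: it is locally isometric on each cell, and the induced map on links is injective at every vertex of $F$.

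The heart of the argument is to upgrade these local properties to a global isometric embedding. I first would show that $\widetilde{\varphi}$ is a local isometry on all of $F$. Away from the $0$\-/skeleton this is immediate from local isometry on cells; the only nontrivial case is at a vertex $v \in F$, where being a local isometry is equivalent to the induced link map $\Lk(v,F) \to \Lk(\widetilde{\varphi}(v), \widetilde{X})$ being an isometric embedding of metric graphs. Since $F$ is Euclidean, $\Lk(v,F)$ is a circle of total length exactly $2\pi$, and cellularity together with local isometry on cells forces the link map to preserve the length of each edge (corresponding angles in matching cells agree). By the Cartan--Hadamard theorem (\ref{CH}) the universal cover $\widetilde{X}$ is CAT(0), hence locally CAT(0), so by the Gromov link condition (\ref{GLC}) the link $\Lk(\widetilde{\varphi}(v), \widetilde{X})$ has systole at least $2\pi$. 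Combined with combinatorial injectivity, this forces the link map to preserve distances: any shortcut between points at distance $r \leqslant \pi$ on the image circle would close up with the complementary arc of length $2\pi - r$ into a loop of total length strictly less than $2\pi$, whose underlying injective subcycle would violate the systole bound.

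Once $\widetilde{\varphi}$ is known to be a local isometry, the proof finishes quickly. For any two points $p, q \in F$, the Euclidean geodesic segment between them is a local geodesic in $F$, and its image under $\widetilde{\varphi}$ is a local geodesic in the CAT(0) space $\widetilde{X}$. By Fact \ref{simply_connected+local_geodesics}, local geodesics in CAT(0) spaces are genuine geodesics, so the image segment has length equal to $d_F(p,q)$, which yields $d_{\widetilde{X}}(\widetilde{\varphi}(p), \widetilde{\varphi}(q)) = d_F(p,q)$. Distance preservation already implies injectivity, giving the desired isometric embedding $F \hookrightarrow \widetilde{X}$.

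I expect the main technical obstacle to be the link\-/level step in the middle paragraph: extracting the metric statement that the link map is an isometric embedding from the purely combinatorial hypothesis of injectivity. The argument relies crucially on the fact that the link of a vertex in a flat has length precisely $2\pi$, matching the systole bound coming from local CAT(0)\-/ness of $\widetilde{X}$ and leaving no room for shortcuts; for a general CAT(0) subcomplex the analogous statement would fail.
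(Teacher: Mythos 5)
Your proof is correct and follows essentially the same route as the paper's: lift $\varphi$ to $\widetilde{\varphi}\colon F\to\widetilde{X}$, observe that it is a local isometry, and conclude via the fact that local geodesics in a CAT(0) space are genuine geodesics (the paper phrases this as a contradiction from a positive-length geodesic loop, you phrase it as direct distance preservation, which is the same argument). Your middle paragraph additionally supplies a justification, via the $2\pi$ systole bound on links, for the step ``locally isometric on cells plus injective link maps implies local isometry,'' which the paper simply asserts without proof.
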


\begin{proof}
    Let $\wtl{\varphi}\colon F \to \wtl{X}$ be a lifting map, i.e. continuous map such that $\varphi = p\circ \wtl{\varphi}$. We need to show that the map $\wtl{\varphi}$ is an embedding, let us assume on the contrary that it is not. Since $\wtl{\varphi}$ is not an embedding, there are two different points $x, y\in F$ such that $\wtl{\varphi}(x) = \wtl{\varphi}(y)$, let $\gamma$ be a geodesic in euclidean space $F$ between $x$ and $y$. Since $\varphi$ is locally isometric on each cell and the induced map $\Lk(v,F) \to \Lk(\varphi(v), X)$ is injective then $\varphi$ is a local isometry. Moreover $p$ is also a local isometry, therefore $\wtl{\varphi}$ is a local isometry as well. Thus $\wtl{\varphi}\circ \gamma$ is a local geodesic in a CAT(0) space, therefore from fact \ref{simply_connected+local_geodesics} we know that it is a geodesic. But the endpoints of $\wtl{\varphi}\circ \gamma$ are the same point, therefore such geodesic has to have length $0$, which contradicts the fact that $\wtl{\varphi}$ is a local isometry and $\gamma$ has non-zero length.
\end{proof}

\begin{df}[The triangle-square flat $\mathbf{F}$]
    Below we present the formal constuction of flat $F$ presented in the figure \ref{flat_F}. Let $\mathcal{E}$ be the Eisenstein plane and let $r_0, r_1, \ldots, r_5$ be the $6$ pairwise distinct rays on $\mathcal{E}$ starting at $0$ and contained in the $1$-skeleton of $\mathcal{E}$. The flat $F$ is obtained by replacing each of the rays $r_0, r_1, \ldots, r_5$ by a strip $[0, \infty)\times[0,1]$ cubulated by unit squares and replacing the central vertex $0$ by a unit regular hexagon triangulated using unit equilateral triangles. For future use we will denote the middle point of the central hexagon of $F$ as $o$.
    
    \begin{figure}[!ht]
    \includegraphics[trim=10.5cm 20cm 10.5cm 20cm, clip, width=\textwidth]{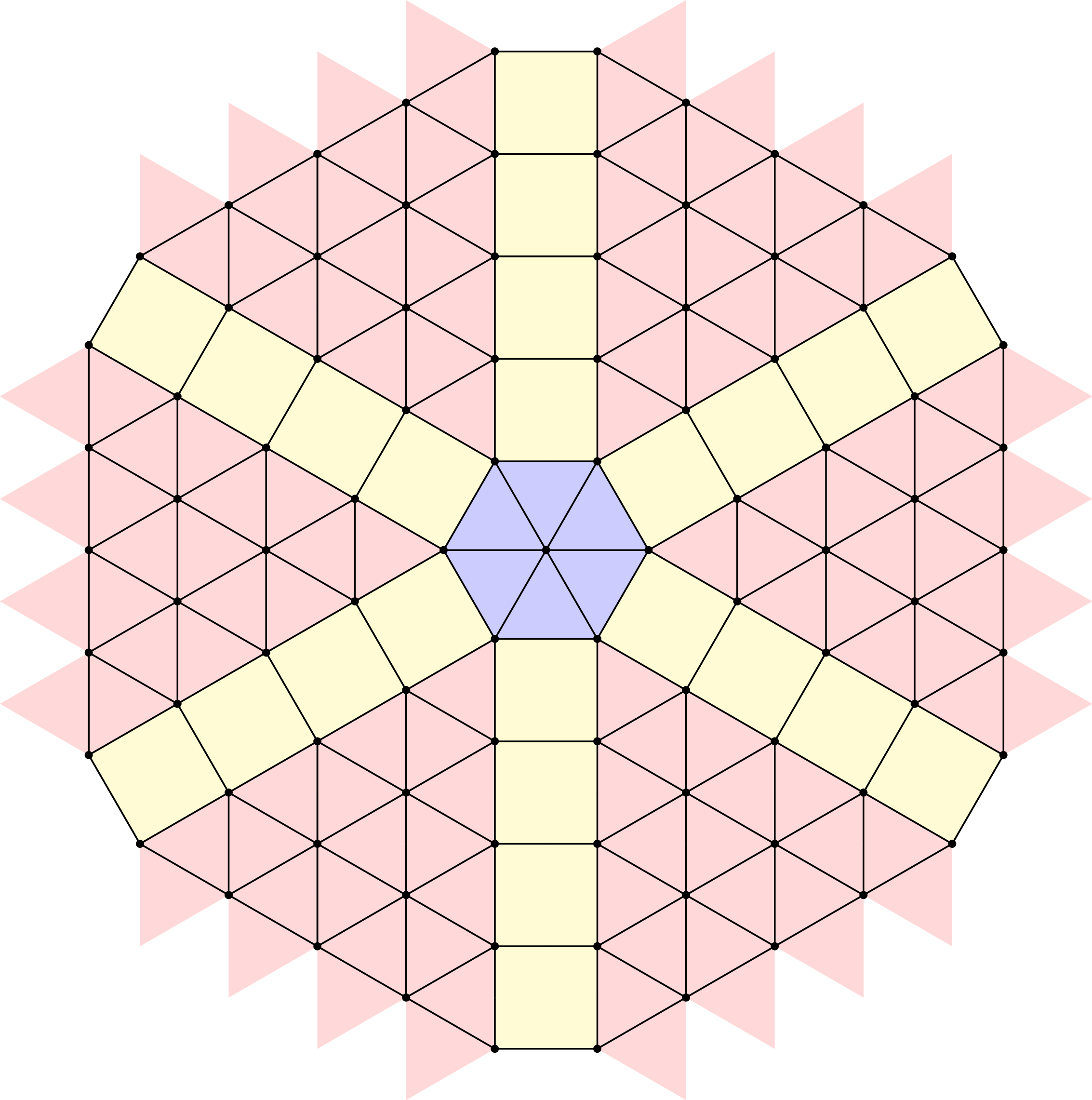}
    \caption{A part of a radial flat $F$.}
    \label{flat_F}
\end{figure}
\end{df}
    
    Since flat $F$ is radial, proving its existence in universal cover of any compact, locally CAT(0) triangle-square complex would automatically disprove conjecture \ref{conj:1}. Moreover, similarly to an example given in \cite{triangles_squares} (Proposition 9.3 in mentioned paper), Gersten-Short geodesics do not fellow travel.

\begin{prop}[Fellow traveling in flat $\mathbf{F}$] \label{fellow_traveling_in_F}
    The Gersten-Short geodesics do not fellow travel in the flat $F$.
\end{prop}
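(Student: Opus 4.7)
The plan is to construct, for arbitrarily large $N$, pairs of Gersten-Short geodesics $\gamma_N, \gamma'_N$ in $F$ whose starting and ending vertices are at combinatorial distance at most $1$, but whose intermediate vertices are at distance at least $N$. The existence of such a family immediately obstructs any fixed fellow-traveling constant $k$, establishing the proposition.

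First, I would set up notation for the cellular structure of $F$. Let $H$ denote the central hexagon triangulated into six triangles meeting at $o$, let $A_0, \ldots, A_5$ denote the six square strips attached along the edges of $H$, and let $S_0, \ldots, S_5$ denote the six triangular sectors between consecutive arms. Within each arm I would use coordinates $(k, j)$ with $k \geq 0$ (depth from $H$) and $j \in \{0, 1\}$ (transverse position), and within each sector I would use coordinates inherited from the Eisenstein plane. Following an analog of Proposition 9.3 in \cite{triangles_squares}, I would then select endpoint pairs $(a_N, b_N)$ and $(\tilde{a}_N, \tilde{b}_N)$ with $d(a_N, \tilde{a}_N), d(b_N, \tilde{b}_N) \leq 1$, placed at distance of order $N$ from $o$ in positions chosen so that the CAT(0) geodesics nearly overlap while the combinatorial shortest paths are forced to interact with the hexagonal center, and so that the unit-length shift from $a_N$ to $\tilde{a}_N$ flips which of the cases (i), (ii), (iii) governs the first edges of the set of shortest paths.

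Next, I would compute the sequences of choke points $a_N = c_0, c_1, c_2, \ldots$ and $\tilde{a}_N = c'_0, c'_1, c'_2, \ldots$ step by step, applying the recursive definition and classifying each step by case. The two sequences should agree for a while away from $H$ (where the local structure is pure square or pure triangle, covered by Theorem \ref{ftp-in-flats}), but diverge once the geodesics enter the mixed square-triangle environment near $H$, where the same local configuration can yield different valid first edges for nearby vertices. I would then trace each geodesic through the arm or sector it subsequently exits through, and argue that by the time $\gamma_N$ and $\gamma'_N$ have traversed the central region, the corresponding vertices $\gamma_N^{(v)}_t$ and $(\gamma'_N)^{(v)}_t$ lie in distinct arms or sectors separated by distance $\Omega(N)$.

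The main obstacle is to ensure that the divergence can genuinely grow unboundedly in $N$, rather than being capped by the diameter of $H$ (which would only give bounded divergence of order $O(1)$). Such a cap would arise if both geodesics had to pass through the same few central vertices; the content of the proposition is that the recursive choke point structure, combined with the intrinsic aperiodicity of the radial flat, forces the two geodesics to exit $H$ through different sides and to continue far into different regions. Making this rigorous requires a careful enumeration of shortest combinatorial paths between the chosen endpoints near $H$, a delicate verification that the one-unit perturbation of the endpoint flips the operative case of the Gersten-Short definition, and a confirmation that the alternative choke point after the perturbation genuinely lies in a distinct arm or sector from the original one. The enumeration is complicated by the fact that the combinatorial metric transitions between an $\ell_1$-like metric in the strips and an $A_2$-like metric in the sectors precisely at the arm-sector boundaries crossed by the candidate geodesics.
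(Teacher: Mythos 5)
Your high-level strategy---exhibiting, for arbitrarily large $N$, endpoint pairs at distance at most $1$ whose Gersten-Short geodesics separate by $\Omega(N)$ somewhere in the middle---is the only possible strategy, and to that extent you are on the right track. But the proposal stops exactly where the proof has to begin: you never produce the configuration, and you explicitly flag the decisive verification (``the main obstacle is to ensure that the divergence can genuinely grow unboundedly in $N$, rather than being capped by the diameter of $H$'') as unresolved. That obstacle is dissolved not by a ``careful enumeration'' of choke points near the hexagon but by choosing the endpoints correctly in the first place. The paper's configuration is: take $u_1,u_2$ adjacent vertices in the interior of one square strip, both at distance $\ell$ from $o$ (with $\ell$ odd), and let $v_1,v_2$ be the images of $u_2,u_1$ under the rotation by $2\pi/3$ about $o$, labelled so that $d(u_1,v_1)<d(u_2,v_2)$. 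Then two facts do all the work: the \emph{unique} $1$-skeleton geodesic from $u_2$ to $v_2$ passes through $o$ (so it is automatically the Gersten-Short geodesic and there is nothing to compute recursively), while every point of every Gersten-Short geodesic from $u_1$ to $v_1$ stays at distance at least $\tfrac{\ell+1}{2}$ from $o$. Comparing the two paths at the time index where the first one sits at $o$ gives separation $\geq\tfrac{\ell+1}{2}$, which is unbounded in $\ell$.

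Note also that the qualitative mechanism you describe---both geodesics traversing the central region and then ``exiting through different arms'' to ``continue far into different regions''---is not the one that works, and cannot be: two paths ending at distance $\leq 1$ apart cannot end up in sectors separated by $\Omega(N)$. The actual mechanism is that one geodesic is pinned to the center while the other is pushed away from it by a distance linear in $\ell$; the separation is attained in the middle and the paths reconverge at their endpoints. Likewise, your expectation that the one-unit perturbation ``flips which of the cases (i), (ii), (iii) governs the first edges'' is a red herring---for the pair $(u_2,v_2)$ no case analysis is needed because the geodesic is unique, and for $(u_1,v_1)$ one only needs the lower bound on the distance from $o$, not a step-by-step computation of the choke points. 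As written, the proposal is a research plan whose hard core is acknowledged but not carried out, so it does not constitute a proof.
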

\begin{proof}
    Let $F$ be the flat described in the previous definition and let $u_1, u_2$ be two vertices, both at distance $\ell$ from $o$ in $1$-skeleton of $F$, connected by an edge in the interior of one of square regions. Moreover let $v_1, v_2$ be two vertices obtained by rotating vertices $u_2, u_1$ around the point $o$ by an angle of $\frac{2\pi}{3}$ in direction such that $d(u_1, v_1) < d(u_2, v_2)$. The figure \ref{Fellow_traveling_in_F_picture} illustrates described points and Gersten-Short geodesics between $u_1, v_1$ and $u_2, v_2$ for $\ell = 7$. Note that, similarly to the example described in proposition 9.3 of \cite{triangles_squares}, the Gersten-Short geodesic between $u_1$ and $v_1$ travels through the complex in a roughly straight line (which is roughly horizontal in the figure \ref{Fellow_traveling_in_F_picture}). In that configuration for any odd $\ell$ the distance in $1$-skeleton of $F$ between $o$ and any point on a Gersten-Short geodesic between $u_1$ and $v_1$ is at least $\frac{\ell+1}{2}$. Moreover the only geodesic, thus the Gersten-Short geodesic, between $u_2$ and $v_2$ in one skeleton of $F$ travels through the vertex $o$, therefore Gersten-Short geodesics cannot fellow travel on the entire flat $F$. 
\end{proof}

\begin{figure}[!ht]
    \includegraphics[trim=9cm 20cm 9cm 35cm, clip, width=\textwidth]{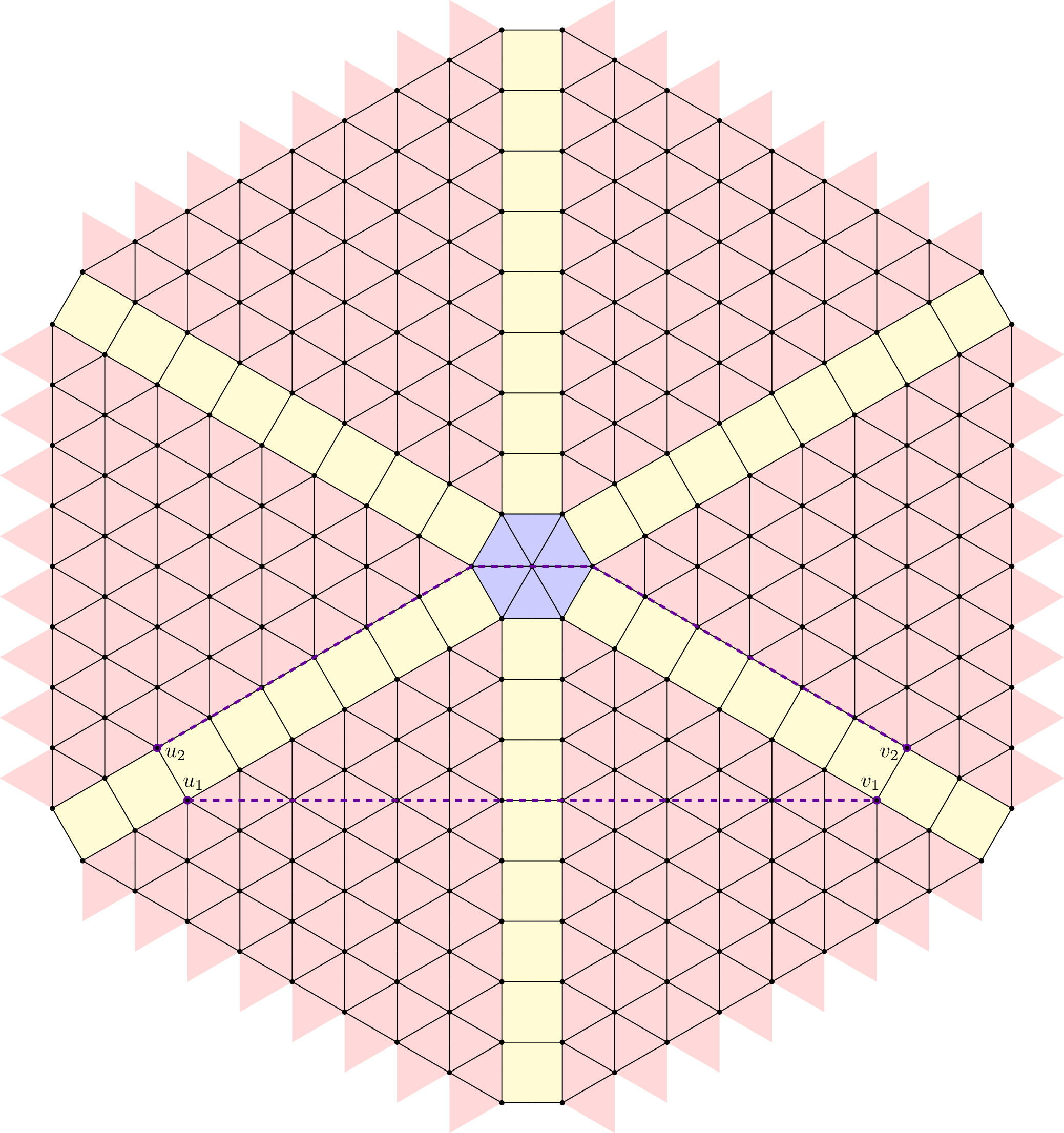}
    \caption{Points $u_1, u_2, v_1, v_2$ and Gersten-Short geodesics inside flat $F$.}
    \label{Fellow_traveling_in_F_picture}
\end{figure}

\begin{df}[The triangle-square flats $\mathbf{F_n}$]
    Let $\mathcal{E}$ be the Eisenstein plane. The flat $G$ is obtained by replacing each edge by an unit square and each vertex by an unit regular hexagon. The flat $F_n$ is obtained from flat $G$ described above by subdividing each square into $n$ rectangles with sides $1$ and $\frac{1}{n}$ each in such a way that each edge of length $1$ is parallel to an edge of a hexagon, subdividing each of the triangles into $n^2$ equilateral triangles with sides of length $\frac{1}{n}$ each, subdividing each of the hexagons into $6$ unit equilateral triangles length and adjusting the metric in such a way that each of the rectangles becomes a unit square and each of the triangles becomes a unit triangle (see Figure \ref{flat_F3}).
    
    \begin{figure}[!ht]
    \includegraphics[trim=10cm 75cm 10cm 75cm, clip,width=\textwidth]{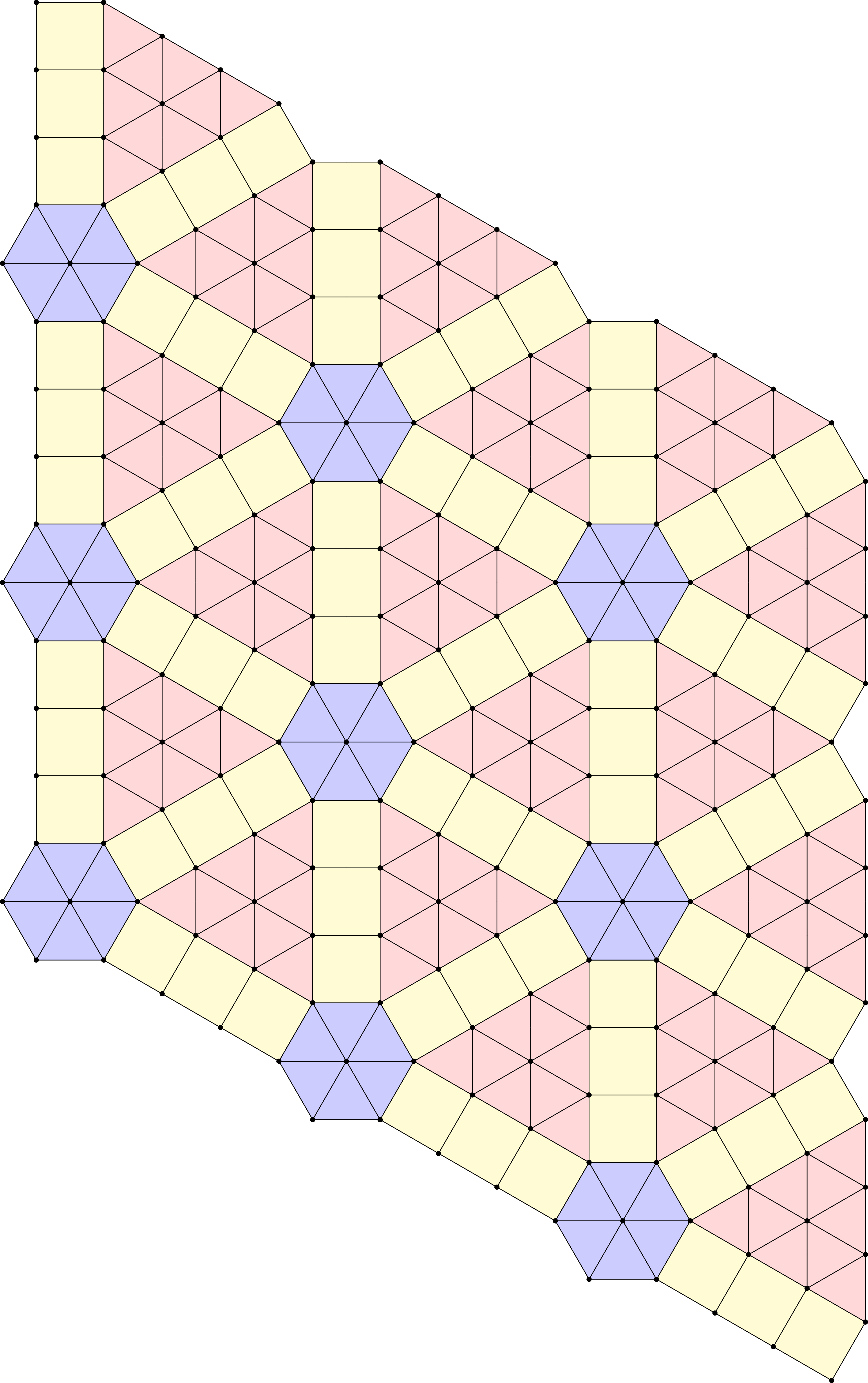}
    \caption{A part of a thoroughly crumpled flat $F_3$.}
    \label{flat_F3}
\end{figure}

    Since flats $F_n$ are parwise non-isomorphic thoroughly crumpled flats, for the purpose of disproving conjecture \ref{conj:2} it is enough to show that there exist a compact, locally CAT(0) triangle-square complex $X$ and an infinite subfamily of family $F_n$ such that each flat from that family embeds into the universal cover $\wtl{X}$. Note that any bounded part of flat $F$ is contained in all but finitely many of flats $F_n$, thus an argument from proposition \ref{fellow_traveling_in_F} can be replicated in some flat from family described above. Therefore existence of such family also automatically proves that Gersten-Short geodesics cannot fellow travel in~$\wtl{X}$.
\end{df}

\section{First Example}\label{first_example}

Let $X_1$ complex presented in the figure \ref{example_1} be a space consisting of three rhombi $K_1L_1M_1N_1$, $K_2L_2M_2N_2$, $ABCD$ with sides of unit length and such that $\sphericalangle N_1K_1L_1 = \sphericalangle N_2K_2L_2 = \sphericalangle ABC = \frac{\pi}{3}$ and three unit squares $P_1Q_1R_1S_1$, $P_2Q_2R_2S_2$ and $P_3Q_3R_3S_3$ glued together in the following way:
\begin{align*}
    e_1:M_1L_1\sim N_1K_1\sim P_1Q_1, && e_2:K_1L_1\sim N_1M_1\sim P_2Q_2, && e_3:L_1N_1\sim P_3Q_3,
\end{align*}
\begin{align*}
    f_1:L_2K_2\sim M_2N_2\sim S_1R_1, && f_2:L_2M_2\sim K_2N_2\sim S_2R_2, && f_3:N_2L_2\sim S_3R_3,
\end{align*}
\begin{align*}
    g:BA\sim CD\sim P_1S_1 \sim Q_1R_1, && h:CB \sim DA \sim P_2S_2 \sim Q_2R_2,
\end{align*}
\begin{align*}
    i:AC\sim P_3S_3 \sim Q_3R_3.
\end{align*}
\begin{figure}[!ht]
    \begin{center}
\begin{tikzpicture}
\fill[fill=red!15] (6,0.75)--(7.732,1.75)--(6,2.75);
\fill[fill=red!15] (6,0.75)--(4.268,1.75)--(6,2.75);
\draw[red, thick, ->] (6,0.75) -- (6,1.75);
\draw[red, thick] (6,1.75) -- (6,2.75);
\draw[black!50!green, thick, ->] (7.732,1.75) -- (6.866,1.25);
\draw[black!50!green, thick] (6,0.75) -- (6.866,1.25);
\draw[blue, thick, ->] (6,2.75) -- (6.866,2.25);
\draw[blue, thick] (7.732,1.75) -- (6.866,2.25);
\draw[blue, thick, ->] (4.268,1.75) -- (5.134,1.25);
\draw[blue, thick] (6,0.75) -- (5.134,1.25);
\draw[black!50!green, thick, ->] (6,2.75) -- (5.134,2.25);
\draw[black!50!green, thick] (5.134,2.25) -- (4.268,1.75);
\filldraw[black!50!green] (5.134,2.25) circle (0pt) node[anchor=south east]{$e_1$};
\filldraw[black!50!green] (6.866,1.25) circle (0pt) node[anchor=north west]{$e_1$};
\filldraw[blue] (6.866,2.25) circle (0pt) node[anchor=south west]{$e_2$};
\filldraw[blue] (5.134,1.25) circle (0pt) node[anchor=north east]{$e_2$};
\filldraw[red] (6,1.75) circle (0pt) node[anchor=east]{$e_3$};
\filldraw[black] (7.732,1.75) circle (2pt) node[anchor=west]{$M_1$};
\filldraw[black] (4.268,1.75) circle (2pt) node[anchor=east]{$K_1$};
\filldraw[black] (6,0.75) circle (2pt) node[anchor=north]{$L_1$};
\filldraw[black] (6,2.75) circle (2pt) node[anchor=south]{$N_1$};

\fill[fill=red!15] (6,-0.75)--(7.732,-1.75)--(6,-2.75);
\fill[fill=red!15] (6,-0.75)--(4.268,-1.75)--(6,-2.75);
\draw[red, thick, ->>] (6,-0.75) -- (6,-1.75);
\draw[red, thick] (6,-1.75) -- (6,-2.75);
\draw[black!50!green, thick, ->>] (7.732,-1.75) -- (6.866,-1.25);
\draw[black!50!green, thick] (6,-0.75) -- (6.866,-1.25);
\draw[blue, thick, ->>] (6,-2.75) -- (6.866,-2.25);
\draw[blue, thick] (7.732,-1.75) -- (6.866,-2.25);
\draw[blue, thick, ->>] (4.268,-1.75) -- (5.134,-1.25);
\draw[blue, thick] (6,-0.75) -- (5.134,-1.25);
\draw[black!50!green, thick, ->>] (6,-2.75) -- (5.134,-2.25);
\draw[black!50!green, thick] (5.134,-2.25) -- (4.268,-1.75);
\filldraw[black!50!green] (5.134,-2.25) circle (0pt) node[anchor=north east]{$f_1$};
\filldraw[black!50!green] (6.866,-1.25) circle (0pt) node[anchor=south west]{$f_1$};
\filldraw[blue] (6.866,-2.25) circle (0pt) node[anchor=north west]{$f_2$};
\filldraw[blue] (5.134,-1.25) circle (0pt) node[anchor=south east]{$f_2$};
\filldraw[red] (6,-1.75) circle (0pt) node[anchor=east]{$f_3$};
\filldraw[black] (7.732,-1.75) circle (2pt) node[anchor=west]{$M_2$};
\filldraw[black] (4.268,-1.75) circle (2pt) node[anchor=east]{$K_2$};
\filldraw[black] (6,-0.75) circle (2pt) node[anchor=south]{$N_2$};
\filldraw[black] (6,-2.75) circle (2pt) node[anchor=north]{$L_2$};

\fill[fill=blue!20] (8,0)--(9,1.732)--(10,0)--(9,-1.732);

\draw[black, thick, ->>>] (8,0)--(9,0);
\draw[black, thick] (9,0)--(10,0);
\filldraw[black] (9,0) circle (0pt) node[anchor=south]{$i$};

\draw[black, thick, ->>] (9,1.732)--(8.5,0.866);
\draw[black, thick] (8,0)--(8.5,0.866);
\draw[black, thick] (9,-1.732)--(9.5,-0.866);
\draw[black, thick, ->>] (10,0)--(9.5,-0.866);
\filldraw[black] (8.5,0.866) circle (0pt) node[anchor=south east]{$h$};
\filldraw[black] (9.5,-0.866) circle (0pt) node[anchor=north west]{$h$};

\draw[black, thick] (8,0)--(8.5,-0.866);
\draw[black, thick, ->] (9,-1.732)--(8.5,-0.866);
\draw[black, thick] (9.5,0.866)--(9,1.732);
\draw[black, thick, ->] (10,0)--(9.5,0.866);
\filldraw[black] (8.5,-0.866) circle (0pt) node[anchor=north east]{$g$};
\filldraw[black] (9.5,0.866) circle (0pt) node[anchor=south west]{$g$};

\filldraw[black] (10,0) circle (2pt) node[anchor=west]{$C$};
\filldraw[black] (9,1.732) circle (2pt) node[anchor=south]{$D$};
\filldraw[black] (8, 0) circle (2pt) node[anchor=east]{$A$};
\filldraw[black] (9,-1.732) circle (2pt) node[anchor=north]{$B$};

\fill[fill=yellow!20] (11,0.75)--(13,0.75)--(13,2.75)--(11,2.75);
\draw[black, thick, ->] (11,0.75)--(11,1.75);
\draw[black, thick] (11,1.75)--(11,2.75);
\filldraw[black] (11,1.75) circle (0pt) node[anchor=east]{$g$};
\draw[black, thick, ->] (13,0.75)--(13,1.75);
\draw[black, thick] (13,1.75)--(13,2.75);
\filldraw[black] (13,1.75) circle (0pt) node[anchor=west]{$g$};
\draw[black!50!green, thick, ->] (11,0.75)--(12,0.75);
\draw[black!50!green, thick] (12,0.75)--(13,0.75);
\filldraw[black!50!green] (12,0.75) circle (0pt) node[anchor=north]{$e_1$};
\draw[black!50!green, thick, ->>] (11,2.75)--(12,2.75);
\draw[black!50!green, thick] (12,2.75)--(13,2.75);
\filldraw[black!50!green] (12,2.75) circle (0pt) node[anchor=south]{$f_1$};
\filldraw[black] (11,0.75) circle (2pt) node[anchor=north east]{$P_1$};
\filldraw[black] (13,0.75) circle (2pt) node[anchor=north west]{$Q_1$};
\filldraw[black] (13,2.75) circle (2pt) node[anchor=south west]{$R_1$};
\filldraw[black] (11,2.75) circle (2pt) node[anchor=south east]{$S_1$};

\fill[fill=yellow!20] (11,-0.75)--(13,-0.75)--(13,-2.75)--(11,-2.75);
\draw[black, thick, ->>] (11,-2.75)--(11,-1.75);
\draw[black, thick] (11,-1.75)--(11,-0.75);
\filldraw[black] (11,-1.75) circle (0pt) node[anchor=east]{$h$};
\draw[black, thick, ->>] (13,-2.75)--(13,-1.75);
\draw[black, thick] (13,-1.75)--(13,-0.75);
\filldraw[black] (13,-1.75) circle (0pt) node[anchor=west]{$h$};
\draw[blue, thick, ->] (11,-2.75)--(12,-2.75);
\draw[blue, thick] (12,-2.75)--(13,-2.75);
\filldraw[blue] (12,-2.75) circle (0pt) node[anchor=north]{$e_2$};
\draw[blue, thick, ->>] (11,-0.75)--(12,-0.75);
\draw[blue, thick] (12,-0.75)--(13,-0.75);
\filldraw[blue] (12,-0.75) circle (0pt) node[anchor=south]{$f_2$};
\filldraw[black] (11,-2.75) circle (2pt) node[anchor=north east]{$P_2$};
\filldraw[black] (13,-2.75) circle (2pt) node[anchor=north west]{$Q_2$};
\filldraw[black] (13,-0.75) circle (2pt) node[anchor=south west]{$R_2$};
\filldraw[black] (11,-0.75) circle (2pt) node[anchor=south east]{$S_2$};

\fill[fill=yellow!20] (14.5,-1)--(16.5,-1)--(16.5,1)--(14.5,1);
\draw[black, thick, ->>>] (14.5,-1)--(14.5,0);
\draw[black, thick] (14.5,0)--(14.5,1);
\filldraw[black] (14.5,0) circle (0pt) node[anchor=east]{$i$};
\draw[black, thick, ->>>] (16.5,-1)--(16.5,0);
\draw[black, thick] (16.5,0)--(16.5,1);
\filldraw[black] (16.5,0) circle (0pt) node[anchor=west]{$i$};
\draw[red, thick, ->] (14.5,-1)--(15.5,-1);
\draw[red, thick] (15.5,-1)--(16.5,-1);
\filldraw[red] (15.5,-1) circle (0pt) node[anchor=north]{$e_3$};
\draw[red, thick, ->>] (14.5,1)--(15.5,1);
\draw[red, thick] (15.5,1)--(16.5,1);
\filldraw[red] (15.5,1) circle (0pt) node[anchor=south]{$f_3$};
\filldraw[black] (14.5,-1) circle (2pt) node[anchor=north east]{$P_3$};
\filldraw[black] (16.5,-1) circle (2pt) node[anchor=north west]{$Q_3$};
\filldraw[black] (16.5,1) circle (2pt) node[anchor=south west]{$R_3$};
\filldraw[black] (14.5,1) circle (2pt) node[anchor=south east]{$S_3$};

\end{tikzpicture}
\end{center}
    \caption{The polyhedral complex $X_1$}
    \label{example_1}
\end{figure}
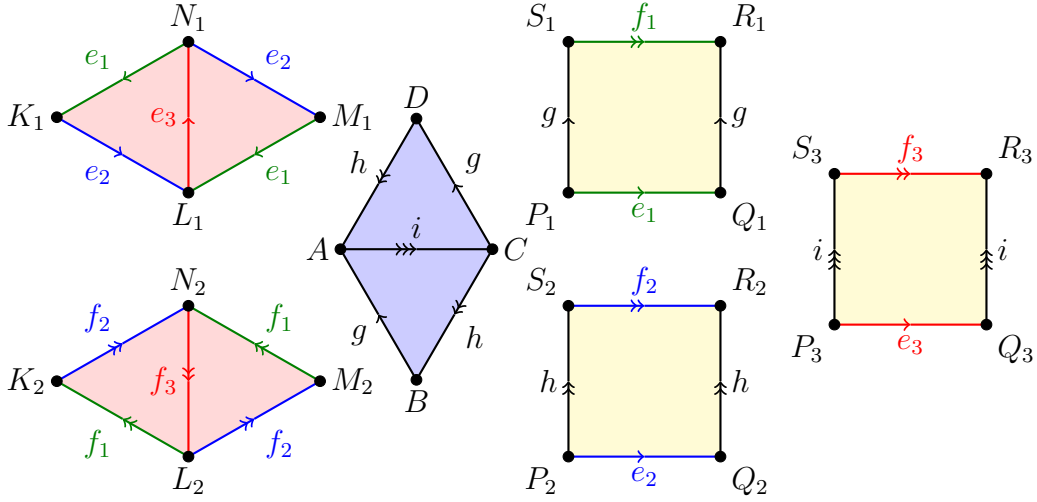

Note that by subdividing each of the rhombi from the definition above into two unit triangles we can in fact treat $X_1$ as an triangle-square complex with only one vertex $v$. Now we can prove the following lemma regarding geometry of $X_1$:

\begin{lm}
    The universal cover $\wtl{X_1}$ is CAT(0).
\end{lm}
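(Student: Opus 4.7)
The plan is to invoke the Cartan-Hadamard theorem (Theorem~\ref{CH}) and reduce, via the Gromov link condition (Theorem~\ref{GLC}), to checking that every injective loop in $\Lk(v, X_1)$ has length at least $2\pi$ at every vertex $v \in X_1$. First, tracing the identifications: the edges $e_1, e_2$ force $K_1 \sim L_1 \sim M_1 \sim N_1$, analogously $K_2 \sim L_2 \sim M_2 \sim N_2$ and $A \sim B \sim C \sim D$, and the three square-gluings merge these three vertex classes, so $X_1$ has a single vertex $v$ and only this one link needs to be inspected.

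Next I would subdivide each rhombus along its short diagonal (these diagonals are exactly the labelled edges $e_3$, $f_3$ and $i$) into two unit equilateral triangles, turning $X_1$ into a genuine triangle-square complex. The link $\Lk(v, X_1)$ then has $18$ vertices, one pair $e, \ol{e}$ for each of the nine oriented edges; the six triangles contribute $18$ length-$\frac{\pi}{3}$ edges and the three squares contribute $12$ length-$\frac{\pi}{2}$ edges. A careful corner-by-corner calculation, tracking which vertex of each cell is the initial and which the terminal end of each oriented edge, shows that the triangle-edges from rhombus $K_1L_1M_1N_1$ are exactly the pairs $\{e_i, \ol{e_j}\}$ with $i \neq j$, and these form a Hamiltonian $6$-cycle $C_e$ on the vertex set $\{e_1, \ol{e_1}, e_2, \ol{e_2}, e_3, \ol{e_3}\}$ (isomorphic to $K_{3,3}$ minus a perfect matching). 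Analogously the rhombi $K_2L_2M_2N_2$ and $ABCD$ yield $6$-cycles $C_f$ and $C_g$ on the remaining twelve link-vertices, each of total length $2\pi$. The twelve square-edges run only between $C_g$ and $C_e \cup C_f$: the vertices $g, h, i$ are the $\frac{\pi}{2}$-neighbours of the pairs $\{e_k, \ol{e_k}\}$ respectively, and $\ol{g}, \ol{h}, \ol{i}$ of the pairs $\{f_k, \ol{f_k}\}$.

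To verify the link condition I would rule out injective cycles of length less than $2\pi$. Since every edge has length at least $\frac{\pi}{3}$, such a cycle has at most five edges, so only $3$-, $4$- and $5$-cycles need analysis, and three structural observations dispose of all cases. First, every $\frac{\pi}{3}$-edge lies inside one of $C_e, C_f, C_g$, so every all-$\frac{\pi}{3}$ cycle is contained in a single $6$-cycle and is therefore either that whole $6$-cycle (length exactly $2\pi$) or nonexistent. Second, the $\frac{\pi}{2}$-edges form a bipartite structure between $C_g$ and $C_e \cup C_f$ that every cycle must cross an even number of times; this eliminates every cycle with an odd number of $\frac{\pi}{2}$-edges, in particular the $3$-cycles with one or three $\frac{\pi}{2}$-edges, the $4$-cycles of shape $3\cdot\frac{\pi}{3}+\frac{\pi}{2}$ and $\frac{\pi}{3}+3\cdot\frac{\pi}{2}$, and the $5$-cycle of shape $4\cdot\frac{\pi}{3}+\frac{\pi}{2}$. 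Third, inside $C_g$ the entry points $g, h, i$ (and dually $\ol{g}, \ol{h}, \ol{i}$) are pairwise at graph-distance $2$, while inside $C_e$ each pair $\{e_k, \ol{e_k}\}$ is at distance $3$ (analogously in $C_f$). This kills the remaining candidates: the $3$-cycle of shape $\frac{\pi}{3}+2\cdot\frac{\pi}{2}$ would require a $\frac{\pi}{3}$-edge joining $e_k$ and $\ol{e_k}$, and the $4$-cycle of shape $2\cdot\frac{\pi}{3}+2\cdot\frac{\pi}{2}$ would require either a $\frac{\pi}{3}$-edge between two of $g, h, i$ (or two of $\ol{g}, \ol{h}, \ol{i}$) or a repeated vertex; none exists.

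The main technical hazard lies in the explicit computation of $\Lk(v, X_1)$: a careless accounting of edge orientations would suggest that corners at opposite vertices of a rhombus (for instance the two $\frac{\pi}{3}$-corners $K_1$ and $M_1$) produce duplicate link-edges and hence short digons, which would falsify the lemma. In fact the oriented edges meeting at these corners point in reversed senses and give genuinely distinct pairs in the link, so that all six corners of each rhombus assemble into the single $6$-cycle described above. Once this is correctly set up the three structural observations make the case analysis essentially mechanical; Gromov's link condition holds and Cartan-Hadamard yields that $\wtl{X_1}$ is CAT(0).
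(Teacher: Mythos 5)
Your argument follows the paper's proof exactly: reduce via Cartan--Hadamard and the Gromov link condition to inspecting $\Lk(v,X_1)$ at the unique vertex $v$, and your computed link --- three disjoint $\tfrac{\pi}{3}$-hexagons $C_e$, $C_f$, $C_g$ joined by twelve $\tfrac{\pi}{2}$-edges running between $C_g$ and $C_e\cup C_f$ --- agrees with the paper's Figure~\ref{link_1}. The paper dismisses the absence of injective loops shorter than $2\pi$ as clear from the picture, whereas you carry out that case analysis explicitly and correctly, so nothing is missing.
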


\begin{proof}
    By using theorems \ref{GLC} and \ref{CH} we know that it is enough to check that each injective loop in link $\Lk(v,X_1)$ has length at least $2\pi$. This link is presented in Figure \ref{link_1}. It is clear that there are no isometric cycles of length less then $2\pi$, thus $X_1$ is locally CAT(0) and $\wtl{X_1}$ is CAT(0).
\end{proof}

\begin{figure}[!ht]
    \begin{center}
    \begin{tikzpicture}[scale=1.1]
    \draw[black, thick] (-1,0)--(-0.5,-0.866)--(0.5,-0.866)--(1,0)--(0.5,0.866)--(-0.5,0.866)--(-1,0);
    \draw[black, thick] (-1.5,-0.866)--(0,-1.732)--(1.5,-0.866)--(1.5,0.866)--(0,1.732)--(-1.5,0.866)--(-1.5,-0.866);
    \draw[black, thick] (-3.232, 0)--(-1.5,-2.866)--(1.5,-2.866)--(3.232, 0)--(1.5,2.866)--(-1.5,2.866)--(-3.232, 0);

    \path[red, thick, dashdotted] (1.5,-0.866) edge[bend right=30] (0.5,0.866);
    \path[red, thick, dashdotted] (1.5,-0.866) edge[bend left=30] (-0.5,-0.866);
    \path[red, thick, dashdotted] (-1.5,-0.866) edge[bend left=30] (-0.5,0.866);
    \path[red, thick, dashdotted] (-1.5,-0.866) edge[bend right=30] (0.5,-0.866);
    \path[red, thick, dashdotted] (0,1.732) edge[bend right=30] (-1,0);
    \path[red, thick, dashdotted] (0,1.732) edge[bend left=30] (1,0);
    
    \path[red, thick, dashdotted] (-3.232,0) edge[bend right=30] (0,-1.732);
    \path[red, thick, dashdotted] (3.232,0) edge[bend left=30] (0,-1.732);
    \path[red, thick, dashdotted] (-1.5,-2.866) edge[bend left=30] (-1.5,0.866);
    \path[red, thick, dashdotted] (1.5,2.866) edge[bend right=30] (-1.5,0.866);
    \path[red, thick, dashdotted, dashdotted] (1.5,-2.866) edge[bend right=30] (1.5,0.866);
    \path[red, thick, dashdotted] (-1.5,2.866) edge[bend left=30] (1.5,0.866);
    \filldraw[black] (-1,0) circle (1.5pt) node[anchor=west]{$e_2$};
    \filldraw[black] (-0.5,-0.866) circle (1.5pt) node[anchor=south west]{$\ol{e_1}$};
    \filldraw[black] (0.5,-0.866) circle (1.5pt) node[anchor=south east]{$e_3$};
    \filldraw[black] (1,0) circle (1.5pt) node[anchor=east]{$\ol{e_2}$};
    \filldraw[black] (0.5,0.866) circle (1.5pt) node[anchor=north east]{$e_1$};
    \filldraw[black] (-0.5,0.866) circle (1.5pt) node[anchor=north west]{$\ol{e_3}$};
    \filldraw[black] (0,-1.732) circle (1.5pt) node[anchor=south]{$\ol{h}$};
    \filldraw[black] (-1.5,0.866) circle (1.5pt) node[anchor=north west]{$\ol{g}$};
    \filldraw[black] (1.5,0.866) circle (1.5pt) node[anchor=north east]{$\ol{i}$};
    \filldraw[black] (0,1.732) circle (1.5pt) node[anchor=south]{$h$};
    \filldraw[black] (1.5,-0.866) circle (1.5pt) node[anchor=north west]{$g$};
    \filldraw[black] (-1.5,-0.866) circle (1.5pt) node[anchor=north east]{$i$};
    \filldraw[black] (-3.232,0) circle (1.5pt) node[anchor=west]{$\ol{f_2}$};
    \filldraw[black] (-1.5,-2.866) circle (1.5pt) node[anchor=south west]{$f_1$};
    \filldraw[black] (1.5,-2.866) circle (1.5pt) node[anchor=south east]{$\ol{f_3}$};
    \filldraw[black] (3.232,0) circle (1.5pt) node[anchor=east]{$f_2$};
    \filldraw[black] (1.5,2.866) circle (1.5pt) node[anchor=north east]{$\ol{f_1}$};
    \filldraw[black] (-1.5,2.866) circle (1.5pt) node[anchor=north west]{$f_3$};
    \end{tikzpicture}\hspace{0.5cm}
    \begin{tikzpicture}[scale=1.1]
    \filldraw[black] (-1,0) circle (1.5pt) node[anchor=west]{$e_2'$};
    \filldraw[black] (-0.5,-0.866) circle (1.5pt) node[anchor=south west]{$\ol{e_1'}$};
    \filldraw[black] (0.5,-0.866) circle (1.5pt) node[anchor=south east]{$e_3'$};
    \filldraw[black] (1,0) circle (1.5pt) node[anchor=east]{$\ol{e_2'}$};
    \filldraw[black] (0.5,0.866) circle (1.5pt) node[anchor=north east]{$e_1'$};
    \filldraw[black] (-0.5,0.866) circle (1.5pt) node[anchor=north west]{$\ol{e_3'}$};
    \filldraw[black] (0,-1.732) circle (1.5pt) node[anchor=south]{$\ol{h'}$};
    \filldraw[black] (-1.5,0.866) circle (1.5pt) node[anchor=north west]{$\ol{g'}$};
    \filldraw[black] (1.5,0.866) circle (1.5pt) node[anchor=north east]{$\ol{i'}$};
    \filldraw[black] (0,1.732) circle (1.5pt) node[anchor=south]{$h'$};
    \filldraw[black] (1.5,-0.866) circle (1.5pt) node[anchor=west]{$g'$};
    \filldraw[black] (-1.5,-0.866) circle (1.5pt) node[anchor=north east]{$i'$};
    \filldraw[black] (-3.232,0) circle (1.5pt) node[anchor=west]{$\ol{f_2'}$};
    \filldraw[black] (-1.5,-2.866) circle (1.5pt) node[anchor=south west]{$f_1'$};
    \filldraw[black] (1.5,-2.866) circle (1.5pt) node[anchor=south east]{$\ol{f_3'}$};
    \filldraw[black] (3.232,0) circle (1.5pt) node[anchor=east]{$f_2'$};
    \filldraw[black] (1.5,2.866) circle (1.5pt) node[anchor=north east]{$\ol{f_1'}$};
    \filldraw[black] (-1.5,2.866) circle (1.5pt) node[anchor=north west]{$f_3'$};
    \draw[black, thick] (-1,0)--(-0.5,-0.866)--(0.5,-0.866)--(1,0)--(0.5,0.866)--(-0.5,0.866)--(-1,0);
    \draw[black, thick] (-1.5,-0.866)--(0,-1.732)--(1.5,-0.866)--(1.5,0.866)--(0,1.732)--(-1.5,0.866)--(-1.5,-0.866);
    \draw[black, thick] (-3.232, 0)--(-1.5,-2.866)--(1.5,-2.866)--(3.232, 0)--(1.5,2.866)--(-1.5,2.866)--(-3.232, 0);

    \path[black, thick] (1.5,-0.866) edge[bend right=30] (0.5,0.866);
    \draw[black, thick] (-1.5,-0.866)--(-1.218,0.125)--(-0.5,0.866);
    \filldraw[black] (-1.218,0.125) circle (1.5pt);
    \filldraw[white] (-1.218,0.125) circle (1pt);
    \draw[black, thick] (-0.5,-0.866)--(0.5,-1.166)--(1.5,-0.866);
    \filldraw[black] (0.5,-1.166) circle (1.5pt);
    \filldraw[white] (0.5,-1.166) circle (1pt);
    \path[black, thick] (-1.5,-0.866) edge[bend right=30] (0.5,-0.866);
    \path[black, thick] (0,1.732) edge[bend right=30] (-1,0);
    \draw[black, thick] (0,1.732)--(0.718,0.991)--(1,0);
    \filldraw[black] (0.718,0.991) circle (1.5pt);
    \filldraw[white] (0.718,0.991) circle (1pt);
    
    \path[black, thick] (-3.232,0) edge[bend right=30] (0,-1.732);
    \draw[black, thick] (3.232,0)--(1.7,-1.4)--(0,-1.732);
    \filldraw[black] (1.7,-1.4) circle (1.5pt);
    \filldraw[white] (1.7,-1.4) circle (1pt);
    \draw[black, thick] (-1.5,-2.866)--(-2,-0.866)--(-1.5,0.866);
    \filldraw[black] (-2,-0.866) circle (1.5pt);
    \filldraw[white] (-2,-0.866) circle (1pt);
    \path[black, thick] (1.5,2.866) edge[bend right=30] (-1.5,0.866);
    \path[black, thick] (1.5,-2.866) edge[bend right=30] (1.5,0.866);
    \draw[black, thick] (-1.5,2.866)--(0.25,2.198)--(1.5,0.866);
    \filldraw[black] (0.25,2.198) circle (1.5pt);
    \filldraw[white] (0.25,2.198) circle (1pt);
    \end{tikzpicture}
    \end{center}
    \caption{Left: link of $v$ in $X_1$, right: link of $v'$ in $X_1'$.}
    \label{link_1}
\end{figure}
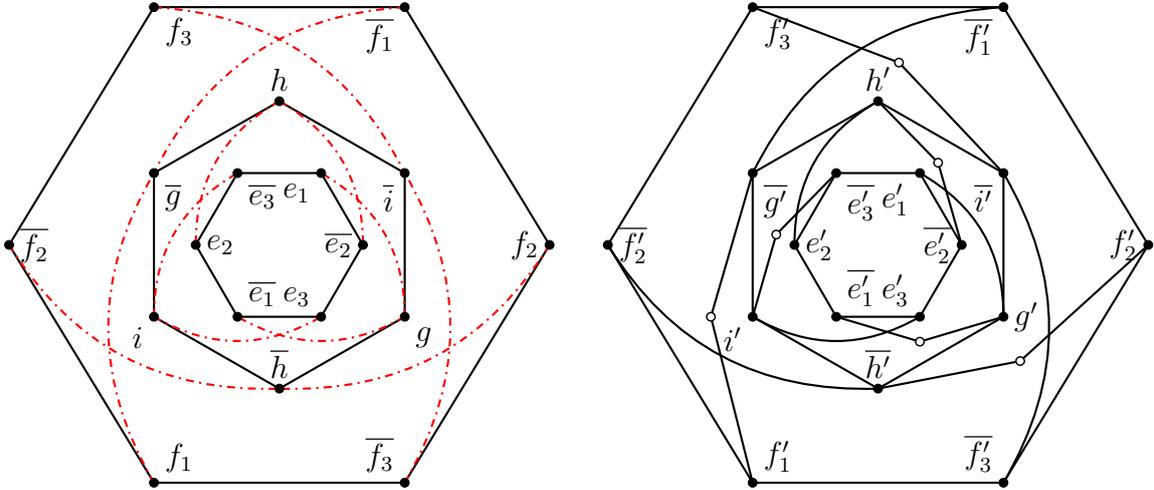

Moreover, it turns out that $X_1$ also violates both of the conjectures of Levitt and McCammond. 

\begin{lm}\label{F_embeds_into_X1}
    The radial flat $F$ embeds in the universal cover $\wtl{X_1}$.
\end{lm}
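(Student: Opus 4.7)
The plan is to apply Lemma \ref{flat_embedding}: it suffices to exhibit a cellular map $\varphi\colon F \to X_1$ that is locally isometric on each cell and induces an injective map $\Lk(w,F) \to \Lk(\varphi(w), X_1)$ at every vertex $w$ of $F$.

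After triangulating each of the three rhombi of $X_1$ along its short diagonal, $X_1$ consists of six triangles (in three pairs, one per rhombus) together with three squares, all glued at the single vertex $v$. The flat $F$ decomposes into the central hexagon of $6$ triangles meeting at $o$, six infinite square strips (one per ray), and six infinite triangular wedges filling the sectors between consecutive strips. Since $F$ has $6$-fold rotational symmetry while $X_1$ has at most $3$-fold, I will define $\varphi$ by pairing antipodal features of $F$: opposite strips will both cover the same square of $X_1$ (by periodic wrapping along the strip direction), while opposite hexagon triangles together with their adjacent wedges will populate the same rhombus of $X_1$. Once the image of each of the six central hexagon triangles is chosen, the gluing relations $e_i, f_i, g, h, i$ of $X_1$ determine the edge labels and orientations on every other cell of $F$, so well-definedness of $\varphi$ reduces to verifying that the forced labels are globally consistent -- a combinatorial bookkeeping check using periodicity of the Eisenstein tiling.

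The main step is verifying link injectivity at the four vertex types of $F$: (i) the centre $o$, whose link is a $6$-cycle of triangle corners; (ii) a hexagon corner, with a $5$-cycle formed from $2$ hexagon triangles, $2$ strip squares, and $1$ wedge triangle; (iii) a strip boundary vertex off a hexagon corner, with a $5$-cycle from $2$ squares and $3$ wedge triangles; and (iv) a wedge interior vertex, with a $6$-cycle from the Eisenstein triangulation. The $6$-fold symmetry of $F$ reduces each type to a single representative, so the verification is four finite checks that trace short cycles in $\Lk(v, X_1)$ (shown in Figure \ref{link_1}) and confirm each is simple. The main obstacle is case (ii): three distinct cell types meet with a prescribed cyclic order, and $\Lk(v, X_1)$ contains short cycles within each rhombus and each square that must be avoided. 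Arranging the pairing of opposite features of $F$ with specific rhombi and squares of $X_1$ so that every hexagon-corner $5$-cycle embeds is the crucial design constraint, and this is precisely where the specific structure of $X_1$ as constructed in this section becomes essential.
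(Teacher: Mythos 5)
Your overall strategy is the paper's: invoke Lemma \ref{flat_embedding}, specify $\varphi$ edge by edge (each cell of $X_1$ being determined by its oriented boundary edges), and verify link injectivity at the four vertex types of $F$, which you identify correctly. However, the specific design you commit to --- distributing the six central hexagon triangles among the three rhombi of $X_1$ by antipodal pairing, with opposite wedges landing in the same rhombus --- cannot be realized, and the obstruction appears already at the centre $o$, the easiest of your four cases. After subdivision, the three rhombi of $X_1$ produce triangles whose edge labels lie in the three pairwise \emph{disjoint} sets $\{e_1,e_2,e_3\}$, $\{f_1,f_2,f_3\}$ and $\{g,h,i\}$. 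Consecutive triangles of the central hexagon share a radial edge at $o$, so their images must share an edge label; hence all six central triangles are forced into a single rhombus, and no distribution among the three rhombi is possible. (Your scheme is also internally inconsistent: each wedge is vertex-adjacent to two consecutive hexagon triangles, which under antipodal pairing you would assign to different rhombi.) A further conflict arises at the strips: each square of $X_1$ carries $e_j$ on one side and $f_j$ on the opposite side, so once a strip wraps onto a square with its longitudinal edges labelled $e_j$ and $f_j$, the two wedges flanking that strip must map to \emph{different} rhombi; around the six sectors this forces strict alternation between the $e$-rhombus and the $f$-rhombus, so opposite wedges land in different rhombi --- the reverse of what you propose.

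The paper's map is the one this rigidity dictates: all edges of the central hexagon and all transverse edges of the strips are sent to $g$, $h$ or $i$ according to their direction, so the entire central hexagon maps to the rhombus $ABCD$; the longitudinal strip edges and the wedges in the three alternating sectors $E_1$ are sent to $e_1,e_2,e_3$, and those in the complementary sectors $E_2$ to $f_1,f_2,f_3$. Your observation that opposite strips cover the same square is correct and does hold for that map, but the rest of the ``antipodal'' heuristic does not: the relevant symmetry being exploited is the $3$-fold one on the wedges together with collapsing the whole hexagon onto one rhombus, not a pairing of opposite features. Finally, even setting the design aside, you explicitly defer the hexagon-corner check as ``the crucial design constraint'' without resolving it, so the argument is incomplete precisely at the point where, with your assignment, it would fail.
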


\begin{proof}
    By lemma \ref{flat_embedding} to show that $F$ embeds in $\wtl{X_1}$ it is enough to check that there is a cellular map of triangle-square complexes $\varphi\colon F\to X_1$ locally isometric on each cell such that for each vertex $w$ of $F$ the induced map $\Lk(w,F) \to \Lk(\varphi(w),X_1)$ is injective. Since each cell of $X_1$ is uniquely determined by oriented edges on its boundary, we can define such map by defining it on each edge of $F$. Such map is presented in figure \ref{flat_1_1_paper} and its formal description is presented below.
    
    Let us set the Cartesian coordinates on flat $F$ in such a way that $o$ is the center of the coordinate system and the central hexagon of $F$ has an edge parallel to the line $y=0$. Now let each edge of form $((x,y), (x-1,y))$ be mapped to an edge $g$, each edge of form $\left((x,y),(x+1/2,y+\sqrt{3}/2\right)$ be mapped to an edge $h$ and each edge of form $((x,y),(x+1/2,y-\sqrt{3}/2))$ be mapped to an edge $i$. Moreover let $E_1$ be the union of unbounded triangle regions containing points $(-1,0), (1/2, -\sqrt{3}/2)$ and $(1/2, \sqrt{3}/2)$; let each edge in $E_1$ of form $((x,y),(x,y-1))$ be mapped to $e_1$, let each edge in $E_1$ of form $((x,y),(x-\sqrt{3}/2,y+1/2))$ be mapped to $e_2$ and let each edge in $E_1$ of form $((x,y),(x+\sqrt{3}/2,y+1/2))$ be mapped to $e_3$. Finally let $E_2$ be the union of unbounded triangle regions containing points $(1,0), (-1/2, -\sqrt{3}/2)$ and $(-1/2, \sqrt{3}/2)$; let each edge in $E_2$ of form $((x,y),(x,y-1))$ be mapped to $f_1$, let each edge in $E_2$ of form $((x,y),(x-\sqrt{3}/2,y+1/2))$ be mapped to $f_2$ and let each edge in $E_2$ of form $((x,y),(x+\sqrt{3}/2,y+1/2))$ be mapped to $f_3$.
    
    \begin{figure}[!ht]
    \centering
    \includegraphics[trim=1.1cm 3.7cm 1.1cm 4.2cm, clip, width=\textwidth]{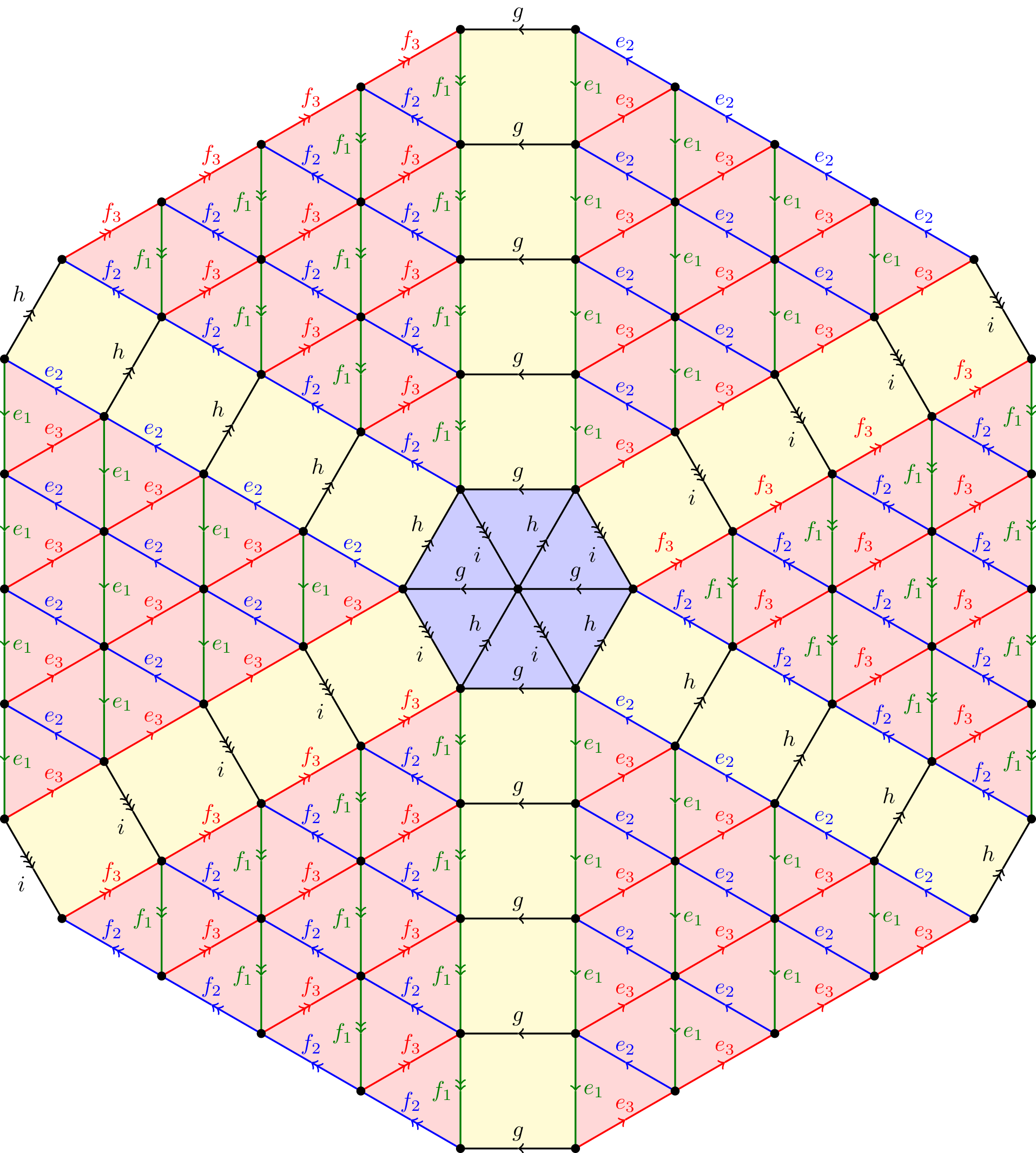}
    \caption{A part of a radial flat $F$ that embeds into $\wtl{X_1}$.}
    \label{flat_1_1_paper}
\end{figure}
    
    It is easy to check that for every vertex $w$ of $F$ the induced map of the links is injective. Therefore $F$ embeds in the universal cover $\wtl{X_1}$.

\end{proof}

\begin{lm}\label{Fn_embeds_into_X1}
    Each of the flats $F_n$ for $n\in \NN_+$ embeds in the universal cover $\wtl{X_1}$.
\end{lm}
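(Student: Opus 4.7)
The plan is to mirror Lemma \ref{F_embeds_into_X1}: construct an explicit cellular map $\varphi_n\colon F_n\to X_1$ that is a local isometry on each cell with injective induced maps on links, and then apply Lemma \ref{flat_embedding}. The only conceptual novelty compared with the $F$ case is that $F_n$ contains infinitely many hexagonal pieces, strips, and triangular regions, so the distinction between $e$-labels and $f$-labels must be extended to a globally consistent rule.

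First, introduce coordinates on $F_n$ exhibiting its Eisenstein backbone: the centres of the unit hexagons of $F_n$ sit at (a rescaled copy of) the vertices of $\mathcal{E}$, every length-$n$ strip lies along an edge of $\mathcal{E}$, and every triangular region of $n^2$ unit triangles lies over a triangle of $\mathcal{E}$. The triangles of $\mathcal{E}$ split globally into two classes, ``up'' and ``down'', alternating around every vertex of $\mathcal{E}$; use this $2$-colouring to decide which triangular regions of $F_n$ play the role of $E_1$ and which play the role of $E_2$ from the previous lemma. Define $\varphi_n$ by the same directional rules as in Lemma \ref{F_embeds_into_X1}, applied globally: any edge in one of the three hexagon-edge line directions is sent to $g$, $h$, or $i$ (by direction); any edge in one of the three perpendicular line directions lying in the closure of an up-region is sent to $e_1$, $e_2$, or $e_3$ (by direction), and similarly any such edge lying in a down-region is sent to $f_1$, $f_2$, or $f_3$. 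This rule is unambiguous: each long edge of a strip bounds exactly one triangular region (which decides $e$ versus $f$), while every interior short edge of a strip lies in one of the three hexagon-edge line directions, so no conflict arises.

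A routine check using the gluings of $X_1$ then shows that $\varphi_n$ is cellular and locally isometric on every cell. The boundary of any unit square in a strip along the $\pi/2$-, $5\pi/6$-, or $\pi/6$-line direction is, up to cyclic rotation and inversion, one of $e_1 g\ol{f_1}\ol{g}$, $e_2 h\ol{f_2}\ol{h}$, or $e_3 i\ol{f_3}\ol{i}$, matching the boundary of $P_1Q_1R_1S_1$, $P_2Q_2R_2S_2$, or $P_3Q_3R_3S_3$ respectively; and the boundary of any unit triangle is, up to cyclic rotation and inversion, one of $e_1e_2e_3$, $f_1f_2f_3$, or $ghi$, matching one of the six triangles obtained by bisecting the rhombi $K_1L_1M_1N_1$, $K_2L_2M_2N_2$, or $ABCD$. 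Preservation of edge directions and orientations then makes the restriction to each cell an isometry.

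The principal step---and the one requiring the most careful book-keeping---is to verify injectivity of $\Lk(w,F_n)\to\Lk(\varphi_n(w),X_1)$ at every vertex $w$ of $F_n$. The crucial observation is that each $w$ admits a cellular neighbourhood combinatorially and label-equivalently isomorphic to the neighbourhood of some vertex of $F$: the centre of any hexagon corresponds to the point $o$; a corner of any hexagon (where two hexagon triangles, two strip squares and one triangular-region triangle meet to total angle $2\pi$) corresponds to a corner of the central hexagon of $F$; a boundary vertex between a strip and a triangular region corresponds to its $F$-counterpart; and an interior vertex of a large triangular region corresponds to an interior vertex of one of the unbounded regions of $F$. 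Because the up/down alternation around every hexagon of $F_n$ mirrors the $E_1/E_2$ alternation around the hexagon of $F$, the labels under $\varphi_n$ agree with those used in Lemma \ref{F_embeds_into_X1}, so link-injectivity at $w$ reduces to the injectivity already established there. An application of Lemma \ref{flat_embedding} then yields the desired embedding $F_n\hookrightarrow\wtl{X_1}$.
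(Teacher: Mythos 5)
Your proposal is correct and follows the same overall strategy as the paper's proof: build an explicit cellular map $\varphi_n\colon F_n\to X_1$, reduce link-injectivity at every vertex to the verification already carried out for $F$, and invoke Lemma \ref{flat_embedding}. The only difference is how $\varphi_n$ is specified. The paper tiles $F_n$ by translates of a single polygon $D_n$ (a hexagon, three $1\times n$ rectangles and two side-$n$ triangles), realizes $D_n$ as a subcomplex of $F$, and transports the restriction of the map $\varphi$ from Lemma \ref{F_embeds_into_X1} around by the translations; this makes every local picture literally a piece of $F$, at the cost of having to check agreement on the overlaps of translates. Your global rule (hexagon-edge directions to $g,h,i$; perpendicular directions to $e_j$ or $f_j$ according to the up/down $2$-colouring of Eisenstein triangles) is automatically well defined and shifts the burden to matching each cell's boundary word with a cell of $X_1$, which you do. One point worth making explicit there: the two ways of assigning the colour classes to the $e$- and $f$-labels are not interchangeable, since a strip square with the $e_1$- and $f_1$-sides swapped has boundary word $e_1\ol{g}\,\ol{f_1}g$, which is not equivalent to $e_1 g\ol{f_1}\ol{g}$ under cyclic rotation and inversion and hence is not a cell of $X_1$. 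So the colouring must be chosen in the phase consistent with the $E_1/E_2$ pattern around the central hexagon of $F$ --- which your phrase ``decide which regions play the role of $E_1$'' implicitly does; once that choice is fixed, translation invariance and the $120^{\circ}$ rotational symmetry of the colouring make it consistent for all strips, and the rest of your argument goes through.
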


\begin{proof}
    By using the same argument as in the proof of the previous lemma it is enough to check that there is a cellular map $\varphi_n\colon F_n\to X_1$ of triangle-square complexes such that for each vertex $w$ of $F_n$ the induced map of links $\Lk(w,F_n) \to \Lk(\varphi_n(w),X_1)$ injective. Similarly to the proof of the previous lemma it is enough to define such map on all edges of $F_n$.

    Let $D_n$ be a shape consisting of an unit hexagon, three rectangles, each of which has sides of length $1$ and $n$ and two equilateral triangles with sides of length $n$ glued together in a following way: each of some three consecutive edges of the hexagon are glued respectively to three edges of length $1$ of three rectangles and two pairs of two consecutive edges of length $n$ of polygon obtained in the previous step are glued respectively to two consecutive edges of two triangles. The obtained shape is an euclidean polygon (shown for $n = 2$ in Figure \ref{polygon_D2}).

    \begin{figure}[!ht]
    \includegraphics[width = 0.29\textwidth]{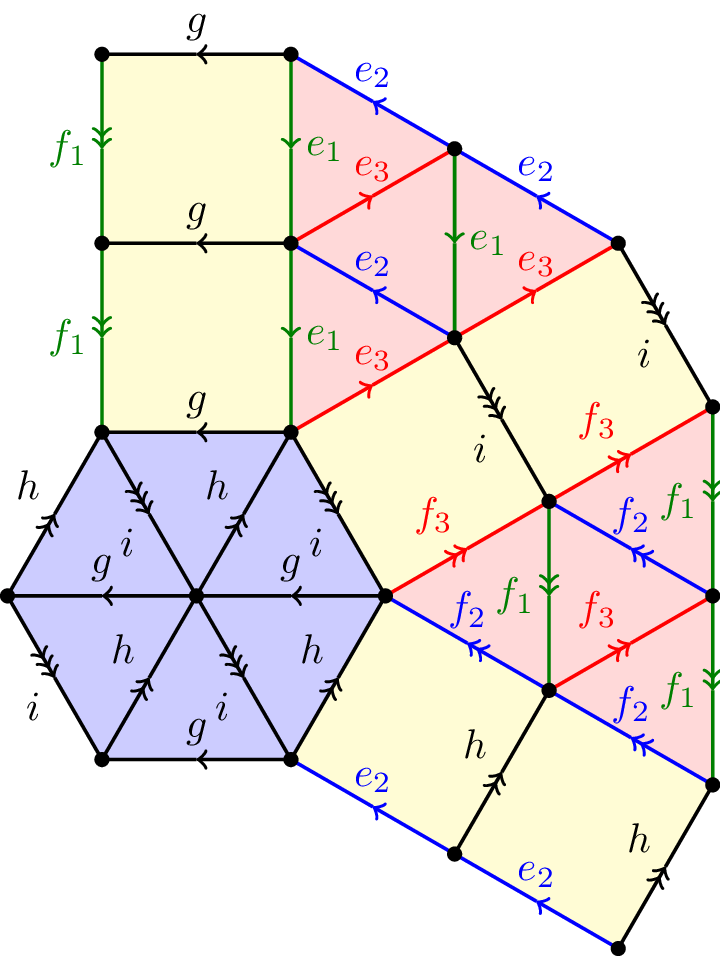}\hspace{0.5cm}
    \includegraphics[trim=1cm 8cm 1cm 8cm, clip, width=0.66\textwidth]{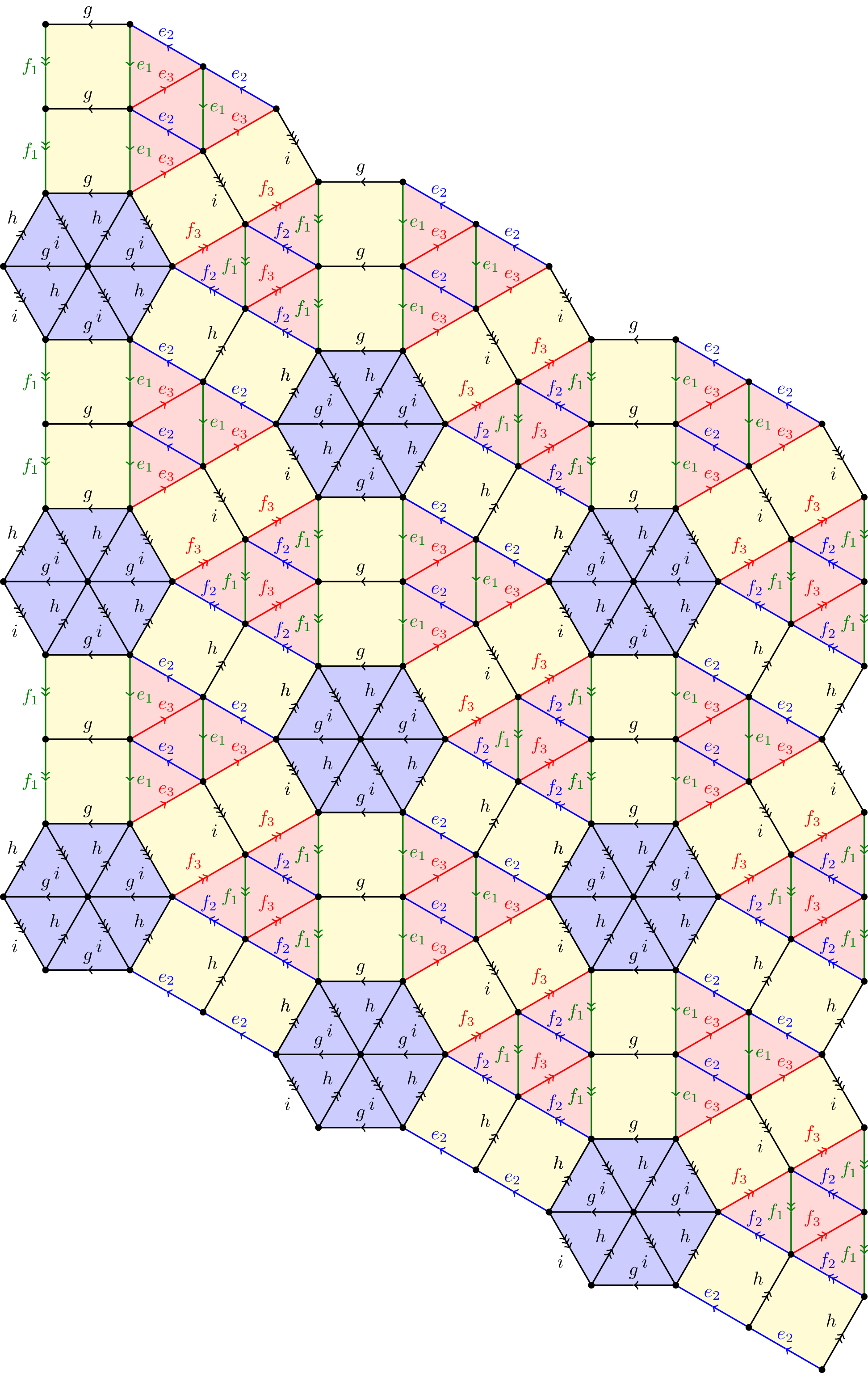}
    \caption{Left: polygon $D_2$ with a mapping $\psi_2$, right: a flat $F_2$ with a map $\varphi_2$.}
    \label{polygon_D2}
\end{figure}
    
    Note that the flat $F_n$ can be tiled by a translations of $D_n$, we will indicate the set of such translations as $\mathcal{T}$. Moreover the polygon $D_n$ as a triangle-square complex can be also obtained as a subcomplex of the flat $F$. Let us set $D_n$ to be a concrete subcomplex, now we can define a map $\psi_n$ on $D_n$ as restriction of the map $\varphi$ from the proof of the lemma \ref{F_embeds_into_X1} and the map $\varphi_n$ as 
    $$\varphi_n(x) = \psi_n(t^{-1}(x))\text{, where }t\in \mathcal{T}\text{ is such that }x\in t(D_n).$$

    The map $\varphi_n$ is presented for $n=2$ in Figure \ref{polygon_D2}. It is easy to check that such map is in fact well defined and that the induced map of the links is injective. Therefore $F_n$ embeds into the universal cover $\wtl{X_1}$.
\end{proof}

Even though the triangle-square complex $X_1$ violates both conjectures of Levitt and McCammond it turns out that $\pi_1(X_1)$ is biautomatic. We will show that we can modify the complex $X_1$ in a homeomorphic way and obtain a triangle complex $X_1'$ with systolic universal cover. 

\begin{lm}
    There is a triangle complex $X_1'$ homeomorphic to $X_1$ and such that $\wtl{X_1'}$ is systolic.
\end{lm}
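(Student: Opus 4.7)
The plan is to obtain $X_1'$ from $X_1$ by triangulating each of the three square cells. For each square $P_kQ_kR_kS_k$ ($k=1,2,3$) we add an unlabeled diagonal $d_k := Q_kS_k$, splitting the square into two triangles; together with the six triangles already produced by the rhombus subdivisions, this gives $12$ triangular $2$-cells, which we declare to be unit equilateral. Since only $1$-cells inside existing $2$-cells are added, $X_1'$ is homeomorphic to $X_1$. The resulting triangle complex has a single vertex $v'$, $12$ loop edges (the original nine plus $d_1, d_2, d_3$), and $12$ triangles.

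To show that $\wtl{X_1'}$ is systolic I combine Fact~\ref{2-dim-systolic}, Theorem~\ref{CH}, and Theorem~\ref{GLC}. Once $\wtl{X_1'}$ is shown to be a simplicial $2$-complex, its systolicity is equivalent to being CAT(0); this reduces to $X_1'$ being locally CAT(0), which in turn reduces to the link $\Lk(v', X_1')$ having no injective loop of length less than $2\pi$.

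The link $\Lk(v', X_1')$ is the graph drawn in the right panel of Figure~\ref{link_1}. The three subdivided rhombi contribute three disjoint hexagons of $\pi/3$-edges (inner, middle, outer), just as in $\Lk(v, X_1)$. The square subdivision by $d_k$ replaces the four $\pi/2$-edges at the corners of square $k$: at $P_k$ and $R_k$ (each in only one new triangle) the original edge becomes a single direct $\pi/3$-edge; at $Q_k$ and $S_k$ (each in both new triangles) it is replaced by a path of length two through a new hollow vertex $d_k$ or $\ol{d_k}$. The core step is the girth computation: a cycle contained in one hexagon has length exactly $6$, and any other cycle must use at least two of the twelve non-hexagon connections. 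A short case analysis, keyed on the alternation between "$g', h', i'$" (inner-connected) and "$\ol{g'}, \ol{h'}, \ol{i'}$" (outer-connected) vertices in the middle hexagon, shows that every such cycle has length at least $6$. The most delicate case is ruling out cycles of length $5$ combining one direct crossing, one length-two crossing, and short hexagon arcs; this is the main obstacle, and it is resolved by the observation that the two inner-hexagon neighbours of any middle vertex $g', h', i'$ are antipodal in the inner hexagon (forcing the connecting inner arc to have length at least $3$), and symmetrically for the outer hexagon.

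Once local CAT(0) is established, Theorem~\ref{CH} gives $\wtl{X_1'}$ CAT(0). Simpliciality of $\wtl{X_1'}$ then follows from uniqueness of geodesics: two distinct lifted edges with the same endpoints would form a bigon violating uniqueness of CAT(0) geodesics between its endpoints, and likewise two distinct lifted triangles cannot share all three vertices. Fact~\ref{2-dim-systolic} then delivers the desired systolicity of $\wtl{X_1'}$.
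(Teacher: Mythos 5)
Your construction is the same as the paper's: adding the diagonal $Q_kS_k$ and declaring both halves equilateral is exactly the paper's replacement of each square by a $\frac{\pi}{3}$-rhombus split along $S_k'Q_k'$, and your description of the resulting link (direct $\frac{\pi}{3}$-edges at the corners $P_k, R_k$, length-two paths through hollow vertices at $Q_k, S_k$) matches the right panel of Figure~\ref{link_1}, with the girth-$6$ verification via the antipodality of $e_k'$ and $\ol{e_k'}$ in the inner hexagon being correct. The paper simply asserts that the link condition ``is clear,'' so your argument --- including the remark on simpliciality of $\wtl{X_1'}$, which the paper leaves implicit --- is, if anything, slightly more complete.
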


\begin{proof}
    Again, we will directly construct the complex $X_1'$, presented in the figure \ref{example_1_systolization}. Let $X_1'$ be a space consisting of six rhombi $K_1'L_1'M_1'N_1'$, $K_2'L_2'M_2'N_2'$, $A'B'C'D'$, $P_1'Q_1'R_1'S_1'$, $P_2'Q_2'R_2'S_2'$ and $P_3'Q_3'R_3'S_3'$ with sides of unit length and such that $\sphericalangle N_1'K_1'L_1' = \sphericalangle N_2'K_2'L_2' = \sphericalangle A'B'C' = \sphericalangle S_1'P_1'Q_1' = \sphericalangle S_2'P_2'Q_2' = \sphericalangle S_3'P_3'Q_3' = \frac{\pi}{3}$ glued together in such a way:
    \begin{align*}
        e_1':M_1'L_1'\sim N_1'K_1'\sim P_1'Q_1', && e_2':K_1'L_1'\sim N_1'M_1'\sim P_2'Q_2', && e_3':L_1'N_1'\sim P_3'Q_3',
    \end{align*}
    \begin{align*}
        f_1':L_2'K_2'\sim M_2'N_2'\sim S_1'R_1', && f_2':L_2'M_2'\sim K_2'N_2'\sim S_2'R_2', && f_3':N_2'L_2'\sim S_3'R_3',
    \end{align*}
    \begin{align*}
        g':B'A'\sim C'D'\sim P_1'S_1' \sim Q_1'R_1', && h':C'B' \sim D'A' \sim P_2'S_2' \sim Q_2'R_2',
    \end{align*}
    \begin{align*}
        i':A'C'\sim P_3'S_3' \sim Q_3'R_3'.
    \end{align*}
    \begin{figure}[!ht]
    \begin{center}
\begin{tikzpicture}
\fill[fill=red!15] (6,0.75)--(7.732,1.75)--(6,2.75);
\fill[fill=red!15] (6,0.75)--(4.268,1.75)--(6,2.75);
\draw[red, thick, ->] (6,0.75) -- (6,1.75);
\draw[red, thick] (6,1.75) -- (6,2.75);
\draw[black!50!green, thick, ->] (7.732,1.75) -- (6.866,1.25);
\draw[black!50!green, thick] (6,0.75) -- (6.866,1.25);
\draw[blue, thick, ->] (6,2.75) -- (6.866,2.25);
\draw[blue, thick] (7.732,1.75) -- (6.866,2.25);
\draw[blue, thick, ->] (4.268,1.75) -- (5.134,1.25);
\draw[blue, thick] (6,0.75) -- (5.134,1.25);
\draw[black!50!green, thick, ->] (6,2.75) -- (5.134,2.25);
\draw[black!50!green, thick] (5.134,2.25) -- (4.268,1.75);
\filldraw[black!50!green] (5.134,2.25) circle (0pt) node[anchor=south east]{$e_1'$};
\filldraw[black!50!green] (6.866,1.25) circle (0pt) node[anchor=north west]{$e_1'$};
\filldraw[blue] (6.866,2.25) circle (0pt) node[anchor=south west]{$e_2'$};
\filldraw[blue] (5.134,1.25) circle (0pt) node[anchor=north east]{$e_2'$};
\filldraw[red] (6,1.75) circle (0pt) node[anchor=east]{$e_3'$};
\filldraw[black] (7.732,1.75) circle (2pt) node[anchor=west]{$M_1'$};
\filldraw[black] (4.268,1.75) circle (2pt) node[anchor=east]{$K_1'$};
\filldraw[black] (6,0.75) circle (2pt) node[anchor=north]{$L_1'$};
\filldraw[black] (6,2.75) circle (2pt) node[anchor=south]{$N_1'$};

\fill[fill=red!15] (6,-0.75)--(7.732,-1.75)--(6,-2.75);
\fill[fill=red!15] (6,-0.75)--(4.268,-1.75)--(6,-2.75);
\draw[red, thick, ->>] (6,-0.75) -- (6,-1.75);
\draw[red, thick] (6,-1.75) -- (6,-2.75);
\draw[black!50!green, thick, ->>] (7.732,-1.75) -- (6.866,-1.25);
\draw[black!50!green, thick] (6,-0.75) -- (6.866,-1.25);
\draw[blue, thick, ->>] (6,-2.75) -- (6.866,-2.25);
\draw[blue, thick] (7.732,-1.75) -- (6.866,-2.25);
\draw[blue, thick, ->>] (4.268,-1.75) -- (5.134,-1.25);
\draw[blue, thick] (6,-0.75) -- (5.134,-1.25);
\draw[black!50!green, thick, ->>] (6,-2.75) -- (5.134,-2.25);
\draw[black!50!green, thick] (5.134,-2.25) -- (4.268,-1.75);
\filldraw[black!50!green] (5.134,-2.25) circle (0pt) node[anchor=north east]{$f_1'$};
\filldraw[black!50!green] (6.866,-1.25) circle (0pt) node[anchor=south west]{$f_1'$};
\filldraw[blue] (6.866,-2.25) circle (0pt) node[anchor=north west]{$f_2'$};
\filldraw[blue] (5.134,-1.25) circle (0pt) node[anchor=south east]{$f_2'$};
\filldraw[red] (6,-1.75) circle (0pt) node[anchor=east]{$f_3'$};
\filldraw[black] (7.732,-1.75) circle (2pt) node[anchor=west]{$M_2'$};
\filldraw[black] (4.268,-1.75) circle (2pt) node[anchor=east]{$K_2'$};
\filldraw[black] (6,-0.75) circle (2pt) node[anchor=south]{$N_2'$};
\filldraw[black] (6,-2.75) circle (2pt) node[anchor=north]{$L_2'$};

\fill[fill=blue!20] (8,0)--(9,1.732)--(10,0)--(9,-1.732);

\draw[black, thick, ->>>] (8,0)--(9,0);
\draw[black, thick] (9,0)--(10,0);
\filldraw[black] (9,0) circle (0pt) node[anchor=south]{$i'$};

\draw[black, thick, ->>] (9,1.732)--(8.5,0.866);
\draw[black, thick] (8,0)--(8.5,0.866);
\draw[black, thick] (9,-1.732)--(9.5,-0.866);
\draw[black, thick, ->>] (10,0)--(9.5,-0.866);
\filldraw[black] (8.5,0.866) circle (0pt) node[anchor=south east]{$h'$};
\filldraw[black] (9.5,-0.866) circle (0pt) node[anchor=north west]{$h'$};

\draw[black, thick] (8,0)--(8.5,-0.866);
\draw[black, thick, ->] (9,-1.732)--(8.5,-0.866);
\draw[black, thick] (9.5,0.866)--(9,1.732);
\draw[black, thick, ->] (10,0)--(9.5,0.866);
\filldraw[black] (8.5,-0.866) circle (0pt) node[anchor=north east]{$g'$};
\filldraw[black] (9.5,0.866) circle (0pt) node[anchor=south west]{$g'$};

\filldraw[black] (10,0) circle (2pt) node[anchor=west]{$C'$};
\filldraw[black] (9,1.732) circle (2pt) node[anchor=south]{$D'$};
\filldraw[black] (8, 0) circle (2pt) node[anchor=east]{$A'$};
\filldraw[black] (9,-1.732) circle (2pt) node[anchor=north]{$B'$};

\fill[fill=yellow!20] (11,0.75)--(13,0.75)--(14,2.482)--(12,2.482);
\draw[black, thick, ->] (11,0.75)--(11.5,1.616);
\draw[black, thick] (11.5,1.616)--(12,2.482);
\filldraw[black] (11.5,1.616) circle (0pt) node[anchor=east]{$g'$};
\draw[black, thick, ->] (13,0.75)--(13.5,1.616);
\draw[black, thick] (13.5,1.616)--(14,2.482);
\filldraw[black] (13.5,1.616) circle (0pt) node[anchor=west]{$g'$};
\draw[black!50!green, thick, ->] (11,0.75)--(12,0.75);
\draw[black!50!green, thick] (12,0.75)--(13,0.75);
\filldraw[black!50!green] (12,0.75) circle (0pt) node[anchor=north]{$e_1'$};
\draw[black!50!green, thick, ->>] (12,2.482)--(13,2.482);
\draw[black!50!green, thick] (13,2.482)--(14,2.482);
\filldraw[black!50!green] (13,2.482) circle (0pt) node[anchor=south]{$f_1'$};
\draw[black!50, thick, dashed] (12,2.482)--(13,0.75);
\filldraw[black] (11,0.75) circle (2pt) node[anchor=north east]{$P_1'$};
\filldraw[black] (13,0.75) circle (2pt) node[anchor=north west]{$Q_1'$};
\filldraw[black] (14,2.482) circle (2pt) node[anchor=south west]{$R_1'$};
\filldraw[black] (12,2.482) circle (2pt) node[anchor=south east]{$S_1'$};

\fill[fill=yellow!20] (11,-0.75)--(13,-0.75)--(12,-2.482)--(10,-2.482);
\draw[black, thick, ->>] (10,-2.482)--(10.5,-1.616);
\draw[black, thick] (10.5,-1.616)--(11,-0.75);
\filldraw[black] (10.5,-1.616) circle (0pt) node[anchor=east]{$h'$};
\draw[black, thick, ->>] (12,-2.482)--(12.5,-1.616);
\draw[black, thick] (12.5,-1.616)--(13,-0.75);
\filldraw[black] (12.5,-1.616) circle (0pt) node[anchor=west]{$h'$};
\draw[blue, thick, ->] (10,-2.482)--(11,-2.482);
\draw[blue, thick] (11,-2.482)--(12,-2.482);
\filldraw[blue] (11,-2.482) circle (0pt) node[anchor=north]{$e_2'$};
\draw[blue, thick, ->>] (11,-0.75)--(12,-0.75);
\draw[blue, thick] (12,-0.75)--(13,-0.75);
\draw[black!50 ,thick, dashed] (11,-0.75)--(12,-2.482);
\filldraw[blue] (12,-0.75) circle (0pt) node[anchor=south]{$f_2'$};
\filldraw[black] (10,-2.482) circle (2pt) node[anchor=north east]{$P_2'$};
\filldraw[black] (12,-2.482) circle (2pt) node[anchor=north west]{$Q_2'$};
\filldraw[black] (13,-0.75) circle (2pt) node[anchor=south west]{$R_2'$};
\filldraw[black] (11,-0.75) circle (2pt) node[anchor=south east]{$S_2'$};

\fill[fill=yellow!20] (14,-0.866)--(16,-0.866)--(17,0.866)--(15,0.866);
\draw[black, thick, ->>>] (14,-0.866)--(14.5,0);
\draw[black, thick] (14.5,0)--(15,0.866);
\filldraw[black] (14.5,0) circle (0pt) node[anchor=east]{$i'$};
\draw[black, thick, ->>>] (16,-0.866)--(16.5,0);
\draw[black, thick] (16.5,0)--(17,0.866);
\filldraw[black] (16.5,0) circle (0pt) node[anchor=west]{$i'$};
\draw[red, thick, ->] (14,-0.866)--(15,-0.866);
\draw[red, thick] (15,-0.866)--(16,-0.866);
\filldraw[red] (15,-0.866) circle (0pt) node[anchor=north]{$e_3'$};
\draw[red, thick, ->>] (15,0.866)--(16,0.866);
\draw[red, thick] (16,0.866)--(17,0.866);
\draw[black!50, thick, dashed] (16,-0.866)--(15,0.866);
\filldraw[red] (16,0.866) circle (0pt) node[anchor=south]{$f_3'$};
\filldraw[black] (14,-0.866) circle (2pt) node[anchor=north east]{$P_3'$};
\filldraw[black] (16,-0.866) circle (2pt) node[anchor=north west]{$Q_3'$};
\filldraw[black] (17,0.866) circle (2pt) node[anchor=south west]{$R_3'$};
\filldraw[black] (15,0.866) circle (2pt) node[anchor=south east]{$S_3'$};

\end{tikzpicture}
\end{center}
    \caption{The polyhedral complex $X_1'$}
    \label{example_1_systolization}
\end{figure}

    Note that the three rhombi $K_1'L_1'M_1'N_1', K_2'L_2'M_2'N_2', A'B'C'D'$ are the same as the corresponding rhombi in the construction of a polyhedral complex $X_1$ and all of the identifications of edges are also the same as in the construction of $X_1$. Thus the whole complex $X_1'$ is constructed by replacing the squares in the construction of $X_1$ by the corresponding rhombi. Such rhombi are homeomorphic to the squares we are replacing, therefore the complexes $X_1$ and $X_1'$ are also homeomorphic. Since each of the rhombi of the complex described above can be obtained by gluing two unit equilateral triangles then $X_1'$ is actually a triangle complex with a single vertex $v'$. Now it is enough to say that $\wtl{X_1'}$ is systolic. By using the fact \ref{2-dim-systolic} and theorems \ref{GLC}, \ref{CH} we can do so by showing that there are no isometric cycles of length less than $2\pi$ inside link $\Lk(v',X_1')$. This link is presented in Figure \ref{link_1}. It is clear that there are no isometric cycles of length less than $2\pi$ in $\Lk(v',X_1')$, therefore $\wtl{X_1'}$ is systolic.
\end{proof}

From the theorem \ref{systolic-biautomatic} we can now conclude that the fundamental group $\pi_1(X_1) \simeq \pi_1(X_1')$ is biautomatic.

\section{Second example}\label{second_example}

Let $X_2$ be a complex presented in the figure \ref{example_2}, that is consisting of a unit hexagon $ABCDEF$, three unit squares $P_1Q_1R_1S_1$, $P_2Q_2R_2S_2$ and $P_3Q_3R_3S_3$ and two unit equilateral triangles $K_1L_1M_1$, $K_2L_2M_2$ glued together in such a way that the following groups of oriented edges are identified (see Figure \ref{example_2}):
$$f_1:AF\sim DC\sim S_1P_1\sim Q_1R_1,$$ 
$$f_2:CB\sim ED\sim S_2P_2\sim Q_2R_2,$$ 
$$f_3:BA\sim FE\sim S_3P_3\sim Q_3R_3,$$ 
$$e_1:P_1Q_1\sim S_1R_1\sim K_1M_1\sim K_2M_2,$$
$$e_2:P_2Q_2\sim S_2R_2\sim M_1L_1\sim L_2K_2,$$
$$e_3:P_3Q_3\sim S_3R_3\sim L_1K_1\sim M_2L_2.$$
Such polyhedral complex has only one vertex $v$, shown in Figure \ref{example_2} as a black dot. Note that since a regular hexagon can be obtained by gluing six equilateral triangles, then $X_2$ can be treated as a triangle-square complex built out of $8$ triangles and $3$ squares.

\begin{figure}[!h]
    \centering
\begin{tikzpicture}
\fill[fill=yellow!20] (0,0)--(2,0)--(2,2)--(0,2);
\fill[fill=yellow!20] (0,3.5)--(2,3.5)--(2,5.5)--(0,5.5);
\fill[fill=yellow!20] (-3.5,1.75)--(-1.5,1.75)--(-1.5,3.75)--(-3.5,3.75);
\draw[blue, thick, ->] (0,0)--(1,0);
\draw[blue, thick] (1,0)--(2,0);
\filldraw[blue] (1,0) circle (0pt) node[anchor=south]{$e_1$};
\draw[blue, thick, ->] (0,2)--(1,2);
\draw[blue, thick] (1,2)--(2,2);
\filldraw[blue] (1,2) circle (0pt) node[anchor=north]{$e_1$};
\draw[black!50!green, thick, ->] (0,2)--(0,1);
\draw[black!50!green, thick] (0,1)--(0,0);
\filldraw[black!50!green] (0,1) circle (0pt) node[anchor=east]{$f_1$};
\draw[black!50!green, thick, ->] (2,0)--(2,1);
\draw[black!50!green, thick] (2,1)--(2,2);
\filldraw[black!50!green] (2,1) circle (0pt) node[anchor=west]{$f_1$};

\draw[blue, thick, ->>] (0,3.5)--(1,3.5);
\draw[blue, thick] (1,3.5)--(2,3.5);
\filldraw[blue] (1,3.5) circle (0pt) node[anchor=south]{$e_2$};
\draw[blue, thick, ->>] (0,5.5)--(1,5.5);
\draw[blue, thick] (1,5.5)--(2,5.5);
\filldraw[blue] (1,5.5) circle (0pt) node[anchor=north]{$e_2$};
\draw[black!50!green, thick, ->>] (0,5.5)--(0,4.5);
\draw[black!50!green, thick] (0,4.5)--(0,3.5);
\filldraw[black!50!green] (0,4.5) circle (0pt) node[anchor=east]{$f_2$};
\draw[black!50!green, thick, ->>] (2,3.5)--(2,4.5);
\draw[black!50!green, thick] (2,4.5)--(2,5.5);
\filldraw[black!50!green] (2,4.5) circle (0pt) node[anchor=west]{$f_2$};

\draw[blue, thick, ->>>] (-3.5,1.75)--(-2.5,1.75);
\draw[blue, thick] (-2.5,1.75)--(-1.5,1.75);
\filldraw[blue] (-2.5,1.75) circle (0pt) node[anchor=south]{$e_3$};
\draw[blue, thick, ->>>] (-3.5,3.75)--(-2.5,3.75);
\draw[blue, thick] (-2.5,3.75)--(-1.5,3.75);
\filldraw[blue] (-2.5,3.75) circle (0pt) node[anchor=north]{$e_3$};
\draw[black!50!green, thick, ->>>] (-3.5,3.75)--(-3.5,2.75);
\draw[black!50!green, thick] (-3.5,2.75)--(-3.5,1.75);
\filldraw[black!50!green] (-3.5,2.75) circle (0pt) node[anchor=east]{$f_3$};
\draw[black!50!green, thick, ->>>] (-1.5,1.75)--(-1.5,2.75);
\draw[black!50!green, thick] (-1.5,3.75)--(-1.5,2.75);
\filldraw[black!50!green] (-1.5,2.75) circle (0pt) node[anchor=west]{$f_3$};

\fill[fill=blue!20] (4.5,1.018)--(6.5,1.018)--(7.5,2.75)--(6.5,4.482)--(4.5,4.482)--(3.5,2.75);
\draw[black!50, thick, dashed] (4.5,1.018)--(6.5,4.482);
\draw[black!50, thick, dashed] (6.5,1.018)--(4.5,4.482);
\draw[black!50, thick, dashed] (3.5,2.75)--(7.5,2.75);
\draw[black!50!green, thick, ->] (4.5,4.482)--(5.5,4.482);
\draw[black!50!green, thick] (5.5,4.482)--(6.5,4.482);
\filldraw[black!50!green] (5.5,4.482) circle (0pt) node[anchor=south]{$f_1$};
\draw[black!50!green, thick, ->>] (6.5,4.482)--(7,3.616);
\draw[black!50!green, thick] (7,3.616)--(7.5,2.75);
\filldraw[black!50!green] (7,3.616) circle (0pt) node[anchor=south west]{$f_2$};
\draw[black!50!green, thick, ->>>] (7.5,2.75)--(7,1.884);
\draw[black!50!green, thick] (7,1.884)--(6.5,1.018);
\filldraw[black!50!green] (7,1.884) circle (0pt) node[anchor=north west]{$f_3$};
\draw[black!50!green, thick, ->] (6.5,1.018)--(5.5,1.018);
\draw[black!50!green, thick] (5.5,1.018)--(4.5,1.018);
\filldraw[black!50!green] (5.5,1.018) circle (0pt) node[anchor=north]{$f_1$};
\draw[black!50!green, thick, ->>>] (4.5,1.018)--(4,1.884);
\draw[black!50!green, thick] (4,1.884)--(3.5,2.75);
\filldraw[black!50!green] (4,1.884) circle (0pt) node[anchor=north east]{$f_3$};
\draw[black!50!green, thick, ->>] (3.5,2.75)--(4,3.616);
\draw[black!50!green, thick] (4,3.616)--(4.5,4.482);
\filldraw[black!50!green] (4,3.616) circle (0pt) node[anchor=south east]{$f_2$};


\fill[fill=red!15] (9,0)--(10.732,1)--(9,2);
\fill[fill=red!15] (9,3.5)--(10.732,4.5)--(9,5.5);
\draw[blue, thick, ->] (9,0)--(9,1);
\draw[blue, thick] (9,2)--(9,1);
\filldraw[blue] (9,1) circle (0pt) node[anchor=east]{$e_1$};
\draw[blue, thick, ->>] (9,2)--(9.866,1.5);
\draw[blue, thick] (9.866,1.5)--(10.732,1);
\filldraw[blue] (9.866,1.5) circle (0pt) node[anchor=south west]{$e_2$};
\draw[blue, thick, ->>>] (10.732,1)--(9.866,0.5);
\draw[blue, thick] (9.866,0.5)--(9,0);
\filldraw[blue] (9.866,0.5) circle (0pt) node[anchor=north west]{$e_3$};

\draw[blue, thick, ->] (9,3.5)--(9,4.5);
\draw[blue, thick] (9,5.5)--(9,4.5);
\filldraw[blue] (9,4.5) circle (0pt) node[anchor=east]{$e_1$};
\draw[blue, thick, ->>>] (9,5.5)--(9.866,5);
\draw[blue, thick] (9.866,5)--(10.732,4.5);
\filldraw[blue] (9.866,5) circle (0pt) node[anchor=south west]{$e_3$};
\draw[blue, thick, ->>] (10.732,4.5)--(9.866,4);
\draw[blue, thick] (9.866,4)--(9,3.5);
\filldraw[blue] (9.866,4) circle (0pt) node[anchor=north west]{$e_2$};


\filldraw[black] (0,0) circle (2pt) node[anchor=north east]{$P_1$};
\filldraw[black] (2,0) circle (2pt) node[anchor=north west]{$Q_1$};
\filldraw[black] (2,2) circle (2pt) node[anchor=south west]{$R_1$};
\filldraw[black] (0,2) circle (2pt) node[anchor=south east]{$S_1$};

\filldraw[black] (0,3.5) circle (2pt) node[anchor=north east]{$P_2$};
\filldraw[black] (2,3.5) circle (2pt) node[anchor=north west]{$Q_2$};
\filldraw[black] (2,5.5) circle (2pt) node[anchor=south west]{$R_2$};
\filldraw[black] (0,5.5) circle (2pt) node[anchor=south east]{$S_2$};

\filldraw[black] (-3.5,1.75) circle (2pt) node[anchor=north east]{$P_3$};
\filldraw[black] (-1.5,1.75) circle (2pt) node[anchor=north west]{$Q_3$};
\filldraw[black] (-1.5,3.75) circle (2pt) node[anchor=south west]{$R_3$};
\filldraw[black] (-3.5,3.75) circle (2pt) node[anchor=south east]{$S_3$};

\filldraw[black] (4.5,1.018) circle (2pt) node[anchor=north east]{$F$};
\filldraw[black] (6.5,1.018) circle (2pt) node[anchor=north west]{$A$};
\filldraw[black] (7.5,2.75) circle (2pt) node[anchor=west]{$B$};
\filldraw[black] (6.5,4.482) circle (2pt) node[anchor=south west]{$C$};
\filldraw[black] (4.5,4.482) circle (2pt) node[anchor=south east]{$D$};
\filldraw[black] (3.5,2.75) circle (2pt) node[anchor=east]{$E$};

\filldraw[black] (9,0) circle (2pt) node[anchor=north]{$K_1$};
\filldraw[black] (10.732,1) circle (2pt) node[anchor=west]{$L_1$};
\filldraw[black] (9,2) circle (2pt) node[anchor=south]{$M_1$};

\filldraw[black] (9,3.5) circle (2pt) node[anchor=north]{$K_2$};
\filldraw[black] (10.732,4.5) circle (2pt) node[anchor=west]{$L_2$};
\filldraw[black] (9,5.5) circle (2pt) node[anchor=south]{$M_2$};

\end{tikzpicture}

\caption{The polyhedral complex $X_2$.}
\label{example_2}
\end{figure}
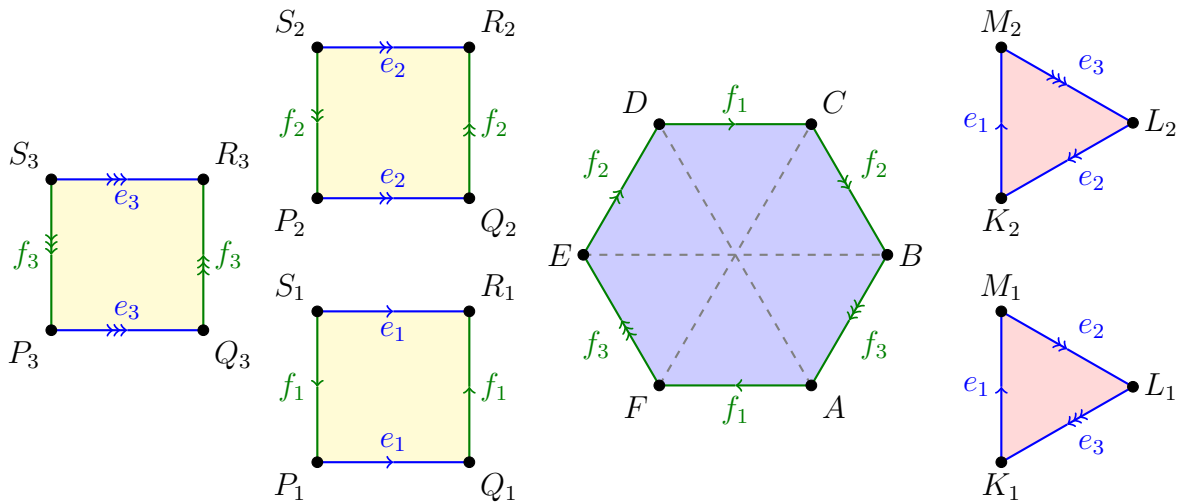

\begin{lm}
    The universal cover $\wtl{X_2}$ is CAT(0). Moreover, if we replace all the squares in construction of $X_2$ with rhombi having the smaller internal angle equal to $\frac{\pi}{3}$, then the universal cover $\wtl{X_2'}$ of polyhedral complex $X_2'$ obtained in such a way fails to be CAT(0).
\end{lm}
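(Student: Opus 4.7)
The approach is to apply the Gromov link condition (Theorem \ref{GLC}) together with the Cartan--Hadamard theorem (Theorem \ref{CH}), just as in the analysis of $X_1$. Since $X_2$ has a single vertex $v$ in its coarsest cell structure, and subdividing the hexagon into six equilateral triangles introduces only one extra central vertex whose link is the standard $6$-cycle of length $2\pi$, the problem reduces to verifying that every injective loop in $\Lk(v, X_2)$ has length at least $2\pi$ for the first assertion, and to producing one of length strictly less than $2\pi$ in $\Lk(v, X_2')$ for the second.

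For the first assertion, I describe $\Lk(v, X_2)$ explicitly. It has twelve vertices corresponding to the half-edges $e_i, \ol{e_i}, f_i, \ol{f_i}$ for $i = 1, 2, 3$. Tracking the boundary words of the two triangles and of the hexagon, one verifies that the six triangle corners contribute length-$\pi/3$ edges forming the bipartite $6$-cycle $e_1 - \ol{e_2} - e_3 - \ol{e_1} - e_2 - \ol{e_3} - e_1$ of total length $2\pi$ on the $e$-vertices, while the six hexagon corners produce an analogous $6$-cycle of length $4\pi$ on the $f$-vertices. Each square contributes four length-$\pi/2$ edges joining an $e$-vertex to an $f$-vertex, forming a $4$-cycle of length $2\pi$ on the four vertices $\set{e_i, \ol{e_i}, f_i, \ol{f_i}}$. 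Since any injective loop in the link crosses the partition of vertices into $e$-type and $f$-type an even number of times, it contains an even number of square edges. Four or more square edges already account for length at least $2\pi$. Exactly two square edges contribute $\pi$; if they come from different squares, they use disjoint vertex sets and the remaining portion of the loop must include a path in the $e$-subgraph (length at least $\pi/3$) and a path in the $f$-subgraph (length at least $2\pi/3$), again summing to at least $2\pi$. The case where both square edges come from the same square reduces to inspecting its $4$-cycle, and a brief analysis shows that the total length is again at least $2\pi$. Thus $\wtl{X_2}$ is CAT(0).

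For the second assertion, replacing each square by a rhombus with smaller angle $\pi/3$, and fixing for concreteness the narrow corners of rhombus $i$ at $P_i$ and $R_i$, produces link edges $\set{e_i, \ol{f_i}}$ and $\set{\ol{e_i}, \ol{f_i}}$ of length $\pi/3$ (while the remaining corners $Q_i$ and $S_i$ contribute edges of length $2\pi/3$). Combining the two narrow corners of rhombus $1$ with the triangle corners at $M_1$, $L_2$, and $K_1$, I obtain the injective loop
\[
e_1 \;\to\; \ol{f_1} \;\to\; \ol{e_1} \;\to\; e_2 \;\to\; \ol{e_3} \;\to\; e_1
\]
with all five edges of length $\pi/3$ and total length $5\pi/3 < 2\pi$. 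Hence Gromov's condition fails and $\wtl{X_2'}$ is not CAT(0).

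The main technical obstacle is the bookkeeping of half-edge labels in the link: these depend sensitively on the orientations chosen in the gluing rules, and one must carefully track whether an initial or terminal half-edge appears at each corner, lest phantom short cycles arise by confusing the two. Once the link is correctly described, the bipartite $e$/$f$-splitting makes both computations clean and essentially mechanical.
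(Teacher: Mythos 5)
Your proof follows the same route as the paper's: the Gromov link condition combined with Cartan--Hadamard, an explicit description of $\Lk(v,X_2)$, and an explicit short injective loop once the squares are deformed into rhombi. Your description of the link is accurate --- the two triangles give a hexagonal cycle of length $2\pi$ on the six $e$-type vertices, the subdivided hexagon gives a cycle of length $4\pi$ on the six $f$-type vertices (through degree-two hollow vertices coming from the radial edges of the subdivision), and each square contributes a quadrilateral of length $2\pi$ joining $e_i,\ol{e_i}$ to $f_i,\ol{f_i}$ --- and your parity argument actually proves the no-short-loops assertion that the paper dismisses as ``easy to see''.

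Two points need attention. First, your case analysis omits loops containing zero square edges; such a loop lies entirely in the $e$-subgraph or in the $f$-subgraph, each of which is a single circle (of length $2\pi$ and $4\pi$ respectively), so the case is immediate, but it should be stated. Second, and more substantively, there are $2^3=8$ complexes $X_2'$, depending on which diagonal of each square is shortened, and the lemma (as the paper's proof makes explicit by examining all eight links) requires every one of them to violate the link condition; you fix a single choice ``for concreteness''. The repair is short: whichever pair of opposite corners of the first rhombus is narrow, its two $\pi/3$-edges join $e_1$ and $\ol{e_1}$ to a common $f_1$-type vertex ($\ol{f_1}$ for the corners $P_1,R_1$, and $f_1$ for $Q_1,S_1$), and since $e_1$ and $\ol{e_1}$ are antipodal in the length-$2\pi$ triangle cycle, closing up through three triangle corners always yields an injective loop of length $5\pi/3<2\pi$, independently of the choices made for the other two squares. (Your value $5\pi/3$ is the correct one; the ``$\frac{5\pi}{6}$'' appearing in the paper's proof is a typo.) With these two additions your argument is complete.
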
 

\begin{proof}
From the theorems \ref{GLC} and \ref{CH} we know that it is enough to show that each injective loop in $\Lk(v, X_2)$ has length at least $2\pi$. The link $\Lk(v, X_2)$ is presented in Figure \ref{link_2} where, for the picture readability, top and bottom vertices of a link are identified. It is easy to see that there are no injective loops in $\Lk(v, X_2)$ of length smaller than $2\pi$, thus $X_2$ is locally CAT(0) and $\wtl{X_2}$ is a CAT(0) space.

Again, to check that $\wtl{X_2'}$ is not CAT(0) it is enough to check that $X_2'$ is not locally CAT(0), which can easily be done by analyzing $8$ possible ways in which complex $X_2'$ can be obtained and analyzing links in them. In Figure \ref{link_2} one of the $8$ analogous cases of how the link could look like is analyzed. All the edges in that picture have length of $\frac{\pi}{3}$, so there is a cycle of length $\frac{5\pi}{6}$. Such cycle is outlined in blue in figure \ref{link_2}.

\end{proof}

\begin{figure}[!ht]
    \begin{center}
\begin{tikzpicture}[scale=0.6]
    \draw[red,thick, dashdotted] (0,0)--(3,0)--(5,2)--(2,2)--(0,0);
    \filldraw[black] (0,0) circle (2.75pt) node[anchor=east]{$f_1$};
    \filldraw[black] (3,0) circle (2.75pt) node[anchor=west]{$e_1$};
    \filldraw[black] (2,2) circle (2.75pt) node[anchor=north west]{$\ol{e_1}$};
    \filldraw[black] (5,2) circle (2.75pt) node[anchor=west]{$\ol{f_1}$};

    \draw[red,thick, dashdotted] (0,4)--(3,4)--(5,6)--(2,6)--(0,4);
    \filldraw[black] (0,4) circle (2.75pt) node[anchor=east]{$\ol{f_2}$};
    \filldraw[black] (3,4) circle (2.75pt) node[anchor=west]{$\ol{e_2}$};
    \filldraw[black] (2,6) circle (2.75pt) node[anchor=north west]{$e_2$};
    \filldraw[black] (5,6) circle (2.75pt) node[anchor=west]{$f_2$};

    \draw[red,thick, dashdotted] (0,8)--(3,8)--(5,10)--(2,10)--(0,8);
    \filldraw[black] (0,8) circle (2.75pt) node[anchor=east]{$f_3$};
    \filldraw[black] (3,8) circle (2.75pt) node[anchor=west]{$e_3$};
    \filldraw[black] (2,10) circle (2.75pt) node[anchor=north west]{$\ol{e_3}$};
    \filldraw[black] (5,10) circle (2.75pt) node[anchor=west]{$\ol{f_3}$};

    \draw[red,thick, dashdotted] (0,12)--(3,12)--(5,14)--(2,14)--(0,12);
    \filldraw[black] (0,12) circle (2.75pt) node[anchor=east]{$\ol{f_1}$};
    \filldraw[black] (3,12) circle (2.75pt) node[anchor=west]{$\ol{e_1}$};
    \filldraw[black] (2,14) circle (2.75pt) node[anchor=north west]{$e_1$};
    \filldraw[black] (5,14) circle (2.75pt) node[anchor=west]{$f_1$};

     \draw[black,thick] (3,0)--(3,4)--(3,8)--(3,12);
     \draw[black,thick] (2,2)--(2,6)--(2,10)--(2,14);
     \draw[black,thick] (0,0)--(-1,2)--(0,4)--(-1,6)--(0,8)--(-1,10)--(0,12);
     \draw[black,thick] (5,2)--(6,4)--(5,6)--(6,8)--(5,10)--(6,12)--(5,14);
     \filldraw[black] (-1,2) circle (2.75pt);
     \filldraw[white] (-1,2) circle (1.83pt);
     \filldraw[black] (-1,6) circle (2.75pt);
     \filldraw[white] (-1,6) circle (1.83pt);
     \filldraw[black] (-1,10) circle (2.75pt);
     \filldraw[white] (-1,10) circle (1.83pt);
     \filldraw[black] (6,4) circle (2.75pt);
     \filldraw[white] (6,4) circle (1.83pt);
     \filldraw[black] (6,8) circle (2.75pt);
     \filldraw[white] (6,8) circle (1.83pt);
     \filldraw[black] (6,12) circle (2.75pt);
     \filldraw[white] (6,12) circle (1.83pt);
\end{tikzpicture}\hspace{0.5cm}
\begin{tikzpicture}[scale=0.6]

    \draw[black,thick] (0,0)--(3,0)--(4.5,0.5)--(5,2)--(3.5,2.7)--(2,2)--(0,0);
    \filldraw[black] (0,0) circle (2.75pt) node[anchor=east]{$f_1$};
    \filldraw[black] (3,0) circle (2.75pt) node[anchor=north west]{$e_1$};
    \filldraw[black] (5,2) circle (2.75pt) node[anchor=west]{$\ol{f_1}$};
    \filldraw[black] (4.5,0.5) circle (2.75pt);
    \filldraw[white] (4.5,0.5) circle (1.83pt);
    \filldraw[black] (3.5,2.7) circle (2.75pt);
    \filldraw[white] (3.5,2.7) circle (1.83pt);

    \draw[black,thick] (2,6)--(0.5,5.5)--(0,4)--(1.5,3.3);
    \draw[black,thick] (3,4)--(5,6)--(2,6);
    \filldraw[black] (0,4) circle (2.75pt) node[anchor=east]{$\ol{f_2}$};
    \filldraw[black] (0.5,5.5) circle (2.75pt);
    \filldraw[white] (0.5,5.5) circle (1.83pt);

    \draw[black,thick] (0,8)--(3,8)--(4.5,8.5)--(5,10)--(3.5,10.7)--(2,10)--(0,8);
    \filldraw[black] (0,8) circle (2.75pt) node[anchor=east]{$f_3$};
    \filldraw[black] (2,10) circle (2.75pt) node[anchor=north west]{$\ol{e_3}$};
    \filldraw[black] (5,10) circle (2.75pt) node[anchor=west]{$\ol{f_3}$};
    \filldraw[black] (4.5,8.5) circle (2.75pt);
    \filldraw[white] (4.5,8.5) circle (1.83pt);
    \filldraw[black] (3.5,10.7) circle (2.75pt);
    \filldraw[white] (3.5,10.7) circle (1.83pt);

    \draw[black,thick] (0,12)--(1.5,11.3)--(3,12)--(5,14)--(2,14)--(0.5,13.5)--(0,12);
    \filldraw[black] (0,12) circle (2.75pt) node[anchor=east]{$\ol{f_1}$};
    \filldraw[black] (2,14) circle (2.75pt) node[anchor=north west]{$e_1$};
    \filldraw[black] (5,14) circle (2.75pt) node[anchor=west]{$f_1$};
    \filldraw[black] (1.5,11.3) circle (2.75pt);
    \filldraw[white] (1.5,11.3) circle (1.83pt);
    \filldraw[black] (0.5,13.5) circle (2.75pt);
    \filldraw[white] (0.5,13.5) circle (1.83pt);

     \draw[black,thick] (3,0)--(3,4);
     \draw[black, thick] (3,4)--(3,8)--(3,12);
     \draw[black, thick] (2,2)--(2,6);
     \draw[black,thick] (2,6)--(2,10)--(2,14);
     \draw[black,thick] (0,0)--(-1,2)--(0,4)--(-1,6)--(0,8)--(-1,10)--(0,12);
     \draw[black,thick] (5,2)--(6,4)--(5,6)--(6,8)--(5,10)--(6,12)--(5,14);
     \filldraw[black] (-1,2) circle (2.75pt);
     \filldraw[white] (-1,2) circle (1.83pt);
     \filldraw[black] (-1,6) circle (2.75pt);
     \filldraw[white] (-1,6) circle (1.83pt);
     \filldraw[black] (-1,10) circle (2.75pt);
     \filldraw[white] (-1,10) circle (1.83pt);
     \filldraw[black] (6,4) circle (2.75pt);
     \filldraw[white] (6,4) circle (1.83pt);
     \filldraw[black] (6,8) circle (2.75pt);
     \filldraw[white] (6,8) circle (1.83pt);
     \filldraw[black] (6,12) circle (2.75pt);
     \filldraw[white] (6,12) circle (1.83pt);
     
    \draw[blue!50, line width = 2pt] (2,2)--(2,6)--(5,6)--(3,4)--(3,5);
    \draw[black,thick] (1.5,3.3)--(3,4);
    \filldraw[black] (1.5,3.3) circle (2.75pt);
    \filldraw[white] (1.5,3.3) circle (1.83pt);
    \filldraw[blue!50] (3,4) circle (4.75pt) node[anchor=west]{$\ol{e_2}$};
    \filldraw[black] (3,4) circle (2.75pt);
    \filldraw[blue!50] (2,6) circle (4.75pt) node[anchor=north west]{$e_2$};
    \filldraw[black] (2,6) circle (2.75pt);
    \filldraw[blue!50] (2,2) circle (4.75pt) node[anchor=north west]{$\ol{e_1}$};
    \filldraw[black] (2,2) circle (2.75pt);
    \filldraw[blue!50] (5,6) circle (4.75pt) node[anchor=west]{$f_2$};
    \filldraw[black] (5,6) circle (2.75pt);
    \draw[black, thick] (2,2)--(2,6)--(5,6)--(3,4)--(3,5);

    \draw[blue!50, line width = 2pt] (3,5)--(3,12);
    \filldraw[blue!50] (3,8) circle (4.75pt) node[anchor=north west]{$e_3$};
    \filldraw[black] (3,8) circle (2.75pt);
    \filldraw[blue!50] (3,12) circle (4.75pt) node[anchor=west]{$\ol{e_1}$};
    \filldraw[black] (3,12) circle (2.75pt);
    \draw[black, thick] (3,5)--(3,12);
\end{tikzpicture}
\end{center}
    \caption{Left: link of $v$ in $X_2$, Right: a possible link of $v'$ in $X_2'$.}
    \label{link_2}
\end{figure}
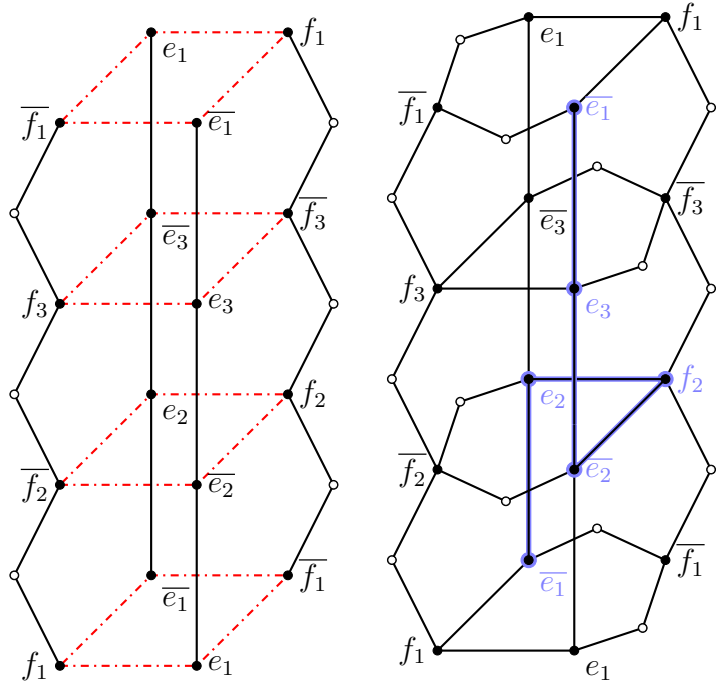

\begin{lm}\label{F_embeds_into_X2}
    The radial flat $F$ embeds in the universal cover $\wtl{X_2}$.
\end{lm}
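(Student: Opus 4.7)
The plan is to apply Lemma \ref{flat_embedding}: it suffices to construct a cellular map $\varphi\colon F\to X_2$, locally isometric on each cell, for which the induced map of links $\Lk(w,F)\to\Lk(\varphi(w),X_2)$ is injective at every vertex $w$ of $F$. Just as in the proof of Lemma \ref{F_embeds_into_X1}, I will place Cartesian coordinates on $F$ with $o$ at the origin and describe $\varphi$ by giving, for each directional class of oriented edges of $F$, the label of its image in $X_2$, together with a schematic picture analogous to Figure \ref{flat_1_1_paper}.

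The cellular decomposition of $F$ I exploit splits $F$ into the central hexagon (six unit triangles meeting at $o$), six infinite square strips (one per Eisenstein ray $r_0,\ldots,r_5$), and the six unbounded triangular wedge regions between consecutive strips. In $X_2$, if we triangulate the hexagon $ABCDEF$ along its three long diagonals we get six triangles meeting at its center; the three squares $P_iQ_iR_iS_i$ each carry two $f_i$-labeled and two $e_i$-labeled edges; and the two triangles $K_1L_1M_1$, $K_2L_2M_2$ each carry one edge of each label $e_1,e_2,e_3$. The map $\varphi$ sends the six triangles of $F$'s central hexagon isometrically onto the six triangles of $X_2$'s hexagon; opposite strips of $F$ are folded onto the same square $P_iQ_iR_iS_i$, with their two boundary edges matching the two $f_i$-labeled edges of that square; and the six triangular wedges split into two classes of three (by the orientation of their apex triangle), each class being sent to one of $K_1L_1M_1$ or $K_2L_2M_2$. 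Concretely, every directed edge of $F$ that lies in a strip and is parallel to an Eisenstein ray receives the label $f_i$ (for the appropriate $i\in\{1,2,3\}$), while every directed edge inside a wedge receives the label $e_j$ for the corresponding $j$; the orientations of these assignments are chosen to agree with the edge identifications in Figure \ref{example_2}.

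I then check link injectivity vertex-by-vertex, which, by the symmetry of the construction, reduces to four cases: the center $o$ of $F$'s hexagon (where the link is a $6$-cycle of $\pi/3$-edges and maps onto the link of the added central vertex of $X_2$'s subdivided hexagon); the six corners of $F$'s central hexagon, where two triangles of the hexagon, two square strips, and two triangular wedges meet; the vertices on the boundary between a strip and a wedge; and the interior vertices of a single strip or wedge, whose links are already links inside the Eisenstein plane and inject trivially. At each such vertex I list the incident directed edges of $F$ together with their $\varphi$-labels and locate the corresponding injective cycle in $\Lk(v,X_2)$ as drawn in Figure \ref{link_2}.

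The main obstacle is selecting the global labeling choices (which strip of $F$ goes to which $f_i$, which side of a wedge uses which $e_j$, and which class of wedges goes to which of $K_1L_1M_1,K_2L_2M_2$) in a way that is compatible with the cyclic boundary pattern $f_3,f_2,f_1,f_2,f_3,f_1$ of $X_2$'s hexagon at every one of the six corners of $F$'s central hexagon simultaneously. Once this matching is arranged so that all six corners of the central hexagon induce injective link maps, the remaining cases collapse to routine inspection of short cycles in $\Lk(v,X_2)$, and Lemma \ref{flat_embedding} then yields the desired embedding of $F$ into $\wtl{X_2}$.
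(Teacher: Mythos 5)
Your overall architecture is the same as the paper's: invoke Lemma \ref{flat_embedding}, decompose $F$ into the central hexagon, the six strips and the six wedges, send these respectively to the subdivided hexagon $ABCDEF$, the squares $P_iQ_iR_iS_i$ and the triangles $K_1L_1M_1$, $K_2L_2M_2$, and verify link injectivity case by case. However, the concrete assignment you describe is inconsistent with the cell structure of $X_2$. First, the two long boundary lines of a strip are shared with the adjacent wedges, and every wedge triangle must map to $K_1L_1M_1$ or $K_2L_2M_2$, all of whose edges carry $e$\=/labels; dually, the first rung of each strip is an edge of the central hexagon of $F$, which maps to an edge of $ABCDEF$, all of which carry $f$\=/labels. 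So the strip edges \emph{parallel} to the ray must be $e_i$\=/labeled and the transverse rungs $f_i$\=/labeled --- the opposite of what you wrote. Second, a single wedge contains unit triangles of both orientations sharing edges, and since the boundary words of $K_1L_1M_1$ and $K_2L_2M_2$ are $e_1e_2e_3$ and $e_1e_3e_2$ (opposite cyclic orders), the two orientation classes \emph{within each wedge} must go to the two \emph{different} triangle cells; sending a whole wedge (or a whole class of wedges) to one triangle cell would fold the map along every interior edge and destroy injectivity of the link maps there. The split into "two classes of three wedges" is not how the map can work.

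More importantly, the step you yourself flag as "the main obstacle" --- choosing the global labeling so that all six corners of the central hexagon (and the strip--wedge boundaries) simultaneously induce injective link maps --- is precisely the content of the lemma, and you leave it as "once this matching is arranged." The paper resolves it explicitly: the $e$\=/labels are fixed by edge direction on the three right\=/halfplane wedges and extended to the left halfplane by the reflection in $x=0$ (which, being orientation\=/reversing, swaps the roles of $K_1L_1M_1$ and $K_2L_2M_2$ consistently), while the $f$\=/labels are propagated outward along each strip with alternating orientation and around $o$ by rotations through multiples of $\pi/3$. Without exhibiting such an assignment and then checking the finitely many resulting vertex links against Figure \ref{link_2}, the argument remains a plan rather than a proof.
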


\begin{proof}
    Similarly to proof of lemma \ref{F_embeds_into_X1}, it is enough to check that there is a cellular map off triangle-square complexes $\varphi\colon F\to X_2$ locally isometric on each cell such that for each vertex $w$ of $F$ the induced map $\Lk(w,F) \to \Lk(\varphi(w),X_2)$ is injective. Again, each cell of $X_2$ is uniquely determined by oriented edges on its boundary, so it is enough to define such map on each edge of $F$. Such map is presented in the figure \ref{flat_2_1} and its formal description is presented below. 

    \begin{figure}[!ht]
    \centering
    \includegraphics[trim=1.1cm 3.7cm 1.1cm 4.2cm, clip, width=\textwidth]{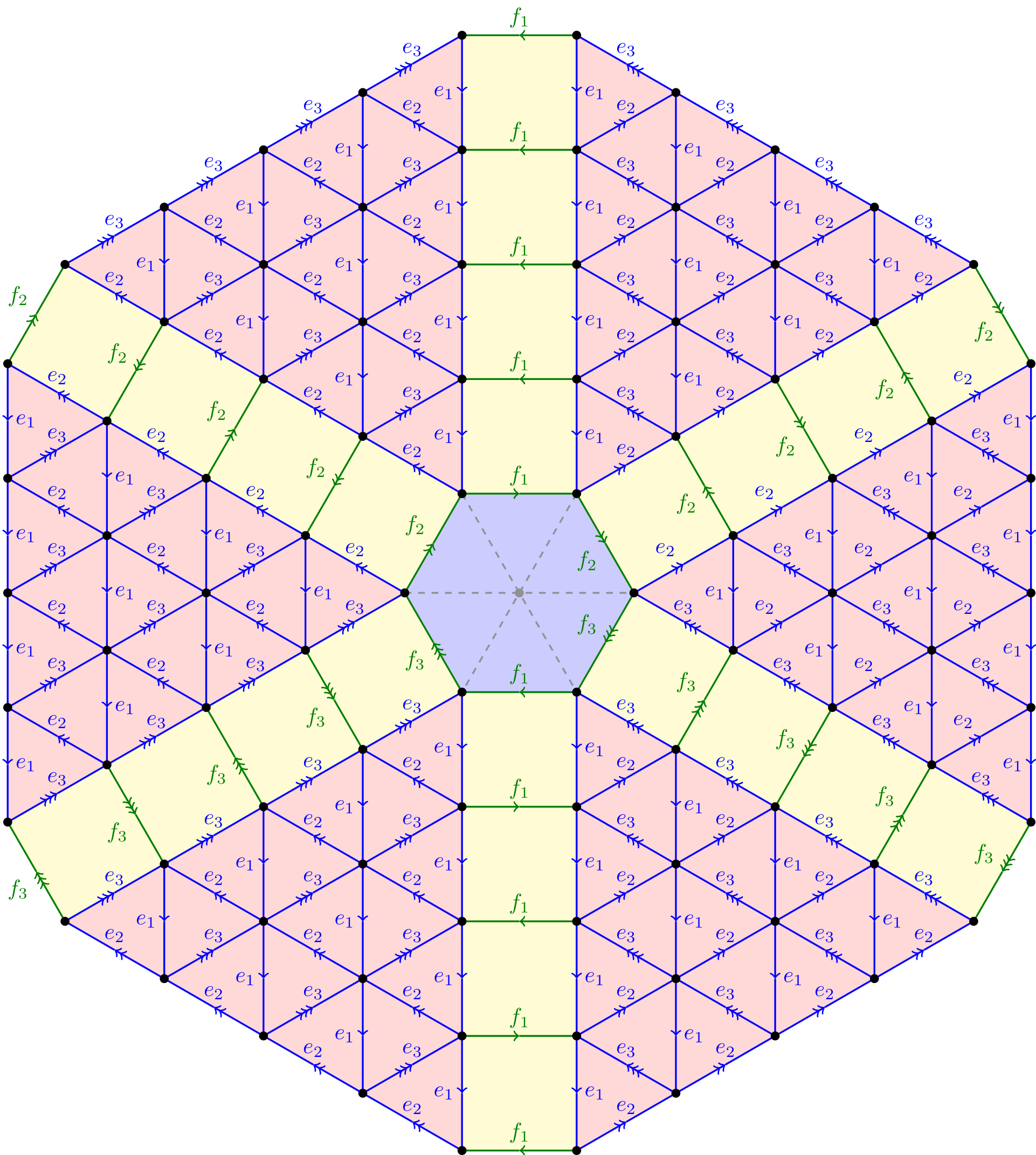}
    \caption{A piece of a radial flat $F$ that embeds into $\wtl{X}$}
    \label{flat_2_1}
\end{figure}

    Let us set the Cartesian coordinates on flat $F$ in the same way as in proof of lemma \ref{F_embeds_into_X1}, that is set $o$ as its center and set one edge of the central hexagon to be parallel to the line $y=0$. We will now define $\varphi$ for the edges in the unbounded triangle regions in the right halfplane bounded by the line $x=0$. Let every oriented edge of form $((x,y),(x,y-1))$ be mapped to $e_1$, every edge of form $((x,y),(x+\sqrt{3}/2,y+1/2))$ be mapped to $e_2$ and every edge of form $((x,y),(x-\sqrt{3}/2,y+1/2))$ be mapped to $e_3$. Moreover we can now define $\varphi$ for the edges in the unbounded triangle regions in a left halfplane bounded by the line $x=0$ in a following way: if an oriented edge $e'$ is a reflection of $e$ by the line $x=0$, then $\varphi(e)=\varphi(e')$.  Now let us define inductively the map $\varphi$ for all the edges in the upper half plane parallel to the line $y=0$. Let the edge $((-1/2,\sqrt{3}/2),(1/2,\sqrt{3}/2))$ be mapped to $f_1$, and whether the edge $((x,y),(x',y))$ is mapped to $f_1$, then the edge $((x',y+1),(x,y+1))$ is mapped to $f_1$ (note the change of direction). Now we can define the map $\varphi$ for the rest of the edges by saying that if the oriented edge $e$ was mapped to $f_1$, then the edges obtained from $e$ by rotating by $\pm \frac{\pi}{3}$ around $o$ are mapped to $f_2$, the edges obtained from $e$ by rotating by $\pm \frac{2\pi}{3}$ around $o$ are mapped to $f_3$ and the edges obtained from $e$ by rotating by $\pi$ around $o$ are mapped to $f_1$.
    
    It is clear that for every vertex $w$ of $F$ the induced map of the links is injective. Therefore $F$ embeds in the universal cover $\wtl{X_2}$.
\end{proof}

\begin{lm}
    Each of the flats $F_{2n-1}$ for $n\in \NN_+$ embeds in the universal cover $\wtl{X_2}$.
\end{lm}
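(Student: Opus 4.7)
The plan is to mimic the tile-by-tile construction used in the proof of Lemma \ref{Fn_embeds_into_X1}, with the map $\varphi\colon F \to X_2$ from Lemma \ref{F_embeds_into_X2} serving as the reference. By Lemma \ref{flat_embedding}, it suffices to exhibit, for each positive integer $n$, a cellular map $\varphi_{2n-1}\colon F_{2n-1} \to X_2$ that is locally isometric on each cell and whose induced map $\Lk(w, F_{2n-1}) \to \Lk(v, X_2)$ is injective at every vertex $w$.

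First I would fix a fundamental tile $D_{2n-1}$ that tiles $F_{2n-1}$ under an Eisenstein-style translation lattice $\mathcal{T} \cong \ZZ^2$. The natural choice is a region consisting of one unit hexagon (six unit triangles), three square strips of length $2n-1$ attached to alternating sides of this hexagon, and two large equilateral triangles of side $2n-1$ (each subdivided into $(2n-1)^2$ unit triangles) filling the gaps between consecutive chosen strips. This $D_{2n-1}$ embeds isometrically as a subcomplex of the radial flat $F$ centered at $o$, so restricting $\varphi$ gives a cellular map $\psi_{2n-1}\colon D_{2n-1} \to X_2$, and I would define
$$\varphi_{2n-1}(x) = \psi_{2n-1}(t^{-1}(x)) \quad \text{for } x \in t(D_{2n-1}),\ t \in \mathcal{T}.$$

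The heart of the argument is verifying well-definedness along the seams between adjacent translates of $D_{2n-1}$, and this is where the odd index becomes essential. In $\varphi$, successive rows of a strip carry their horizontal $f_i$-labels with alternating orientation, so a strip of odd length $2n-1$ has its $f_i$-labels pointing oppositely at its two ends. Combined with the cyclic $f_1, f_2, f_3, \ol{f_1}, \ol{f_2}, \ol{f_3}$ labelling around the boundary of the hexagon in $X_2$, this orientation reversal is precisely what is needed for $\psi_{2n-1}$ on opposite faces of $D_{2n-1}$ to agree after translation by an element of $\mathcal{T}$. For even strip lengths the parity would be wrong, and no such periodic labelling exists.

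Once well-definedness is established, link injectivity follows readily: every vertex $w$ of $F_{2n-1}$ lies, after applying a suitable $t \in \mathcal{T}$, inside the interior of $D_{2n-1} \subset F$, so the local neighborhood of $\varphi_{2n-1}$ at $w$ coincides with the local neighborhood of $\varphi$ at some point of $F$, where the induced link map was already shown to be injective in the proof of Lemma \ref{F_embeds_into_X2}. I expect the boundary-consistency step to be the main obstacle: the underlying tiling geometry is simple, but carefully tracking how the alternating $f_i$-orientations interact with the threefold symmetry of the hexagon of $X_2$ on all seams of $\mathcal{T}$-translates is the bookkeeping-heavy part of the argument, and it is exactly this bookkeeping that pins down the restriction to odd indices.
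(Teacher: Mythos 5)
Your overall framework (reduce to Lemma \ref{flat_embedding}, tile $F_{2n-1}$ by translates of the hexagon-plus-strips-plus-triangles tile $D_{2n-1}$, push forward the restriction of $\varphi$) is the right starting point, but the construction as you state it is not well-defined, and the reason is exactly the point where your proposal departs from the facts about $X_2$. You assert a ``cyclic $f_1, f_2, f_3, \ol{f_1}, \ol{f_2}, \ol{f_3}$ labelling around the boundary of the hexagon in $X_2$''; in fact the gluing $f_1\colon AF\sim DC$, $f_2\colon CB\sim ED$, $f_3\colon BA\sim FE$ gives the boundary word $\ol{f_3}\,\ol{f_2}\,\ol{f_1}\,\ol{f_2}\,\ol{f_3}\,\ol{f_1}$, which is mirror-symmetric rather than rotation-invariant: only the $f_1$-pair of opposite edges carries the same label, while the edges opposite to the $f_2$-edges are labelled $f_3$ and vice versa. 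Consequently, if every hexagon of $F_{2n-1}$ receives the translate of the labelling of the central hexagon of $F$, then for a strip joining hexagons $H$ and $H'$ in one of the four non-$f_1$ directions, the tile of $H$ forces all rungs of that strip (including the edge of $H'$ at its far end) to be $f_2$, whereas the translated labelling of $H'$ assigns $f_3$ to that same edge. So the pushforward $\varphi_{2n-1}(x)=\psi_{2n-1}(t^{-1}(x))$ is simply not a function, independently of any parity considerations. Your odd-length orientation argument is a genuine ingredient (it is why only $F_{2n-1}$, not $F_{2n}$, embeds), but it cannot repair a label clash between $f_2$ and $f_3$.

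The paper resolves this by doubling the fundamental domain: it takes $E_{2n-1}$ to be the union of $D_{2n-1}$ with a translated copy of $D_{2n-1}$, and defines the map on the second copy as $\sigma_{23}\circ\varphi\circ\tau$, where $\sigma_{23}$ is the cellular automorphism of $X_2$ fixing $e_1,f_1$ and swapping $e_2\leftrightarrow e_3$, $f_2\leftrightarrow f_3$. This is forced: rotating the hexagon's boundary word by $\pi$ yields exactly its image under $\sigma_{23}$, so two hexagons seeing the same strip from opposite ends must carry labellings differing by $\sigma_{23}$ (and likewise the two types of large triangular regions in $F$, which differ precisely by the $e_2\leftrightarrow e_3$ swap, must alternate appropriately). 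The translation lattice is then chosen so that the plain and twisted copies of $D_{2n-1}$ alternate consistently across $F_{2n-1}$. Without the automorphism $\sigma_{23}$ and the doubled tile $E_{2n-1}$, the argument does not go through; this is the essential new idea distinguishing this lemma from Lemma \ref{Fn_embeds_into_X1}, where the single-tile translational construction you describe does suffice.
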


\begin{proof}
    Similarly to the proofs of the lemmas \ref{F_embeds_into_X1}, \ref{Fn_embeds_into_X1}, \ref{F_embeds_into_X2} it is enough to check that there is a cellular map of triangle-square complexes $\varphi_{2n-1}\colon F_{2n-1}\to X_2$ locally isometric on each cell such that for each vertex $w$ of $F_{2n-1}$ the induced map $\Lk(w,F_{2n-1}) \to \Lk(\varphi_{2n-1}(w),X_2)$ is injective. In figure \ref{flat_2_2} we present such map for $n=2$. We will now construct such map directly in a similar fashion to the proof of lemma \ref{Fn_embeds_into_X1}. Again, we will use the fact that each cell in the complex $X_2$ is defined uniquely by its edges which allows us to only define them map $\varphi_{2n-1}$ on edges of the flat $F_{2n-1}$.

    \begin{figure}[!ht]
    \centering
    \includegraphics[trim=8cm 50cm 8cm 50cm, clip, width=\textwidth]{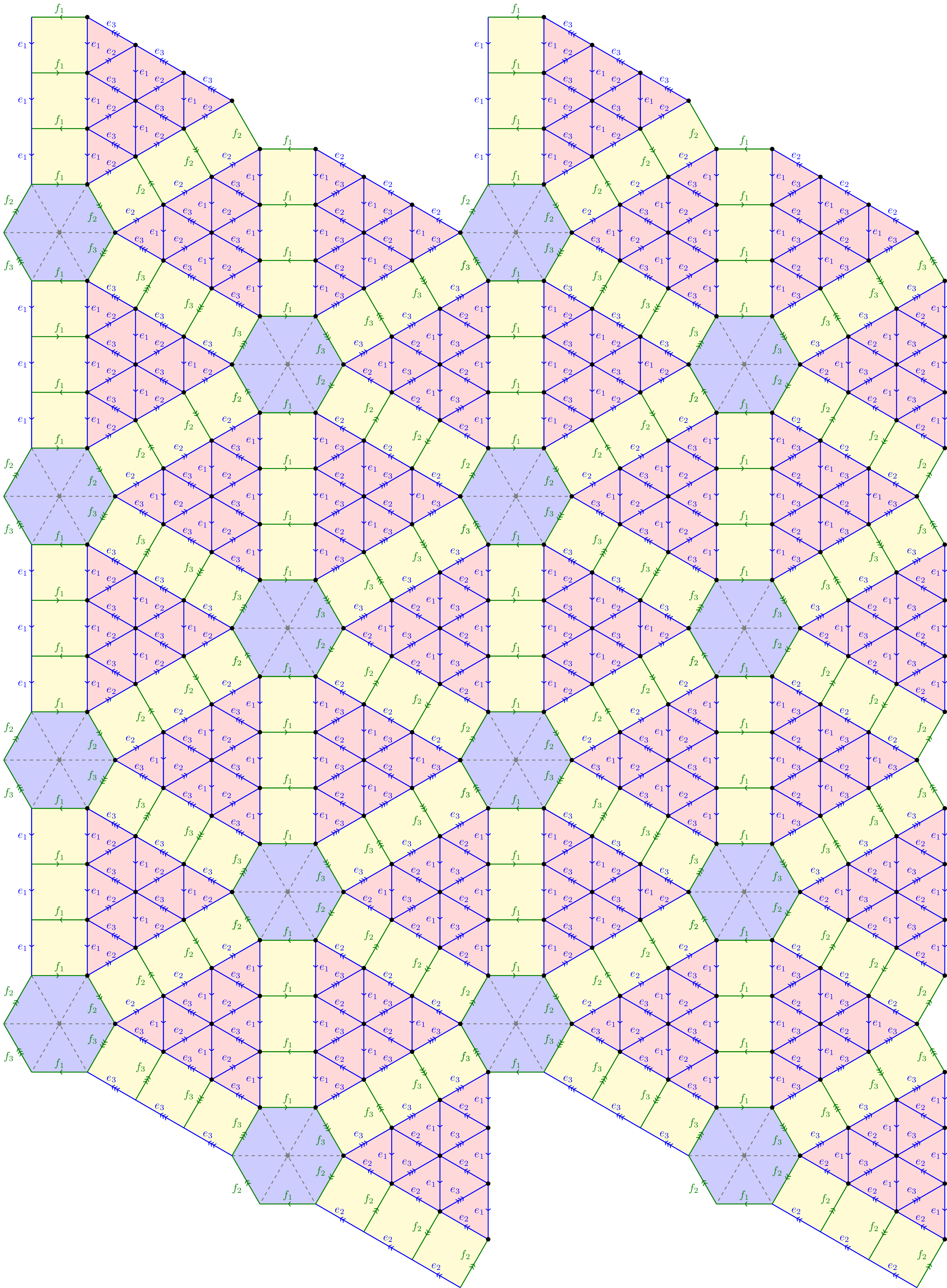}
    \caption{A part of thoroughly crumpled flat $F_3$ that embeds into $\wtl{X_2}$.}
    \label{flat_2_2}
\end{figure}

    Let the polygon $D_{2n-1}$ be defined as in the proof of the lemma \ref{Fn_embeds_into_X1}. We will now set the Cartesian coordinates on $D_{2n-1}$ in such a way that the point $(0,0)$ is the center of the hexagon used in the construction and $((1/2,-\sqrt{3}/2),(1,0)), ((1,0),(1/2,\sqrt{3}/2)), ((1/2,\sqrt{3}/2),(-1/2,\sqrt{3}/2))$ are edges of the hexagon to which the rectangles are glued. Now let $E_{2n-1}$ be a polygon defined as a union of $D_{2n-1}$ and $D_{2n-1}$ translated by vector $(3/2+\sqrt{3}n-\sqrt{3}/2, -n+1/2-\sqrt{3}/2)$. It is easy to check that we can tile a plane by translation of $E_{2n-1}$ of form $(A(2\sqrt{3}n-\sqrt{3}+3),B(\sqrt{3}+2n-1))$ for $A,B\in \ZZ$. Such plane treated as a triangle square complex is in fact isomorphic to the plane $F_{2n-1}$. We will now define a map $\psi_{2n-1}$ on $E_n$.
    
    Firstly we will define the ancillary automorphism $\sigma_{23}$ on the complex $X_2$. Since each cell of $X_2$ is uniquely defined by its oriented edges, we can define the map $\sigma_{23}$ on such edges in a following way:
    \begin{align*}
        \sigma_{23}(e_1) = e_1, && \sigma_{23}(e_2) = e_3, && \sigma_{23}(e_3)=e_2, &&\sigma_{23}(f_1) = f_1, && \sigma_{23}(f_2) = f_3, && \sigma_{23}(f_3) = f_2.
    \end{align*}
    It is easy to check that a map defined in such a way extends to a cellular automorphism.
    Now we can define the map $\psi_{2n-1}$ on $E_{2n-1}$ in a following way: we define such map on the non\=/translated copy of $D_{2n-1}$ as a restriction of the map $\varphi$ from the proof of the lemma \ref{F_embeds_into_X2} and on the translated copy of $D_{2n-1}$ is defined as a composition $\sigma_{23}\circ\varphi \circ \tau_{(-3/2-\sqrt{3}n+\sqrt{3}/2, n-1/2+\sqrt{3}/2)}$ of $\sigma_{23}$, the map $\varphi$ from the proof of the lemma \ref{F_embeds_into_X2} and a translation by vector $(-3/2-\sqrt{3}n+\sqrt{3}/2, n-1/2+\sqrt{3}/2)$. Now let us consider a group $\mathcal{T}$ generated by translations by vectors $(2\sqrt{3}n-\sqrt{3}+3,0)$ and $(0,\sqrt{3}+2n-1)$. The map $\varphi_{2n-1}$ is defined as:

    $$\varphi_{2n-1}(x) = \psi_{2n-1}(t^{-1}(x))\text{, where }t\in \mathcal{T}\text{ is such that }x\in t(E_{2n-1}).$$
    
    In Figure \ref{flat_2_2} each edge is labeled with edge onto which it is mapped by $\varphi_{2n+1}$ which uniquely determines which cell of $X_2$ the given cell of $F_{2n+1}$ is mapped to. It is clear that for every vertex $w$ of $F$ the induced map of the links is injective. Therefore $F_{2n+1}$ embeds in the universal cover $\wtl{X_2}$.
\end{proof}

\section{Final remarks}\label{final remarks}

The counterexamples presented above imply that the Gersten-Short geodesics in general cannot be used to construct a regular path system satisfying fellow traveler property. It is still possible however that if we restrict ourselves to compact complexes not violating conjectures \ref{conj:1} and \ref{conj:2} the Gersten-Short geodesics will create a regular path system ensuring that fundamental group of $X$ will be biautomatic. This allow us to state the following conjecture:

\begin{conj}
    Let $X$ be a compact locally CAT(0) triangle-square complex such that no intrinsically aperiodic flat embed in $\wtl{X}$ and there are only finitely many distinct thoroughly crumpled flats that embed in $\wtl{X}$. Then there is a vertex $v$ of $X$ such that Gersten-Short geodesics starting and ending in points of orbit $\pi_1(X)\cdot v$ form a regular path system satisfying the generalized fellow traveler property. In particular $\pi_1(X)$ is biautomatic.
\end{conj}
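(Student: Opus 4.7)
The plan is to apply Theorem \ref{swiatkowskis_thm} to $G = \pi_1(X)$ acting on the $1$-skeleton of $\wtl{X}$. Let $v_0$ be any vertex of $\wtl{X}$ lying over a vertex of $X$, and take $\mathcal{P}$ to be the set of all Gersten-Short geodesics joining pairs of vertices in the orbit $G \cdot v_0$. The orbit covers every vertex of $\wtl{X}$ (or at least one vertex in each $G$-orbit if $X$ has several), so between any two such vertices at least one GS geodesic exists, and $\mathcal{P}$ is manifestly $G$-invariant because the choke-point construction is local and $G$ acts by cellular isometries. It then remains to verify regularity of $\mathcal{P}$ and the generalized fellow traveler property.

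For regularity, I would build a finite-to-one automaton $M$ over $(G,\wtl{X}^{(1)})$ whose states encode a vertex $w$ of $\wtl{X}$ together with a bounded-radius local pattern recording how the choke-point rule will resolve the next few moves (which of cases (i),(ii),(iii) applies and along which outgoing edges). Since $X$ is compact and every GS decision at $w$ depends only on finitely many cells around $w$ plus the direction of the remaining geodesic, modulo $G$ there are only finitely many such states per vertex. The initial states are those at $v_0$, the final states are the ``terminated'' ones, and transitions encode the actual edge taken by the GS rule. The image map $m\colon M \to \wtl{X}^{(1)}$ is $G$-equivariant by construction, and finite-to-one by cocompactness of the $G$-action on $X$.

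The heart of the argument is the generalized fellow traveler property. Fix two GS geodesics $\gamma_1,\gamma_2 \in \mathcal{P}$ whose endpoints are at $\wtl{X}^{(1)}$-distance at most $\ell$. Because $\wtl{X}$ is CAT(0) (Theorem \ref{CH}), the continuous geodesics between the matched endpoints remain at distance at most $\ell$ throughout, so $\gamma_1$ and $\gamma_2$ can only drift apart in a combinatorial sense when the region between them is forced to be flat. Making this precise is the key analytic step: I would show, using the CAT(0) flat-strip theorem (\cite{bh}, II.2) combined with the analysis of Levitt-McCammond describing which GS moves can produce divergence, that along any maximal sub-interval where $\gamma_1$ and $\gamma_2$ fail to $(C\ell+D)$-fellow-travel in a $\delta$-hyperbolic sense, both geodesics lie in a common isometrically embedded flat $F \subset \wtl{X}$. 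Under the hypothesis of the conjecture, such an $F$ must be pure, striped, or cellularly isometric to one of the finitely many thoroughly crumpled flats embedded in $\wtl{X}$, so Theorem \ref{ftp-in-flats} supplies a fellow-traveling constant $k_F$. Taking the maximum of these constants over the finitely many isomorphism classes of flats that can arise yields a single global $k$, and combining this flat-region estimate with the $\delta$-hyperbolic estimate outside flats produces the desired linear bound $C\ell + D'$. Then Theorem \ref{swiatkowskis_thm} delivers biautomaticity.

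The main obstacle is exactly this ``flat sandwiching'' step: rigorously extracting an honest isometrically embedded flat $F \subset \wtl{X}$ containing a long segment of both $\gamma_1$ and $\gamma_2$ whenever they diverge linearly. In the CAT(0) cube and systolic cases the analogous fact is handled by very specific combinatorial tools (hyperplanes, convex hulls of geodesics) that are not directly available here, and the triangle-square geometry of \cite{triangles_squares} shows that GS moves can shuffle between triangle and square regions in subtle ways. A viable route is to take the closed convex hull (in the CAT(0) sense) of the two geodesics, show it is two-dimensional and flat where they fail to fellow-travel, and then use cocompactness plus the no-intrinsically-aperiodic-flat hypothesis to bootstrap such flat slabs into genuinely embedded planes of one of the controlled types. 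Ensuring that the bookkeeping of which choke-point move is executed at each vertex is compatible with the flat structure (so that Theorem \ref{ftp-in-flats} can indeed be applied to \emph{this} pair of GS geodesics restricted to the flat) is the most delicate point and, I expect, will require a careful local analysis of links in the spirit of the examples worked out in sections \ref{first_example} and \ref{second_example}.
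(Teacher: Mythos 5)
The statement you are proving is stated in the paper as a \emph{conjecture} (it appears in the final remarks, Section \ref{final remarks}) and the paper offers no proof of it; indeed the paper's main results are obstructions to exactly the strategy you outline, and this conjecture is the author's proposed salvage of the Levitt--McCammond program. Your proposal is therefore a research outline rather than a proof, and it contains --- as you partly acknowledge --- at least two genuine gaps that coincide with the open problems in the area.

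First, regularity. The paper says only that it is ``strongly believed'' that Gersten--Short geodesics form a regular path system; this is not established even in \cite{triangles_squares}. Your automaton sketch hides the difficulty in the phrase ``plus the direction of the remaining geodesic'': the choke-point recursion at a vertex $w$ depends on the set of \emph{all} geodesics from $w$ to the terminal vertex $u$, and $u$ may be arbitrarily far away, so it is not clear that the decision at $w$ can be encoded by a bounded-radius local pattern; making the state space finite-to-one modulo $G$ is the whole problem, not a consequence of cocompactness. Second, the fellow-traveling step. Convexity of the CAT(0) metric controls the continuous geodesics between matched endpoints, but GS geodesics are paths in the $1$-skeleton, not CAT(0) geodesics, so that distance bound does not transfer to $\gamma_1,\gamma_2$ directly. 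More seriously, the flat strip theorem produces flat strips between parallel geodesic \emph{lines}, not embedded planes containing two diverging combinatorial geodesic segments, and Proposition \ref{fellow_traveling_in_F} shows that GS geodesics can diverge linearly \emph{inside} a single radial flat --- flats which, by the Main Theorem, do embed in compact examples. Finally, Theorem \ref{ftp-in-flats} concerns GS geodesics computed inside a flat $F$ viewed as a standalone complex; a GS geodesic of $\wtl{X}$ whose image lies in an embedded flat need not restrict to a GS geodesic of that flat, because its choke points are determined by geodesics that may leave $F$. None of these steps is routine, which is why the statement remains open.
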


Furthermore, even if the Gersten-Short geodesics do not yield satisfying result regarding biautomaticity of groups acting geometrically and cellularly on CAT(0) triangle-square complexes it is entirely possible that all such groups are biautomatic for some other, yet unknown reason. As an example of possible behaviour described above the first of the presented counterexamples turns out to be biautomatic since the triangle-square complexe on which it acts is systolizable. We conjecture that the fundamental group of the complex $X_2$ is also biautomatic.

\begin{conj}\label{conj_X2}
    The fundamental group $\pi_1(X_2)$ is biautomatic.
\end{conj}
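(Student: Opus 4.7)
My proposed approach to Conjecture~\ref{conj_X2} exploits the natural decomposition of $X_2$ into sub-complexes whose fundamental groups are already known to be biautomatic. The two triangles $K_1L_1M_1$ and $K_2L_2M_2$ glued along the $e_i$-identifications form a torus $T$ with $\pi_1(T) \cong \ZZ^2$; the hexagon $ABCDEF$ with the $f_i$-identifications forms a closed non-orientable surface $N$ of genus $3$, so $\pi_1(N)$ is a hyperbolic (hence biautomatic) surface group; and each square $P_iQ_iR_iS_i$ with the gluing $e_i f_i e_i^{-1} f_i = 1$ is a Klein bottle $K_i$ with biautomatic CAT(0) fundamental group. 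The three Klein bottles share the circle $e_i$ with $T$ and the circle $f_i$ with $N$, giving a structural splitting of $X_2$ reminiscent of a graph of spaces collapsed to the single vertex $v$.

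The plan is to express $\pi_1(X_2)$ as a finite iterated amalgamated product along cyclic edge subgroups and invoke a combination theorem for biautomaticity. For the intermediate sub-complex $Y = T \cup K_1 \cup K_2 \cup K_3$, attaching the three Klein bottles to $T$ one at a time yields the triple amalgamation
\[
\pi_1(Y) = \pi_1(T) *_{\langle e_1 \rangle} \pi_1(K_1) *_{\langle e_2 \rangle} \pi_1(K_2) *_{\langle e_3 \rangle} \pi_1(K_3),
\]
where each edge group is infinite cyclic and undistorted in both vertex groups. Combination theorems for amalgamations of biautomatic groups over cyclic subgroups should then yield biautomaticity of $\pi_1(Y)$ inductively. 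Completing $X_2 = Y \cup N$ requires identifying the three $f_i$-circles common to $Y$ and $N$; the first identification (along $f_1$) is a clean amalgamation $\pi_1(Y) *_{\langle f_1 \rangle} \pi_1(N)$, but the subsequent identifications of the $f_2$- and $f_3$-circles are basepoint-preserving collapses of two circles already present in the connected ambient space, which do not fit the standard amalgamation/HNN framework, since the natural map $F_3 = \langle f_1, f_2, f_3 \rangle \to \pi_1(N)$ fails to be injective (the hexagonal relation $f_1 f_3 f_2 f_1 f_2 f_3 = 1$ lies in its kernel).

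This non-injectivity is the main obstacle, as it prevents a direct appeal to cyclic amalgamation theorems at the final stage. I would try two strategies to overcome it. The first is to modify the decomposition by first splitting $N$ along one of its $f_i$-circles to obtain a surface with boundary whose fundamental group is free, thereby expressing $\pi_1(N)$ itself as an HNN extension over a cyclic subgroup and rewriting $\pi_1(X_2)$ as a purely cyclic iterated sequence of amalgamations and HNN extensions of biautomatic pieces. The second, as a backup, is to apply Theorem~\ref{swiatkowskis_thm} directly by constructing a $\pi_1(X_2)$-equivariant regular path system on $\wtl{X_2}^{(1)}$ using Bass--Serre normal forms coming from the cyclic splittings, then verifying the generalized fellow traveler property. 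The delicate point in the latter approach is controlling fellow traveling inside the radial flat $F$ and the infinite family of crumpled flats $F_{2n-1}$ embedded in $\wtl{X_2}$: the paths in the proposed system must be designed to re-route around these pathological flats in a regular, $\pi_1(X_2)$-equivariant way while still recognizing every element of $\pi_1(X_2)$.
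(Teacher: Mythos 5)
First, note that the paper does not prove this statement: it is stated as Conjecture~\ref{conj_X2} and explicitly left open (the abstract says the biautomaticity of $\pi_1(X_2)$ ``remains unknown''), so there is no proof in the paper to compare against. Your proposal does not close the conjecture either --- it is a programme with two unexecuted strategies rather than an argument. That said, your decomposition of $X_2$ is correct and is a genuinely nice observation absent from the paper: the two triangles do form a torus $T$ (the relations $e_1e_2e_3=1$ and $e_1e_3e_2=1$ force $[e_1,e_2]=1$), the hexagon gives the non-orientable closed surface with $\chi=-1$ and relator $f_1f_3f_2f_1f_2f_3$, and each square is a Klein bottle with relator $e_if_ie_i^{-1}f_i$. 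This yields the presentation $\pi_1(X_2)=\langle e_1,e_2,e_3,f_1,f_2,f_3 \mid e_1e_2e_3,\, e_1e_3e_2,\, f_1f_3f_2f_1f_2f_3,\, e_if_ie_i^{-1}f_i\ (i=1,2,3)\rangle$ and could plausibly be the starting point of a genuine proof.

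The gaps are these. (a) The combination theorem you invoke for $\pi_1(Y)$ does not exist: amalgams and HNN extensions of biautomatic --- even free abelian --- groups over undistorted infinite cyclic subgroups need not be biautomatic or even automatic (Brady--Bridson snowflake groups are amalgams of copies of $\ZZ^2$ over $\ZZ$ with super-quadratic Dehn function), and the Leary--Minasyan examples cited in the introduction show that HNN extensions of $\ZZ^2$ can be CAT(0) yet non-biautomatic. Since your vertex groups $\pi_1(T)\cong\ZZ^2$ and $\pi_1(K_i)$ are virtually $\ZZ^2$, you are squarely in the regime where no blanket combination theorem applies; you would need to verify case-specific hypotheses (e.g.\ relative hyperbolicity with respect to the flats, or a cubulation) for these particular amalgams. (b) You correctly diagnose that the final gluing is not a graph-of-groups decomposition: the nerve of the cover $\{T,N,K_1,K_2,K_3\}$ has first Betti number $2$, and $X_2$ identifies the circles $f_2,f_3$ directly rather than via stable letters, so $\pi_1(X_2)$ is a proper quotient of the corresponding graph-of-groups fundamental group (your remark about $F_3\to\pi_1(N)$ is a symptom of this, though the relevant injectivity for Bass--Serre theory is of each $\langle f_i\rangle$ separately, which does hold). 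Neither proposed repair is carried out, and the ``cut $N$ along $f_1$'' route still leaves you with multiple-circle identifications through the common basepoint. (c) The backup strategy --- build a regular path system on $\wtl{X_2}^{(1)}$ and verify the generalized fellow traveler property while ``re-routing around'' the flats $F$ and $F_{2n-1}$ --- is precisely the open problem: Proposition~\ref{fellow_traveling_in_F} and Lemmas~\ref{F_embeds_into_X2}--5.4 show that these flats obstruct the only known candidate path system, and no replacement is specified. So the proposal identifies the right obstacles but does not overcome them.
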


Moreover, the conjecture from the original paper of R. Levitt and J. McCammond remains open.

\begin{conj}[conjecture $\mathbf{1.2}$ from \cite{triangles_squares}]\label{1.2_from_triangles_squares}
    The fundamental group of a compact locally CAT(0) triangle-square complex is biautomatic.
\end{conj}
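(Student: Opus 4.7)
The natural gateway is Theorem~\ref{swiatkowskis_thm}: exhibit a $\pi_1(X)$-invariant path system $\mathcal{P}$ in the $1$-skeleton of $\wtl{X}$ that is regular and satisfies the generalized fellow traveler property. The counterexamples in Sections~\ref{first_example} and \ref{second_example} show that $\mathcal{P}$ cannot be taken to be the Gersten-Short geodesics in all cases, so I would pursue two complementary lines of attack and hope that together they cover every compact locally CAT(0) triangle-square complex.

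The first line extends the systolization argument used for $X_1$. Given any compact locally CAT(0) triangle-square complex $X$, the plan is to look for a homeomorphic triangle complex $X'$ whose universal cover is systolic, in which case biautomaticity follows at once from Fact~\ref{2-dim-systolic} and Theorem~\ref{systolic-biautomatic}. For $X_1$ this worked because each square could be cut along a diagonal and the resulting link of the unique vertex contained no isometric cycle of length less than $2\pi$. In general one would need a local criterion --- presumably a condition on the link of each vertex of $X$ --- that guarantees the existence of such a diagonal choice. Since the obstruction for $X_2$ is conjecturally non-trivial (Conjecture~\ref{conj_X2} remains open), this line alone will likely not suffice.

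The second, more robust line would build $\mathcal{P}$ directly by gluing two known families of normal forms: Niblo--Reeves-type normal forms traversing the hyperplanes of square regions, and Januszkiewicz--Świątkowski systolic normal forms inside triangle regions. Concretely, I would assign each vertex of $\wtl{X}$ a hybrid label recording its position relative to the nearest square walls and its systolic type in the surrounding triangle fan, then define paths that first resolve the cubical component and afterwards the systolic one. Regularity would follow from local finiteness of such labels up to the $\pi_1(X)$-action, and the fellow traveler property in pure and potentially periodic flats is already controlled by Theorem~\ref{ftp-in-flats}.

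The main obstacle is the behavior of $\mathcal{P}$ inside intrinsically aperiodic flats such as $F$ and inside the infinite families of thoroughly crumpled flats $F_n$ which, as this paper demonstrates, can embed isometrically into $\wtl{X}$. In such flats no global periodic structure is available, so the path system must be governed by local data alone, and the second half of Theorem~\ref{ftp-in-flats} warns that even within a single flat the obvious choke-point approach can fail by arbitrarily large constants. I expect that a successful proof will require a genuinely new bounded-radius invariant --- for instance a "corner signature" that propagates along square strips and records enough of the local configuration to synchronise nearby paths --- replacing choke points entirely. Producing such an invariant, and checking that it is both $\pi_1(X)$-equivariant and compatible with gluing across the transition zones between square strips and triangle fans, is where I expect the essential difficulty to lie.
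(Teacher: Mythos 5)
This statement is not proved in the paper: it is Conjecture~\ref{1.2_from_triangles_squares}, explicitly left open, and indeed the paper's main contribution is negative evidence --- showing that the radial flat $F$ and infinitely many thoroughly crumpled flats $F_n$ do embed into $\wtl{X_1}$ and $\wtl{X_2}$, so that the Gersten--Short approach of \cite{triangles_squares} cannot work in general. Your text is likewise not a proof but a research programme, and you say so yourself; the honest answer is that the gap is the entire argument. Concretely: your first line of attack (systolization) is exactly what the paper does for $X_1$, but the paper shows no way to do this for $X_2$ and leaves even the biautomaticity of $\pi_1(X_2)$ as a separate open conjecture (Conjecture~\ref{conj_X2}); so this line is known to be insufficient and you have no local criterion for when a triangle-square complex is systolizable. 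Your second line (hybrid Niblo--Reeves/Januszkiewicz--\'Swi\k{a}tkowski normal forms) is never instantiated: you do not define the path system, do not prove it is regular, and do not prove the generalized fellow traveler property --- and the last of these is precisely where Theorem~\ref{ftp-in-flats} and Proposition~\ref{fellow_traveling_in_F} show that natural candidates fail by unbounded amounts inside intrinsically aperiodic flats that genuinely occur in $\wtl{X_1}$ and $\wtl{X_2}$.

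The ``corner signature'' invariant you invoke to rescue the construction is hypothetical; producing it is not a technical detail to be checked but the open problem itself. Until such an invariant is defined and shown to yield a regular, $\pi_1(X)$-equivariant path system fellow-traveling across radial and thoroughly crumpled flats (or until a counterexample is found --- which would resolve a long-standing question about $2$-dimensional CAT(0) groups), the statement remains a conjecture. Your outline is a reasonable description of where the difficulty lies, and it is consistent with the paper's own concluding discussion in Section~\ref{final remarks}, but it should not be presented as a proof.
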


A proof of the conjecture \ref{conj_X2} may be important, since it potentially could provide a new method for studying fundamental groups of locally CAT(0) triangle-square complexes, potentially enabling the progress in proving conjecture \ref{1.2_from_triangles_squares}. On the other hand, a counterexample to the conjecture \ref{1.2_from_triangles_squares} would also be interesting as it would give an example of  non-biautomatic $2$-dimensional CAT(0) group, answering one of the long-standing open questions in the theory of biautomatic groups.

The search of possible biautomatic structures on presented counterexamples can also possibly be executed using an appropriate computer software. One such method is an algorithm \textsc{kbmag2}, described in a paper \cite{automatic_search} and further developed by D. Holt with coauthors \cite{holt2000kbmag2}. Described algorithms are however sometimes heavy and may not be suitable for work with larger complexes. This raises an final new open problem regarding the CAT(0) triangle-square complexes.

\begin{oprob}
    Is the locally CAT(0) triangle-square complex $X_1$ the complex violating conjectures \ref{conj:1} and \ref{conj:2} with the smallest number of cells? Is the locally CAT(0) triangle-square complex $X_2$ the complex violating conjectures \ref{conj:1} and \ref{conj:2} with the smallest number of cells that is not systolizable? Describe more examples of locally CAT(0) triangle-square complexes violating conjectures \ref{conj:1} and \ref{conj:2} with relatively small number of cells.
\end{oprob}


\bibliographystyle{amsalpha}
\bibliography{mybibliography}

\vspace*{\fill}
{\large\textsc{Instytut Matematyczny, Uniwersytet Wrocławski, plac Grunwaldzki 2, 50-384 Wrocław,
Poland\medskip\\and\medskip\\Department of Mathematical Sciences, University of Copenhagen, Universitetspark 5,
2100 Copenhagen, Denmark}\medskip\\
\textit{Email address: }\href{mailto:mk@math.ku.dk}{mk@math.ku.dk}\medskip\\
\textsc{Keywords and phrases:} biautomaticity, triangle-square complexes, CAT(0), nonpositive curvature}

\end{document}